\newenvironment{proof*}{\paragraph{Proof}}{\hfill$\blacksquare$}
\newtheorem{remark}{Remark}
\newtheorem{theorem}{Theorem}[section]
\newtheorem{proposition}[theorem]{Proposition}
\newtheorem{definition}[theorem]{Definition}
\newtheorem{example}[theorem]{Example}
\newtheorem{exe}{Assumption}
\newcommand{\norm}[1]{\left\lVert#1\right\rVert}
\newcommand{\bk}{\color{black}}
\begin{document}

\begin{frontmatter}
\title{Subsampling Sparse Graphons Under Minimal Assumptions}
\runtitle{}

\begin{aug}
\author{\fnms{Robert} \snm{Lunde}\ead[label=e1]{rlunde@utexas.edu}}
\and
\author{\fnms{Purnamrita Sarkar}\ead[label=e2]{purna.sarkar@austin.utexas.edu}}
\affiliation{The University of Texas at Austin}

\runauthor{R.Lunde and P.Sarkar}
\runtitle{Subsampling Sparse Graphons}



\address{Department of Statistics and Data Science\\
University of Texas at Austin}

\end{aug}

\begin{abstract}
We establish a general theory for subsampling network data generated by the sparse graphon model.  In contrast to previous work for networks, we demonstrate validity under minimal assumptions; the main requirement is weak convergence of the functional of interest. We study the properties of two procedures: vertex subsampling and $p$-subsampling. For the first, we prove validity under the mild condition that the number of subsampled vertices is $o(n)$. For the second, we establish validity under analogous conditions on the expected subsample size.  For both procedures, we also establish conditions under which uniform validity holds. Furthermore, under appropriate sparsity conditions, we derive limiting distributions for the nonzero eigenvalues of the adjacency matrix of a low rank sparse graphon.  Our weak convergence result immediately yields the validity of subsampling for the nonzero eigenvalues under suitable assumptions. 


\end{abstract}


\begin{keyword}
\kwd{networks}
\kwd{sparse graphon}
\kwd{subsampling}
\kwd{eigenvalues}
\kwd{weak convergence}
\end{keyword}

\end{frontmatter}

\section{Introduction}
\label{Introduction}
The analysis of network data has quickly become one of the most active research areas in statistics.  Many results~\citep{abbe-community-detection-sbm-developments} are now known about canonical network models such as the Stochastic Block Model and its many variants \citep{holland-sbm,airoldi-mmsb,karrer-newman-dcsbm}, the (Generalized) Random Dot Product Model \citep{young-schneiderman-rdpg,generalized-rdpg}, and the Latent Space Model \citep{hoff-raftery-handcock-latent-space-model}, among others.  For the recent developments on minimax rates of nonparametric estimation in the sparse graphon model (defined in Section~\ref{graphons-sparse-graphons}) \citep{gao-minimax-graphon-estimation,klopp-oracle-inequalities}, see~\citet{gao-ma-minimax-network-analysis}. \bk  However, the problem of statistical inference for common network statistics, particularly in the nonparametric setting, has been less studied.  One exception is count statistics, which has been extensively studied by \citet{Bhattacharyya-subsample-count-features}, where the validity of two subsampling schemes for normalized count functionals of sparse graphons is established.

Subsampling is a general methodology that has been shown to exhibit first-order correctness under minimal assumptions for a wide range of data generating processes; for an overview, see \citet{Politis-Romano-Wolf-subsampling}.  While count statistics are an important class of functionals in network analysis, we will show that a more general theory is possible for sparse graphons.  Before discussing our results in more detail, we will introduce the network model that we consider below.       
      
\subsection{Graphons and Sparse Graphons}
\label{graphons-sparse-graphons}
Let $G:=\{G_n\}_{n \in \mathbb{N}}$ denote a sequence of random undirected graphs, and let $\{V(G_n)\}_{n \in \mathbb{N}}$ and $\{E(G_n) \}_{n \in \mathbb{N}}$ denote the corresponding sequence of vertices and edges, respectively.  We will assume that $|V(G_n)| = n$ and that the adjacency matrix corresponding to $G_n$, denoted $A^{(n)}$, is generated by the following model:
\begin{align}
\label{bernoulli-model}
A_{ij}^{(n)} = A_{ji}^{(n)} = \mathbbm{1}( \eta_{ij} \leq h_n(\xi_i, \xi_j))  \stackrel{d}{=} \mathrm{Bernoulli}(h_n(\xi_i, \xi_j))
\end{align}
where $h_n:[0,1]^2 \mapsto [0,1]$ is a symmetric measurable function and $\xi_i \sim \mathrm{Uniform}[0,1]$ for $1 \leq i \leq n$ and  $ \eta_{ij} \sim \mathrm{Uniform}[0,1]$ for $1 \leq i < j \leq n$. We will assume that $A_{ii}^{(n)} = 0$.  For notational convenience, we will drop dependence on $n$ when appropriate.  Formally, we require that $\{\xi_i\}_{i=1}^\infty$ and $\{\eta_{ij} \}_{(i,j) \in \mathbb{N} \times \mathbb{N}}$ are defined on the common probability space $(\Omega, \mathcal{F}, P)$.

When $h_n$ is fixed for all $n$, this model corresponds to the notion of a graphon. Graphons are natural models for graphs that exhibit vertex exchangeability.  The theorems of \citet{aldous-representation-array} and \citet{hoover-exchangeability} imply that any binary jointly exchangeable infinite array (here the adjacency matrix) may be represented as a mixture of processes for which the data generating process is given above, with $h_n$ fixed for all $n$.  For modeling purposes, it is common to fix one component of the mixture, and assume that a size $n$ graph is a partial observation from an infinite array.  Alternatively, graphons arise as limits of convergent graph sequences, where ``convergence" may be defined by one of several equivalent notions (see \citet{Lovacz-Graph-Limits}).

Graphons are known to generate either empty or dense graphs; in the latter case, the expected number of edges is $O(n^2)$.  However, many real-world networks are known to have expected number of edges given by $o(n^2)$; therefore having $h_n$ fixed as $n \rightarrow \infty$ is often inappropriate. Instead, following \citet{Bickel-Chen-on-modularity}, we will consider the following parametrization.  Let:
\begin{align*}
\rho_n = P(A_{ij} = 1) = \int_0^1 \int_0^1 h_n(u,v) \ du \ dv 
\end{align*}

It follows that we may express $h_n(u,v)$ as:
\begin{align*}
h_n(u,v) = \rho_n w_n(u,v)
\end{align*}
where $w_n(u,v): [0,1]^2 \mapsto \mathbb{R}$ is the conditional density of $(\xi_i, \xi_j)$ given $A_{ij}^{(n)} = 1$.  With this parametrization, it is natural to keep $w_n(u,v)$ fixed and let $\rho_n$ vary with n.  Doing so, we arrive at the following model:
\begin{align}
\label{sparsified-graphon}
h_n(u,v) = \rho_n w(u,v) \wedge 1
\end{align}
where $w(u,v) = w(v,u)$, $w(u,v) \geq 0$ and $\int_0^1 \int_0^1 w(u,v) \ du \ dv =1$. By letting $\rho_n \rightarrow 0$ at an appropriate rate, we may generate an appropriately sparse sequence of graphs. With this generalization, note that $w(u,v)$ and $\rho_n$ may lose their original interpretation for any finite $n$.  One may alternatively arrive at this model by considering $L^p$ graphons of \citet{borgs-lp-part-one}, where $w$ is an  element of $L^p([0,1]^2)$ instead of $L^\infty([0,1]^2)$. As noted in the above reference, an unbounded graphon allows power law degree distributions and allows sparse graphs to contain dense spots.   

Notable alternative frameworks for sparse network models include the graphon process or graphex \citep{veitch-roy-graphex,borgs-graphon-process}. These models are based on a different notion of exchangeability. While we focus on sparse graphons only, we will consider a subsampling procedure based on a natural sampling mechanism for graphexes in Section~\ref{p-subsampling-validity}.  See \citet{orbanz-subsampling-large-graphs} for further discussion on natural sampling mechanisms associated with various network models.

\subsection{Overview of Main Results}
 \label{results-overview}
 One of our main contributions is that we establish a general theory of subsampling for network-structured data.  At a high level, subsampling often works for many data generating processes because a size $b$ subsample can itself be viewed as a size $b$ sample from the population.  If the size $b$ subsamples are not too dependent, then they can be aggregated to form a ``good" empirical estimate of the size $b$ sampling distribution.  Furthermore, if the functional of interest converges in distribution, then the sampling distributions of the size $b$ subsamples and the size $n$ sample should be close asymptotically as $b$ grows with $n$.  Let $G_{n,b}$ denote an induced subgraph formed from $b$ vertices of $G_n$.  When $h_n$ is fixed, it is clear that $G_{n,b}$ follows the same distribution as $G_b$.  Therefore, if the induced subgraphs are weakly dependent, then intuitively, subsampling should be valid for graphons under very general conditions.  We will show that this is indeed the case in Section \ref{vertex-subsampling-validity}.

For sparse graphons, observe that $G_{n,b}$ generally does not follow the same distribution as $G_b$.  It will typically be the case that the induced subgraph is sparser, since typically $ \rho_b > \rho_n$.  However, it turns out that, for many functionals, the exact sparsity sequence does not affect the limiting distribution so long as the graph sequence is not too sparse.  Our proof hinges on a novel Hoeffding representation, which allows us to sidestep network dependence.   

In Section \ref{p-subsampling-validity}, we consider a subsampling scheme based on p-sampling. The $p$-sampling procedure, introduced by \citet{Veitch-Roy-Sampling-Graphs}, involves sampling each vertex with probability $p$ and taking the induced subgraph, with isolated nodes removed. See Definition \ref{p-sample-definition} for a formal definition.   We show that $p$-subsampling is valid for sparse graphons under slightly stronger regularity conditions \bk than those for vertex subsampling.

While the conditions in our subsampling theorems are comparable to those in the i.i.d data setting, a question remains as to the applicability of our results; namely, what network functionals converge in distribution?  Prior work by \citet{Bickel-Chen-Levina-method-of-moments} establishes asymptotic normality of certain count functionals and subsampling validity was established for these functionals by \citet{Bhattacharyya-subsample-count-features}.  In general, establishing weak convergence results for networks can be quite challenging, particularly if one considers a sufficiently large class of models.  In Section \ref{eigenvalue-limit}, we derive a limiting joint distribution for the nonzero eigenvalues of a low rank sparse graphon. Section~\ref{uniform_validity} has theoretical results on uniform validity. 

Finally, in Section \ref{simulation-study}, we complement our theoretical analysis with a simulation study. Our simulations suggest that subsampling has satisfying finite-sample properties for the maximum eigenvalue, but coverage for the non-maximal eigenvalues may be poor for sparse graphs.  We also work with a real Facebook social network dataset, where we consider the problem of two-sample testing based on subsampled eigenvalues.  Proofs for results in Sections~\ref{general-subsampling-theorems} and~\ref{eigenvalue-limit} can be found in the Appendix; the other proofs are deferred to the Supplement due to space considerations.


\subsection{Related Work on Inference for Network Data}
Aside from the aforementioned work by \citet{Bhattacharyya-subsample-count-features}, there has been some work involving subsampling/resampling networks in the statistics literature.  For example, \citet{reinert-netdis} develop a subsampling method named \textit{Netdis}, which consists of sampling nodes and forming two-step ego-networks for these nodes. In addition, we have recently become aware of the work of \citet{levin-levina-bootstrap-network-structure}, who consider the problem of bootstrapping functionals that are expressible as U-statistics for random dot product graphs.  The procedures they consider involve estimating the latent positions first and bootstrapping the associated U-statistics.  \citet{green-shalizi-network-bootstrap} also propose two bootstrap procedures for conducting inference for count functionals, one based on an ``empirical graphon'' and the other based on a sieve procedure.

In the computer science literature, various subsampling approaches have also been studied for approximate subgraph counting.  In contrast to \citet{Bhattacharyya-subsample-count-features}, who consider inference for graphon parameters, the aim here is to approximate subgraph counts in $G_n$ up to a multiplicative constant in a computationally efficient manner.  Recently, a literature on sublinear algorithms has emerged, where many of the proposed procedures are based on edge sampling; see \cite{SUBG:Feige-average-degree,SUBG:Goldreich-Ron-Approximating-Average-Parameters,SUBG:assadi-sublinear-arbitrary-subgraph,SUBG:eden-sublinear-triangles,SUBG:gonen-sublinear-stars} and references therein.  Approximate subgraph counting has also received significant attention in various streaming settings; see for example, \cite{Streaming:Reduction-streaming-algorithms, Streaming:doulion,Streaming:kane-arbitrary-subgraphs, Streaming:mcgregor-better-triangles, Streaming:bera-tighter-bounds, Kallaugher:hybrid-triangle, Kallaugher:2019:CCC:3294052.3319706}.   

For the general problem of nonparametric inference for network data, some work has started to emerge involving hypothesis testing. \citet{two-sample-test-network-statistics} develop a framework for nonparametric two-sample testing based on network statistics that satisfy a certain concentration property.  Among the network statistics considered are eigenvalues of independent edge random graphs.  \citet{tang-nonparametric-two-sample-test} also consider nonparametric two-sample testing, but in a setting where the networks are generated by a random dot product graph.  Their test involves estimating the latent positions of the network and using the kernel MMD \citep{kernel-two-sample-test} to test whether the latent positions are generated by the same distribution.


 \section{General Theorems for Subsampling}
 \label{general-subsampling-theorems}
 
 \subsection{Notation}
 Let $\hat{\theta}_n(G_n): \{0,1\}^{n \times n} \mapsto \mathbb{R}$ be an estimator of $\theta(G_\infty)$, where $\theta(G_\infty)$ is some parameter such that $\hat{\theta}_n(G_n) -\theta(G_\infty) \xrightarrow{P} 0$.  We will assume $\hat{\theta}_n(G_n)$ is invariant to permutations of the vertices. The distribution function of interest is given by:
\begin{align}
J_n(t,P_n) = P\left(\tau_n[\hat{\theta}_n(G_n) - \theta(G_\infty)] \leq t \right)
\end{align}
As discussed in Section \ref{results-overview}, for sparse graphons, it will often be the case that the induced subgraph of size $b$, denoted $G_{n,b}$, will be sparser than $G_b$. For each $n \geq 2$ and $2 \leq b_n \leq n$, let $\hat{\theta}_{n,b} : \{0,1 \}^{b \times b} \mapsto \mathbb{R}$ be an estimator defined on the induced subgraph; for reasons that will be explained shortly, $\hat{\theta}_{n,b}$ need not be equal to $\hat{\theta}_{b}$, but will be closely related.  Define the following CDF corresponding to the functional defined on an induced subgraph: 
\begin{align}
J_{n,b}(t, P_n) = P\left(\tau_b[\hat{\theta}_{n,b}(G_{n,b}) - \theta(G_\infty)] \leq t \right) 
\end{align}
We will impose the following condition: 
\begin{exe}
\label{convergence-of-CDF}
For a given sequence $\{ b_n \}_{n \in \mathbb{N}}$ satisfying $b_n \rightarrow \infty$, there exists some non-degenerate limiting distribution $J(t,P_\infty)$, such that, for all continuity points of $J(t,P_\infty)$:
\begin{align*}
|J_n(t, P_n)- J(t, P_\infty)| \rightarrow 0 \  \text{ and } \ | J_{n,b}(t, P_{n}) - J(t, P_\infty) | \rightarrow 0 
\end{align*} 
\end{exe}
%
The network statistics considered will often be normalized by some power of $\rho_n$. Assumption \ref{convergence-of-CDF} requires that the functional converges in distribution even when a size $b$ graph is normalized by $\rho_n$ instead of $\rho_b$. We will provide a concrete example below.  
\begin{example}[Distributions of interest for leading eigenvalue]
\label{eigenvalue-example}
Let $\lambda_1(w)$ denote the parameter of interest, which turns out to only depend on $w$ for graph sequences that are not too sparse; see Section \ref{eigenvalue-limit} for details.  Let $A^{(n,b)}$ denote the adjacency matrix corresponding to a subgraph induced by $b$ nodes.  The corresponding CDFs of interest may be expressed as:
\begin{align*}
\begin{split}
J_n(t, P_n) &= P( \sqrt{n}[\lambda_1(A^{(n)}/n\rho_n) - \lambda_1(w)] \leq t )
\\ J_b(t, P_b) &= P( \sqrt{b}[\lambda_1(A^{(b)}/b\rho_b) - \lambda_1(w)] \leq t )
\\ J_{n,b}(t, P_n) &= P( \sqrt{b}[\lambda_1(A^{(n,b)}/b\rho_n) - \lambda_1(w)] \leq t )
\end{split}
\end{align*}
\end{example}

 Assumption \ref{convergence-of-CDF} imposes implicit limitations on the sparsity level; for count functionals one would typically need that $n\rho_n \rightarrow \infty$, whereas for eigenvalue statistics, $\rho_n = \omega(1/\sqrt{n})$ is required. Even if subsampling is valid for certain sequences, this condition will also impose an implicit restriction on how slowly $b_n$ can grow.  Furthermore, in our definition of $J_n(t, P_n)$, note that $\rho_n$ is unknown.  For ease of exposition, we will not introduce estimation of $\rho_n$ in our theorems below, but in Section \ref{subsampling-eigenvalues} we will show that subsampling may still be used to approximate $J_n(t, P_n)$ when $\rho_n$ is estimated.

Since $J_n(t,P_n)$ is inaccessible, we will approximate it with an empirical quantity defined on subsamples of the data.  Our first subsampling scheme computes the functional of interest on each induced subgraph with $b$ vertices. \citet{Bhattacharyya-subsample-count-features}) referred to this procedure as uniform subsampling; in the present work, we use the terminology vertex subsampling to avoid confusion with the notion of uniform validity. Let $N_n = {n \choose b_n}$; we will drop the subscripts when there is no ambiguity. For a given $n$, let $ S_{b,1}, S_{b,2}, \ldots S_{b, N}$ denote subsets of size $b$ constructed from $\{1, \ldots, n\}$, arranged in any order.  Furthermore, let $G_{n,b,i}$ denote the graph induced by the nodes in $S_{b,i}$.  The resulting empirical CDF may be expressed as follows:      
\begin{align}
L_{n,b}(t,P_n) = \frac{1}{N}\sum_{i=1}^{N} \mathbbm{1}\left( \tau_b[\hat{\theta}_{n,b}(G_{n,b,i}) - \hat{\theta}_n(G_n)] \leq t \right)
\end{align}
Since $\theta(G_\infty)$ is unobservable, it is customary to replace it with its empirical counterpart estimated on $G_n$.  As long as $b_n = o(n)$, this substitution is asymptotically negligible.

%
%

 \subsection{Validity of Vertex Subsampling for Sparse Graphons}
 \label{vertex-subsampling-validity}
 In the theorem below, we establish the validity of vertex subsampling for sparse graphons under minimal conditions.  As mentioned in Section \ref{results-overview}, our proof hinges on a technique involving the independence of induced subgraphs when the node sets are disjoint.  To make the similarites to the theory for i.i.d. processes transparent, we have stated the theorem below in a manner analogous to Theorem 2.2.1 of  \citet{Politis-Romano-Wolf-subsampling}.  A proof is given in Appendix \ref{section:appendix-vertex-subsampling-proof}.       
\begin{theorem}
\label{subsampling-fixed-b}
Assume $\tau_b/\tau_n \rightarrow 0$, $b_n \rightarrow \infty$, and $ b_n = o(n)$.  Further suppose that for the sequence $\{b_n\}_{n \in \mathbb{N}}$, Assumption \ref{convergence-of-CDF} is satisfied.   Then, 
\begin{enumerate}
\item[\bf{i}.] If $t$ is a continuity point of $J(\cdot, P_\infty)$, then $L_{n,b}(t,P_n) \xrightarrow{P} J(t,P_\infty)$.
\item[\bf{ii}.] If $J(\cdot, P_\infty)$ is continuous, then,
\begin{align*}
\sup_{t \in \mathbb{R}} \left|L_{n,b}(t,P_n) - J_n(t, P_n) \right| \xrightarrow{P} 0 
\end{align*} 
\item[\bf{iii}.] Let $c_{n,b}(1-\alpha) = \inf \{ \ t \in \mathbb{R} \ | \ L_{n,b}(t,P_n) \geq 1-\alpha \}$. 

Correspondingly define:
\begin{align*}
c(1-\alpha) = \inf \{ \ t \in \mathbb{R} \ | \ J(t,P_\infty) \geq 1-\alpha \}
\end{align*}
If $J(\cdot, P_\infty)$ is continuous at $c(1-\alpha)$ then,
\begin{align*}
P\left(\tau_n[\hat{\theta}_n(G_n) - \theta(G_\infty)] \leq c_{n,b}(1-\alpha) \right) \rightarrow 1-\alpha \ \text{ as } \ n \rightarrow \infty
\end{align*}
\end{enumerate} 
\end{theorem}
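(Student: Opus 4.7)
The plan is to transpose the scaffolding of Theorem 2.2.1 of Politis-Romano-Wolf, identifying the single place where the network structure enters. First I would introduce the infeasible empirical CDF
\begin{align*}
U_{n,b}(t) = \frac{1}{N} \sum_{i=1}^{N} \mathbbm{1}\bigl(\tau_b[\hat{\theta}_{n,b}(G_{n,b,i}) - \theta(G_\infty)] \leq t\bigr),
\end{align*}
whose mean is exactly $J_{n,b}(t,P_n)$, which converges to $J(t,P_\infty)$ by Assumption \ref{convergence-of-CDF}. Writing $L_{n,b}(t) = U_{n,b}(t + \Delta_n)$ for $\Delta_n = \tau_b[\hat{\theta}_n(G_n) - \theta(G_\infty)]$, Assumption \ref{convergence-of-CDF} forces $\tau_n[\hat{\theta}_n(G_n) - \theta(G_\infty)] = O_P(1)$ while $\tau_b/\tau_n \to 0$ forces $\Delta_n \xrightarrow{P} 0$; monotonicity of $U_{n,b}$ then sandwiches $U_{n,b}(t-\varepsilon) \leq L_{n,b}(t) \leq U_{n,b}(t+\varepsilon)$ on $\{|\Delta_n| \leq \varepsilon\}$. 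Part (i) is thereby reduced to showing $U_{n,b}(s) \xrightarrow{P} J(s,P_\infty)$ at continuity points $s$ of $J$.

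Given the mean convergence, it suffices to control $\mathrm{Var}(U_{n,b}(s))$. This is where the network structure enters: because $\hat{\theta}_{n,b}(G_{n,b,i})$ is measurable with respect to the latents $\{\xi_k : k \in S_{b,i}\}$ and the edge noise $\{\eta_{k\ell} : k,\ell \in S_{b,i}\}$, any two summands indexed by disjoint vertex subsets are stochastically independent---this is the only place where the Bernoulli latent-variable representation (\ref{bernoulli-model}) is invoked. Hoeffding's blocking trick then yields the classical U-statistic variance bound: partition $\{1,\ldots,n\}$ into $\lfloor n/b \rfloor$ disjoint size-$b$ blocks, express $U_{n,b}(s)$ as the symmetrization over a uniform random permutation of an average of $\lfloor n/b \rfloor$ indicators on those (independent) blocks, and apply the variance inequality for conditional expectations to obtain
\begin{align*}
\mathrm{Var}(U_{n,b}(s)) \leq \frac{1}{4\lfloor n/b \rfloor} = O(b/n) \longrightarrow 0.
\end{align*}
Combined with the mean convergence, this gives $U_{n,b}(s) \xrightarrow{L^2} J(s,P_\infty)$ and completes part (i).

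Part (ii) is a Polya-style upgrade: a (possibly random) sequence of monotone CDFs converging pointwise on a dense set of continuity points to a continuous deterministic limit converges uniformly, applied both to $L_{n,b} \Rightarrow J$ (from (i)) and to $J_n \Rightarrow J$ (directly from Assumption \ref{convergence-of-CDF} and continuity of $J$). Part (iii) then follows by quantile inversion: continuity of $J$ at $c(1-\alpha)$ together with the pointwise convergence of $L_{n,b}$ at nearby continuity points yields $c_{n,b}(1-\alpha) \xrightarrow{P} c(1-\alpha)$, and combining this with the uniform convergence of $J_n$ to $J$ produces $P(\tau_n[\hat{\theta}_n(G_n) - \theta(G_\infty)] \leq c_{n,b}(1-\alpha)) = J_n(c_{n,b}(1-\alpha),P_n) \to J(c(1-\alpha),P_\infty) = 1-\alpha$.

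The main obstacle is the variance bound in the second paragraph; in the i.i.d.\ setting it is immediate, whereas here one must verify that the latent-plus-noise representation (\ref{bernoulli-model}) decouples into independent pieces on disjoint vertex subsets. Once this structural fact is in hand, the remainder is a faithful transcription of the i.i.d.\ subsampling proof, with the conditions $b_n \to \infty$, $b_n = o(n)$, and $\tau_b/\tau_n \to 0$ playing the same roles as their classical analogues.
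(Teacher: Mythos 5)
Your proposal is correct and follows essentially the same route as the paper: you reduce $L_{n,b}$ to the centered statistic $U_{n,b}$ via the $\tau_b/\tau_n \to 0$ sandwich, invoke the independence of induced subgraphs on disjoint vertex sets (exactly the structural fact the paper uses), and apply the Hoeffding blocking/symmetrization representation, with parts \textbf{ii}--\textbf{iii} handled by the standard Politis--Romano--Wolf arguments just as the paper does. The only difference is cosmetic: you close the concentration step with a Chebyshev bound $\mathrm{Var}(U_{n,b}) = O(b/n)$, whereas the paper runs the same representation through a Hoeffding-type exponential inequality; both suffice since only convergence in probability is needed.
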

\begin{remark}
The above result may be generalized in several ways.  The latent positions may instead follow any distribution.  In addition, the above theorem holds for  weighted/directed graphs as well as graphon models with  nodal covariates, so long as the pair $(\xi_i, X_i) \in \mathbb{R}^{p}$ is i.i.d.
\end{remark}
We would like to mention that several auxiliary results in \citet{Politis-Romano-Wolf-subsampling} may be proved following similar reasoning.  However, certain results, such as the validity of subsampling with a data-driven subsample size (Theorem 2.7.1 \citep{Politis-Romano-Wolf-subsampling}) require slightly stronger assumptions since $J_{n,b}(t,P_n)$ need not be equal to $J_b(t,P_n)$.  We will not pursue this here, but we would like to mention that a similar issue arises in Section \ref{p-subsampling-validity}; granting Assumption \ref{convergence-of-CDF-2} should be enough to extend the aforementioned result.

\subsection{Validity of p-subsampling for Sparse Graphons}
\label{p-subsampling-validity}
We will now consider the validity of a subsampling procedure that involves repeatedly $p$-sampling a given graph.  The notion of $p$-sampling was introduced by \citet{Veitch-Roy-Sampling-Graphs} in the context of sampling for graphex processes. This procedure is described below.    
\begin{definition}[p-sampling]
\label{p-sample-definition}
A $p$-sample of $G$, denoted $\mathrm{Smpl}_{p}(G)$, is a random subgraph obtained by including vertices independently with probability $p$, and taking the induced subgraph, with isolated vertices removed.     
\end{definition}

Given a size $n$ realization of a graphex, a $p$-sampling of the graph generates a smaller graphex; therefore, $p$-sampling may be viewed as the natural sampling mechanism for this process.  Under weak assumptions on the (expected) subsample size, we will demonstrate that a subsampling procedure based on $p$-sampling works for sparse graphons.  To our knowledge, our result is the first subsampling validity result for a procedure with a random subsample size on each iteration.  We will begin by preparing some notation. 

Since the subsample size is random, we will now denote it with $B_i$.  Let $X_{ij}^{(n)} \sim \mathrm{Bernoulli}(p_n)$; this random variable indicates whether the $j$th node is included in the subsampled graph before the deletion of isolated vertices.  We will assume the Bernoulli trials are generated independently from $G_n$. Denote the $i$th p-sample $\mathrm{Smpl}_{p,i}(G_n)$. Furthermore, let $M$ denote the number of $p$-subsamples.  In addition, we will modify Assumption \ref{convergence-of-CDF} to accommodate the random sample size. Consider the following condition:
\begin{exe}
\label{convergence-of-CDF-2}
For sequences $\{ l_n \}_{n \in \mathbb{N}}$ and $\{ u_n \}_{n \in \mathbb{N}}$ satisfying $l_n < u_n$ and $l_n \rightarrow \infty$, there exists some non-degenerate limiting distribution $J(t,P_\infty)$, such that, for all continuity points of $J(t,P_\infty)$:
\begin{align*}
|J_n(t, P_n) - J(t,P_\infty)| \rightarrow 0 \  \text{ and } \ \sup_{l_n \leq j \leq u_n}| J_{n,j}(t,P_n) - J(t,P_\infty) | \rightarrow 0
\end{align*}
\end{exe}
Now, let:
\begin{align}
\label{minimum-subsample-size}
l_n = \lfloor np_n -3\sqrt{np_n \log n} \rfloor, \ \ \ u_n = \lceil np_n + 3\sqrt{np_n \log n} \rceil
\end{align}
Recall the following Chernoff bound for binomial random variables, which yields, for $ 0 < \epsilon < np_n$:
\begin{align}
\label{chernoff-binomial-bound}
P\left(\biggl|\sum_{j=1}^n X_{ij}^{(n)} - \mathbb{E}[X_{ij}^{(n)}] \biggr| > \epsilon  \right) \leq  2\exp\left(\frac{-\epsilon^2}{3 np_n} \right) 
\end{align}
In light of Assumption \ref{convergence-of-CDF-2} and the Chernoff bound, our choice of $l_n$ and $u_n$ ensures that the sequence with a random sample size converges in distribution with high probability. 
Since the number of vertices of the input graph is random, we will have to be slightly careful in formally defining $\hat{\theta}_{n,B_i}(\cdot)$.  Let $\hat{\theta}_{n,B_i}(\cdot) : \cup_{b=0}^n \{0,1\}^{b \times b } \mapsto \mathbb{R}$ be equal to $\hat{\theta}_{n,j}(\cdot)$ when $|V(\mathrm{Smpl}_{p,i}(G_n)| = j$ for $j \geq 2$ and 0 otherwise.  Define the following functional, which denotes the empirical distribution function of the functional of interest following a $p$-subsampling procedure: 
\begin{align}
L_{n,B}^\prime(t,P_n) =  \frac{1}{M}\sum_{i=1}^{M} \mathbbm{1}\left( \tau_{B_i}[\hat{\theta}_{n,B_i}(\mathrm{Smpl}_{p,i}(G_n)) - \hat{\theta}_n(G_n)] \leq t \right)
\end{align}
     
We are now ready to state our result.  Our proof strategy involves approximating the empirical CDF of the $p$-subsampled functional with a convex combination of U-statistics, corresponding to its conditional expectation given $G_n$.  This approximation demonstrates that vertex sampling and $p$-sampling are closely linked, which is an interesting concept in its own right.  See Appendix \ref{section:appendix-p-subsampling-proof} for details. 

\begin{theorem}[Validity of $p-$subsampling for Sparse Graphons]
\label{p-subsampling-theorem}
 Assume that $\tau_n = n^{\alpha}$ for some $\alpha > 0$, $M \rightarrow \infty$, $np_n \rightarrow \infty$, and $p_n = o(1)$.  Further suppose for $l_n, u_n$ given in (\ref{minimum-subsample-size}), Assumption \ref{convergence-of-CDF-2} is satisfied and $(\rho_n, w(u,v))$ satisfy $\rho_n = \omega(\log l_n/l_n)$, and $w(\xi_i,\xi_j) \geq c$ almost surely, where $c > 0$.  Then, $\bf{i}. - \bf{iii}$. of Theorem \ref{subsampling-fixed-b} hold for $L_{n,B}^\prime(t,P_n)$.     
\end{theorem}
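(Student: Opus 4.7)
The plan is to reduce $p$-subsampling to a random mixture of vertex subsamplings of different sizes and then invoke (a uniform-in-$b$ strengthening of) Theorem \ref{subsampling-fixed-b}. Conditional on $G_n$, the indicators in $L_{n,B}^\prime(t,P_n)$ are i.i.d.\ Bernoullis across $i = 1, \ldots, M$, so with $M \to \infty$ Chebyshev's inequality gives
\begin{equation*}
L_{n,B}^\prime(t,P_n) - \bar L_{n,p}(t,P_n) \xrightarrow{P} 0, \qquad \bar L_{n,p}(t,P_n) := \E\bigl[\, \cI\{\tau_{B}[\hat\theta_{n,B}(\mathrm{Smpl}_{p}(G_n)) - \hat\theta_n(G_n)] \leq t\}\, \big|\, G_n \bigr],
\end{equation*}
so it suffices to analyze $\bar L_{n,p}$.

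Next I would decouple the two sources of randomness in $\mathrm{Smpl}_p(G_n)$: the binomial number $B^*$ of vertices selected by the Bernoulli trials, and the subsequent deletion of isolated vertices. The Chernoff bound (\ref{chernoff-binomial-bound}) with $\epsilon = 3\sqrt{np_n \log n}$ shows that $B^* \in [l_n, u_n]$ with probability $1 - o(1)$. Using $w(\xi_i,\xi_j) \geq c$ a.s., the probability that a given sampled vertex is isolated is at most $(1-c\rho_n)^{B^*-1} \leq \exp(-c\rho_n(B^* - 1))$, so a union bound yields
\begin{equation*}
P\bigl(\text{some vertex in } \mathrm{Smpl}_p(G_n) \text{ is deleted} \bigm| B^* \in [l_n, u_n]\bigr) \leq u_n \exp(-c\rho_n(l_n - 1)) \to 0,
\end{equation*}
since $\rho_n = \omega(\log l_n / l_n)$ and $u_n = O(l_n)$. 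Hence outside an event of probability $o(1)$ one has $B = B^*$ and $\mathrm{Smpl}_p(G_n) = G_n[\mathcal{V}]$, where $\mathcal{V}$ is the Bernoulli-selected vertex set. Conditional on $\{B^* = b\}$, $\mathcal{V}$ is uniform over size-$b$ subsets of $[n]$, so
\begin{equation*}
\bar L_{n,p}(t,P_n) = \sum_{b = l_n}^{u_n} P(B^* = b)\, L_{n,b}(t,P_n) + o_P(1).
\end{equation*}

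The hypotheses $\tau_n = n^{\alpha}$, $p_n = o(1)$, and $np_n \to \infty$ ensure $\tau_b/\tau_n \to 0$, $b \to \infty$, and $b = o(n)$ uniformly for $b \in [l_n, u_n]$, so the Hoeffding-representation argument underlying Theorem \ref{subsampling-fixed-b}.i applies to each $L_{n,b}$. Assumption \ref{convergence-of-CDF-2} supplies the uniform-in-$b$ bias control $\sup_{b \in [l_n,u_n]}|\E\, L_{n,b}(t,P_n) - J(t,P_\infty)| \to 0$, and combined with the $O(b/n)$ variance bound from the same Hoeffding expansion this yields $\sup_{b \in [l_n, u_n]} |L_{n,b}(t,P_n) - J(t,P_\infty)| \xrightarrow{P} 0$ at every continuity point $t$ of $J(\cdot, P_\infty)$. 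Plugging this into the previous display, the convex combination converges to $J(t,P_\infty)$, proving part (i); part (ii) then follows from P\'olya's theorem (pointwise convergence to a continuous CDF is automatically uniform), and part (iii) from quantile inversion at $1 - \alpha$, exactly as in the proof of Theorem \ref{subsampling-fixed-b}.

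The principal technical obstacle is upgrading the pointwise Theorem \ref{subsampling-fixed-b}.i-type convergence of $L_{n,b}(t,P_n)$ to hold uniformly over the band $b \in [l_n, u_n]$, since $u_n/l_n$ need not tend to $1$; this is precisely the role of the strengthened Assumption \ref{convergence-of-CDF-2}. A secondary issue is that on the event where isolated vertices are removed the random size $B$ differs from $B^*$, breaking the direct correspondence with vertex subsampling: the lower bound $w \geq c$ combined with $\rho_n = \omega(\log l_n / l_n)$ is exactly what makes this event asymptotically negligible, which is why these are the only additional hypotheses needed beyond those of Theorem \ref{subsampling-fixed-b}.
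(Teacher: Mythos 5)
Your argument follows the paper's own route quite closely: you replace the Monte Carlo average over the $M$ $p$-samples by its conditional expectation given $G_n$ (the paper uses Hoeffding, you use Chebyshev; both work since $M \to \infty$), truncate the binomial vertex count to $[l_n,u_n]$ via the Chernoff bound, dispose of isolated-vertex deletion using $w \geq c$ a.s.\ together with $\rho_n = \omega(\log l_n/l_n)$, rewrite what remains as a binomial mixture of vertex-subsampling CDFs $L_{n,b}$, and control the bias uniformly over $b \in [l_n,u_n]$ via Assumption \ref{convergence-of-CDF-2}. The handling of the random centering/normalization $\tau_{B_i}[\hat{\theta}_n(G_n)-\theta(G_\infty)]$ by appealing to $\tau_{u_n}/\tau_n = (u_n/n)^\alpha \to 0$ is also equivalent to the paper's $\chi_{n,i}$ argument.

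The one step that does not hold up as written is the claim that the per-size bias control plus the $O(b/n)$ variance (or the Hoeffding tail $\exp(-Cn\epsilon^2/b)$) yields $\sup_{l_n \leq b \leq u_n} |L_{n,b}(t,P_n) - J(t,P_\infty)| \xrightarrow{P} 0$. Per-size bounds give a sup statement only through a union bound over the roughly $6\sqrt{np_n\log n}$ admissible sizes, and under the theorem's hypotheses that union bound can fail: the hypotheses allow $p_n \to 0$ arbitrarily slowly (e.g.\ $p_n = 1/\log\log n$), in which case the per-size tail is only about $\exp(-C\epsilon^2/p_n) = (\log n)^{-C\epsilon^2}$ (and the Chebyshev bound only $O(p_n/\epsilon^2)$), which does not beat a polynomially growing number of sizes. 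Fortunately the sup is not what you need: only the mixture $\sum_{b \in [l_n,u_n]} P(B^*=b)\,L_{n,b}(t,P_n)$ must converge, and that follows from exactly the ingredients you already have, e.g.\ by Jensen, $\mathrm{Var}\bigl(\sum_b \tilde q_b U_{n,b}\bigr) \leq \sup_{b \leq u_n} \mathrm{Var}(U_{n,b}) = O(u_n/n) \to 0$, which is how the paper proceeds (it bounds the moment generating function of $\sum_j \tilde q_j (U_{n,j} - \mathbb{E}[U_{n,j}])$ via Jensen plus Hoeffding, obtaining a tail of order $\exp(-Cna^2/u_n)$). With the sup claim replaced by this direct treatment of the convex combination, your proof is complete and essentially identical to the paper's.
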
%




 \section{Weak Convergence of Nonzero Eigenvalues of Sparse Graphons}
 \label{eigenvalue-limit}
 In this section, we state a weak convergence result for eigenvalues of adjacency matrices generated by sparse graphons of finite rank.  Limit theorems are one of the most important topics in random matrix theory, so our result here is of sufficient independent interest. The main tools we use are results on random matrix approximations of integral operators due to \citet{koltchinksii-gine-kernel-operator} and refined eigenvalue perturbation bounds developed by \citet{eldridge-beyond-davis-kahan}.  To use the latter, an upper bound on the operator norm of the centered adjacency matrix is needed; however, sharp results (e.g. \citet{Vu-Spectral-Norm}) require independent entries. To handle the graphon dependence structure, we develop a technique that involves conditioning on the latent positions and using Egorov's Theorem to control the conditional probability uniformly on a high probability set.      

For Erd\H{o}s-R\`{e}nyi graphs, limiting distributions of the eigenvalues are well-known.  After appropriate centering, the leading eigenvalue of the adjacency matrix generated by Erd\H{o}s-R\`{e}nyi graphs has been shown to converge to Normal distribution at a $\sqrt{n}$-rate by \citet{furedi-komlos-eigenvalue-dist}. However, beyond this, weak convergence results were not previously known for more general classes of models.  

We would like to note that \citet{borgs-convergent-dense-graphs} show that the scaled eigenvalues of an adjacency matrix generated by a graphon converge to a limiting quantity.  Their result can be thought of as a law of large numbers for spectra; what we show is along the lines of a central limit theorem.  We will now introduce some concepts needed to state our result.

Let $w:[0,1]^2 \mapsto \mathbb{R}$ be a symmetric element of $L^2([0,1]^2)$.  Consider the following integral operator associated with $w$, which we will denote $T_w: L^2([0,1]) \mapsto L^2([0,1])$:
\begin{align}
\label{integral-operator}
T_w f = \int_0^1 w(u,v) f(v) dv
\end{align}
By Spectral Theorem, there exists an orthonormal collection of eigenfunctions $\{ \phi_r, \ r \in J\}$, where $J$ is either finite or countable, and a sequence of real numbers $\{\lambda_r, \ r \in J \}$ satisfying $\sum_{r \in J} \lambda_r^2 < \infty$ such that:
\begin{align}
w(u,v) = \sum_{r \in J} \lambda_r \phi_r(u) \phi_r(v)
\end{align}
in the $L^2([0,1]^2)$ sense.  We will consider the ordering $\lambda_1 \geq \lambda_2 \geq \ldots > 0 > \ldots \geq \lambda_{-2} \geq \lambda_{-1}$, where negative indices correspond to negative eigenvalues.  We will not consider zero eigenvalues. We will denote the eigenvalues associated with $w$ as $\lambda_r(w)$.
%

Let $\widetilde{A}^{(n)} = A^{(n)}/n\rho_n$.  In the theorem below, we will establish conditions under which:
\begin{align}
\label{eigenvalue-convergence}
Z_{n,r} := \sqrt{n}[\lambda_r(\widetilde{A}^{(n)}) - \lambda_r(w)]
\end{align}
weakly converges to a limiting distribution $Z_{\infty,r}$.  In fact, we may show stronger statements involving convergence of joint distributions. While we require that the kernel is finite rank, this assumption is, strictly speaking, not necessary for establishing marginal convergence; see Remark \ref{remark-graphon-infinite-rank} for details.  We will now state our result below under various assumptions on the pair $(\rho_n, w(u,v))$.A proof of this theorem is given in Appendix \ref{section:appendix-eigenvalue-limit-proof}.    

\begin{theorem}[Weak Convergence of Nonzero Eigenvalues]
\label{eigenvalue-weak-convergence}
 Let $\{G_n\}_{n \in \mathbb{N}}$ be a sequence of graphs generated by the model (\ref{sparsified-graphon}). Suppose $w(u,v)$ is an element of $L^2([0,1]^2)$ with an eigendecomposition of the form:
\begin{align*}
\label{low-rank-eigendecomposition}
\begin{split}
 w(u,v) = \sum_{r=1}^k \lambda_r \phi_r(u) \phi_r(v) 
\end{split}
\end{align*}
for some $k < \infty$ where $\lambda_r \neq 0$ for all $1 \leq r \leq k$, and one of the following conditions are satisfied:
 \begin{enumerate}[label=\Alph*]
\item \label{bounded-w} (Boundedness)  $ \norm{w(u,v)}_\infty < \infty$ almost surely, $\rho_n = o(1)$ and $\rho_n = \omega(1/\sqrt{n})$.
\item \label{subweibull-w} (sub-Weibull)  There exists $K_1 < \infty$ and $\gamma >0$ such that:
\begin{align}
\mathbb{E}\left[\exp\left(\frac{w(\xi_i, \xi_j)- \mathbb{E}[w(\xi_i, \xi_j)]}{K_1} \right)^\gamma \right] \leq 2
\end{align}    
Furthermore, for $1\leq r \leq k$, the eigenfunctions satisfy:
\begin{align}
\label{eigenfunction-condition}
\int_0^1 \phi_r^4(u) du < \infty
\end{align}
Further suppose that $\rho_n = O(n^{-\delta})$ and $\rho_n = \omega(n^{-1/2+\delta})$ for some $\delta>0$. 
\item \label{general-w} (Moment) Suppose that $\mathbb{E}[w^{s}(\xi_i, \xi_j)] < \infty$ for some $s > 3 + \sqrt{5}$.  Further suppose the eigenfunctions satisfy (\ref{eigenfunction-condition}) and $\rho_n$ satisfies $n^{1+\delta}\rho_{n}^{s-2} = o(1)$ and $\rho_n = \omega\left(n^{(2-s+\delta)/2s}\right)$ for some $\delta > 0$.
 \end{enumerate}
Let $Z_n := (Z_{n,1}, \ldots, Z_{n,k})$, where $Z_{n,r}$ are given by (\ref{eigenvalue-convergence}). Then, there exists some limiting random variable $Z_\infty$ such that:
\begin{align}
Z_n \rightsquigarrow Z_\infty
\end{align}
Furthermore, if $\lambda_1, \ldots, \lambda_k$ are distinct, then $Z_\infty$ is multivariate Gaussian.  
\end{theorem}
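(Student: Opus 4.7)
The plan is to write $\widetilde{A}^{(n)} = (n\rho_n)^{-1}A^{(n)}$ as the sum of a scaled kernel matrix $W_n/n$, where $(W_n)_{ij} = w(\xi_i,\xi_j)$ for $i \neq j$ (and $0$ on the diagonal), plus a noise matrix $E_n/(n\rho_n)$ with $E_{ij} = A_{ij} - \rho_n w(\xi_i,\xi_j)$. I would then decompose, for each $r$,
\begin{equation*}
\sqrt{n}\bigl[\lambda_r(\widetilde{A}^{(n)}) - \lambda_r(w)\bigr] = \sqrt{n}\bigl[\lambda_r(W_n/n) - \lambda_r(w)\bigr] + \sqrt{n}\bigl[\lambda_r(\widetilde{A}^{(n)}) - \lambda_r(W_n/n)\bigr]
\end{equation*}
and show that the first, ``kernel approximation'' term carries the weak limit while the second, ``noise'' term is $o_P(1)$.

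For the kernel approximation term, I would invoke the Koltchinskii--Gin\'e theory of random matrix approximations of integral operators. Since $w$ is a finite-rank symmetric kernel with eigenfunctions satisfying the fourth-moment condition (\ref{eigenfunction-condition}), the joint distribution of $\sqrt{n}(\lambda_r(W_n/n) - \lambda_r(w))_{r=1}^k$ converges to a well-defined limiting distribution $Z_\infty$, which is multivariate Gaussian when the $\lambda_r$'s are distinct (because in that case the perturbation series reduces to the quadratic form $\sqrt{n}\,\langle \phi_r, (W_n/n - T_w)\phi_r\rangle$, a $U$-statistic with a nondegenerate CLT). Under Condition~\ref{bounded-w}, $\|w\|_\infty < \infty$ makes (\ref{eigenfunction-condition}) automatic.

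For the noise term, the natural Weyl bound $|\lambda_r(\widetilde{A}^{(n)}) - \lambda_r(W_n/n)| \leq \|E_n\|_{\text{op}}/(n\rho_n) = O_P(1/\sqrt{n\rho_n})$ is too weak: multiplied by $\sqrt{n}$ it gives $1/\sqrt{\rho_n}$, which diverges. I would instead apply the Eldridge--Belkin--Wang refined perturbation bounds for low-rank signal matrices: for $M = W_n/n$ of rank $k$ with eigenvalue $\lambda_r(M)$ bounded away from zero and perturbation $H = E_n/(n\rho_n)$ with $\|H\|_{\text{op}}$ small relative to $|\lambda_r(M)|$,
\begin{equation*}
\lambda_r(M+H) - \lambda_r(M) = \langle \phi_r^{(n)}, H \phi_r^{(n)}\rangle + O\bigl(\|H\|_{\text{op}}^2/|\lambda_r(M)|\bigr),
\end{equation*}
where $\phi_r^{(n)}$ is the unit eigenvector of $M$. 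Given $\|E_n\|_{\text{op}} = O_P(\sqrt{n\rho_n})$, the remainder is $O_P(1/(n\rho_n)) = o_P(1/\sqrt{n})$ since $\rho_n = \omega(1/\sqrt{n})$. For the first-order term, conditional on $\xi = (\xi_1,\dots,\xi_n)$ the $E_{ij}$ are independent centered with $\mathrm{Var}(E_{ij}\mid\xi) \leq \rho_n w(\xi_i,\xi_j)$, and $\phi_r^{(n)}$ is close to the normalized evaluation vector $n^{-1/2}(\phi_r(\xi_i))_i$, so a direct conditional variance calculation yields $\langle \phi_r^{(n)}, H\phi_r^{(n)}\rangle = O_P(1/(n\sqrt{\rho_n}))$, again $o_P(1/\sqrt{n})$.

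The main obstacle is the operator-norm bound $\|E_n\|_{\text{op}} = O_P(\sqrt{n\rho_n})$, because sharp spectral-norm results for random matrices (e.g.\ Vu) require independent entries with uniformly bounded variances, whereas here the entrywise variance $\rho_n w(\xi_i,\xi_j)$ is latent-dependent and, under Conditions~\ref{subweibull-w} and \ref{general-w}, potentially unbounded. I would handle this by conditioning on $\xi$ and running the argument on a high-probability event. Fix $M>0$ and let $\Omega_n(M) = \{\max_{i,j}\rho_n w(\xi_i,\xi_j) \leq M\}$. Since $\rho_n \to 0$, for each fixed $M$ the sub-Weibull or moment hypotheses ensure $P(\Omega_n(M)) \to 1$ as $n \to \infty$ fast enough; Egorov's theorem lets us transfer the pointwise $\xi$-wise concentration to a uniform bound on a sequence of sets $\Omega_n$ with $P(\Omega_n) \to 1$. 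On $\Omega_n$, conditional on $\xi$, the entries of $E_n$ are independent, centered, bounded, with conditional variance sum $O(n^2 \rho_n)$, so a matrix Bernstein (or Latala-type) inequality yields the desired spectral-norm bound. The sparsity thresholds in Conditions~\ref{bounded-w}--\ref{general-w} are precisely what is needed to absorb the tail contribution outside $\Omega_n$. Joint convergence in $r = 1,\dots,k$ follows by stacking coordinates: $k$ is finite, the noise remainders are $o_P(1)$ uniformly in $r$, and the finite-dimensional CLT for the Koltchinskii--Gin\'e term gives the joint limit.
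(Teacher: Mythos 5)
Your overall architecture is the same as the paper's (Koltchinskii--Gin\'e for the limiting term, an Eldridge-type second-order eigenvalue expansion for the noise, conditioning on $\boldsymbol{\xi}_n$ plus Egorov to get uniform conditional control), but there is a genuine gap in how you treat the noise matrix under Condition \ref{general-w}, and a smaller one under Condition \ref{bounded-w}. First, your $E_{ij} = A_{ij} - \rho_n w(\xi_i,\xi_j)$ is not conditionally centered: the conditional mean is $\rho_n w(\xi_i,\xi_j)\wedge 1$, and the censoring discrepancy has to be controlled separately (the paper does this as a distinct term via Weyl plus a Frobenius/moment bound; this is exactly where the lower bounds on the decay of $\rho_n$ in \ref{subweibull-w} and \ref{general-w} enter). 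You propose to absorb it into the event $\Omega_n(M)=\{\max_{i,j}\rho_n w(\xi_i,\xi_j)\le M\}$ for fixed $M$, but under Condition \ref{general-w} this event need not have probability tending to one on the allowed range of $\rho_n$: take $w(u,v)=g(u)g(v)$ with $g$ having just over $s$ moments, so $\max_{i<j} w(\xi_i,\xi_j)\asymp n^{2/s}$, while the condition $n^{1+\delta}\rho_n^{s-2}=o(1)$ permits $\rho_n^{-1}\asymp n^{(1+\delta)/(s-2)}$, and $2/s>(1+\delta)/(s-2)$ precisely in the regime $s$ near $3+\sqrt5$. Consequently the bounded-entry matrix-Bernstein bound $\norm{E_n}_{op}=O_P(\sqrt{n\rho_n})$ you rely on is not available under \ref{general-w}. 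The paper instead truncates at a \emph{growing} level $M_n=n^{(2+\delta)/s}$ (so the event does have probability $1-n^{-\delta}$ by Markov), applies Vu's theorem with the variance proxy $M_n\rho_n^{-1}$, and obtains the polynomially worse bound $\norm{H}_{op}\lesssim n^{(2+s+\delta)/2s}/\sqrt{\rho_n}$; the condition $s>3+\sqrt5$ and the window on $\rho_n$ are exactly what make this weaker bound still give $\norm{H}_{op}^2/n = o(\sqrt n)$. Your sketch never uses these hypotheses, which is a sign it does not engage with the hard case.

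Second, even under Condition \ref{bounded-w}, a matrix-Bernstein (or Hoeffding-type) bound carries a $\sqrt{\log n}$ factor, so your remainder becomes $O_P(\log n/\rho_n)$ and you would need $\rho_n=\omega(\log n/\sqrt n)$ rather than the stated $\rho_n=\omega(1/\sqrt n)$. The paper removes the logarithms by (i) using Vu's spectral-norm theorem, which after conditioning on $\boldsymbol{\xi}_n$ only needs bounds on the maximal entry and maximal variance and gives $\norm{H}_{op}\le C\sqrt{n/\rho_n}$ with no log factor, and (ii) bounding the quadratic-form term $x^THx$ via the sharp sub-Gaussian norm of centered Bernoullis, $\norm{A_{ij}-P_{ij}}_{\psi_2}=\Theta(1/\sqrt{|\log\rho_n|})\cdot\rho_n^{-1}\cdot\rho_n$, which buys back exactly the log factor a Hoeffding bound would lose. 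Your first-order term analysis (conditional variance of $\langle \phi_r^{(n)},H\phi_r^{(n)}\rangle$ given $\boldsymbol{\xi}_n$) is fine in spirit, and your treatment of the Koltchinskii--Gin\'e term and of Condition \ref{subweibull-w} (where $\rho_n=O(n^{-\delta})$ does make a fixed-level boundedness event work up to polylog slack) is essentially correct, but as written the proposal does not prove the theorem under Conditions \ref{bounded-w} at the stated sparsity boundary or under \ref{general-w} at all.
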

\begin{remark}
\label{remark-graphon-infinite-rank}
Following Theorem 5.1 of \citet{koltchinksii-gine-kernel-operator}, one may impose the following condition for graphons that are not finite rank. For some $R_n \rightarrow \infty$, suppose that $w$ satisfies $\sum_{|r| > R_n} \lambda_r^2 = o(n^{-1})$. Further suppose that:
\begin{align*}
\sum_{|r| \leq R_n} \sum_{|s| \leq R_n} \int \phi_r^2 \phi_s^2 dP  \sum_{|r| \leq R_n} \sum_{|s| \leq R_n} (\lambda_r^2 +\lambda_s^2 )\int \phi_r^2 \phi_s^2 dP = o(n) 
\end{align*}
Moreover, suppose that $\sum_{r \in \mathbb{Z}} |\lambda_r| \phi_r^2 \in L^2(P)$. Then, one may show convergence of finite-dimensional distributions of the nonzero eigenvalues of interest. However, it seems that verifying these conditions is non-trivial outside of the finite-rank case.  For instance, it appears that smoothness properties by themselves do not imply the conditions above. 
\end{remark}
We will now discuss some of the conditions in our theorem.  As mentioned previously, we assume that the graphon has finite rank. Several important classes of graphon models fall within this framework.  For instance, stochastic block models and their variants, such as the mixed membership stochastic block model \citep{airoldi-mmsb} and the degree-corrected stochastic block model \citep{karrer-newman-dcsbm}, are finite rank. Subsuming these models are generalized random dot product graphs \citep{generalized-rdpg}, which are a very rich class of models, particularly when the dimension of the latent space is allowed to be any natural number.

In addition, while we require that graphon is rank $k$, the rank need not be known a priori.  For any $ k^\prime \leq k$, the theorem above provides joint convergence of the $k^\prime$ nonzero eigenvalues.  Furthermore, since we derive joint convergence, one may use the Delta Method to derive weak convergence for certain differentiable functions of the nonzero $k^\prime$ eigenvalues. We will provide examples of eigenvalue statistics that may be of interest below.

\begin{example}[Functionals Based on Disparities between Eigenvalues] For the purpose of comparing two networks, it may be of interest to consider certain functionals of two eigenvalues.  Two natural choices are spectral gaps and eigenvalue ratios, as defined below:
\begin{align}
\begin{split}
\hat{\theta}_{\mathrm{gap}}(A^{(n)}) &= \frac{\lambda_1(A^{(n)}) - \lambda_2(A^{(n)})}{n\rho_n} , \ \ \ \hat{\theta}_{\mathrm{ratio}}(G_n) = \frac{\lambda_1(A^{(n)})}{\lambda_{k^\prime}(A^{(n)})}
\end{split}
\end{align}
\end{example}  

\begin{example}[Approximate Trace] Another important functional in network analysis is $\mathrm{tr}(A^p)$.  This functional is closely related to subgraph counts; more precisely, it provides the number of closed walks of length $p$ from any vertex back to itself. It is well known that:
\begin{align*}
\begin{split}
\mathrm{tr}(A^p) &= \sum_{r=1}^n \lambda_r^p(A)
\end{split}
\end{align*}
Consider the following functional:
\begin{align}
\hat{\theta}_{\mathrm{trace},p,k^\prime}(A^{(n)}) = \ \sum_{r=1}^{k^\prime} \lambda_r^p\bigl(A^{(n)}/n\rho_n \bigr)
\end{align}    
When $k^\prime \approx k$, this functional is a suitable approximation to its population counterpart $\sum_{r=1}^k \lambda_r^p(w)$.           
\end{example}  

While our result here is sufficiently general, it should be mentioned that, even for bounded graphons, our theorem requires $\rho_n = \omega(1/\sqrt{n})$. In general, it seems difficult to improve the eigenvalue perturbation bounds to weaken conditions for concentration; \citet{orourke-perturbation-bounds} derive similar bounds under a low rank hypothesis for the mean matrix.  In addition, for sub-Weibull and $L^p$ graphons, we impose lower bounds on the rate of decay of $\rho_n$, which may seem unusual.  These lower bounds allow us to control the difference between eigenvalues of the mean matrix, with entries given by $\rho_n w(\xi_i, \xi_j) \wedge 1$, and a matrix with entries given by $\rho_n w(\xi_i, \xi_j)$.  In essence, if we allow $w$ to be unbounded, we need a sparser graph sequence to observe most of the uncensored values of $w(\xi_i,\xi_j)$.  
\subsection{On Subsampling Eigenvalues}
\label{subsampling-eigenvalues}
One practical limitation with the above result is that $\rho_n$ is assumed to be known; in practice, it will need to be estimated from data. A natural idea is to plug in the following estimator:
\begin{align}
\hat{\rho}_n = \frac{1}{n(n-1)}\sum_{i=1}^n \sum_{j=1}^n A_{ij}
\end{align}
Doing so, we arrive at the following quantity:
\begin{align}
\label{eigenvalue-with-estimated-rho}
\sqrt{n}[\lambda_r(A^{(n)}/n\hat{\rho}_n) - \lambda_r(w)]
\end{align}    

In Theorem 1 \citep{Bickel-Chen-Levina-method-of-moments}, it is shown that $\hat{\rho}_n/\rho_n$ converges to $1$ at a $\sqrt{n}$-rate; therefore, replacing $\rho_n$ with $\hat{\rho}_n$ turns out to be too large of a perturbation to be considered negligible.  One may often use the Delta Method to establish convergence with an estimated sparsity parameter, but this often leads a higher variance. The maximum eigenvalue appears to be an exception; a figure depicting this phenomenon is provided in Supplement \ref{appendix-max-eigenvalue}.

Fortunately, we may still approximate the sampling distribution of the nonzero eigenvalues through subsampling.  In essence, if we plug in $\hat{\rho}_n$ estimated on $G_n$ in each of the subsampled functionals, it will turn out that the estimate is accurate enough relative to the subsampled functionals so that it is asymptotically negligible.  For ease of exposition, we will demonstrate this result solely for vertex subsampling. Define $\hat{L}_{n,b}(t,P_n)$ as follows:
\begin{align}
\hat{L}_{n,b}(t,P_n) = \frac{1}{N}\sum_{i=1}^{N} \mathbbm{1}\left( \sqrt{b}\left[\frac{\lambda_r(A^{(n,b,i)})}{b\hat{\rho}_n} - \frac{\lambda_r(A^{(n)})}{n\hat{\rho}_n}\right]   \leq t \right) 
\end{align}
%
%
The sampling distribution of interest is given by:
\begin{align}
J_n(t, P_n) = P\left( \sqrt{n}\left[\frac{\lambda_r(A^{(n)})}{n\rho_n} - \lambda_r(w)\right]  \leq t \right)
\end{align}
We have the following result. See Appendix \ref{section:appendix-sparsity-subsampling-proof} for a proof.
 
\begin{proposition}[Subsampling Validity for Eigenvalues With Estimated Sparsity Parameter]
\label{estimated-sparsity-parameter}
 Let $\{G_n\}_{n \in \mathbb{N}}$ be a sequence of graphs generated by the model (\ref{sparsified-graphon}), with $w(u,v)$ having an eigendecomposition satisfying (\ref{low-rank-eigendecomposition}).  Moreover, suppose that one of Conditions \ref{bounded-w},\ref{subweibull-w}, \ref{general-w} of Theorem \ref{eigenvalue-weak-convergence} are satisfied for $\rho_{b_n}$, where $b_n = o(n)$ and $b_n \rightarrow \infty$. Further suppose that $t$ is a continuity point of $J(\cdot, P_\infty)$.  Then,
\begin{align*}
\hat{L}_{n,b}(t,P_n) \xrightarrow{P} J(t, P_\infty)
\end{align*}
\end{proposition}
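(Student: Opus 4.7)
The plan is to leverage the positive homogeneity of eigenvalues in $\rho_n$ to reduce the estimated-sparsity subsampling CDF to the known-sparsity case covered by Theorem \ref{subsampling-fixed-b}. Writing $T_i$ and $\hat{T}_i$ for the $i$th subsampled statistic with $\rho_n$ and $\hat{\rho}_n$ respectively, homogeneity of $\lambda_r$ in the matrix gives
\begin{align*}
\hat{T}_i \;=\; \sqrt{b}\left[\frac{\lambda_r(A^{(n,b,i)})}{b\hat{\rho}_n} - \frac{\lambda_r(A^{(n)})}{n\hat{\rho}_n}\right] \;=\; \frac{\rho_n}{\hat{\rho}_n}\, T_i,
\end{align*}
so on the high-probability event $\{\hat{\rho}_n > 0\}$,
\begin{align*}
\hat{L}_{n,b}(t, P_n) \;=\; L_{n,b}\bigl(t\, \hat{\rho}_n/\rho_n,\; P_n\bigr).
\end{align*}

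I would then assemble two ingredients. First, Theorem \ref{eigenvalue-weak-convergence} applied at sizes $n$ and $b_n$ (noting that the subsample of size $b_n$ is itself a sparse graphon with sparsity $\rho_n$) verifies Assumption \ref{convergence-of-CDF} for the eigenvalue functional under the stated sparsity conditions; Theorem \ref{subsampling-fixed-b}(i) then yields $L_{n,b}(s, P_n) \xrightarrow{P} J(s, P_\infty)$ at every continuity point $s$ of $J(\cdot, P_\infty)$. Second, by Theorem~1 of \citet{Bickel-Chen-Levina-method-of-moments}, $\hat{\rho}_n/\rho_n \xrightarrow{P} 1$ (each of Conditions \ref{bounded-w}, \ref{subweibull-w}, \ref{general-w} implies $n\rho_n \to \infty$). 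To combine them, for any $\eta > 0$ I would pick continuity points $s_- < t < s_+$ of $J(\cdot, P_\infty)$ with $|s_\pm - t| < \eta$, which exist since $J$ is monotone and hence has at most countably many discontinuities. With probability tending to one, $t\hat{\rho}_n/\rho_n \in [s_-, s_+]$, so monotonicity of $L_{n,b}(\cdot, P_n)$ gives
\begin{align*}
L_{n,b}(s_-, P_n) \;\leq\; \hat{L}_{n,b}(t, P_n) \;\leq\; L_{n,b}(s_+, P_n).
\end{align*}
Taking $\liminf$ and $\limsup$ in probability and then letting $\eta \to 0$, using continuity of $J$ at $t$, yields the conclusion.

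The crux is the algebraic identity rather than any delicate analytic work: the exact positive homogeneity of eigenvalue statistics in the matrix converts the plug-in error in $\hat{\rho}_n$ into a multiplicative rescaling of the threshold $t$ rather than an additive perturbation of each $\hat{T}_i$. Without this structural feature, one would have to control $\max_{i \leq \binom{n}{b_n}} |\hat{T}_i - T_i|$ uniformly over all subsamples, and the only deterministic bound $|T_i| \leq 2\sqrt{b}/\rho_n$ together with $\hat{\rho}_n/\rho_n - 1 = O_P(1/\sqrt{n})$ yields the inadequate estimate $\max_i|\hat{T}_i - T_i| = O_P(\sqrt{b/n}/\rho_n)$, which need not vanish under the bare conditions $\rho_n = \omega(1/\sqrt{n})$ and $b = o(n)$. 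This is the main reason the reduction, rather than a direct perturbation argument, is the right approach.
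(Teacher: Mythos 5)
Your proof is correct, and it rests on the same structural observation as the paper's: plugging in $\hat{\rho}_n$ multiplies the entire centered statistic by $\rho_n/\hat{\rho}_n$, so the error is a rescaling rather than an additive perturbation. The execution differs in a way worth noting. The paper re-centers each term at $\lambda_r(w)$, so its version of the identity carries an extra $\sqrt{b}$-scaled full-sample error term; it then controls the factor $\rho_n/\hat{\rho}_n$ and that error on the events $E_n^{(1)}$, $E_n^{(2)}$ and sandwiches $\hat{L}_{n,b}(t,P_n)$ between $U_{n,b}(\alpha,P_n)$ and $U_{n,b}(\beta,P_n)$ (the true-parameter-centered empirical CDF from the proof of Theorem \ref{subsampling-fixed-b}), with thresholds $\alpha,\beta$ that must be handled separately for $t>0$, $t<0$, $t=0$. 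You instead keep the centering at $\hat{\theta}_n(G_n)$, so the identity $\hat{L}_{n,b}(t,P_n)=L_{n,b}(t\hat{\rho}_n/\rho_n,P_n)$ is exact on $\{\hat{\rho}_n>0\}$, and the conclusion follows from Theorem \ref{subsampling-fixed-b}(i) at nearby continuity points, consistency $\hat{\rho}_n/\rho_n\xrightarrow{P}1$, and monotonicity of $L_{n,b}(\cdot,P_n)$ — no additive $\epsilon_2$ and no sign cases. Both routes need the same two external ingredients you identify: Assumption \ref{convergence-of-CDF} for the eigenvalue functional at sizes $n$ and $b_n$ (the induced subgraph of size $b_n$ having sparsity $\rho_n$), verified through Theorem \ref{eigenvalue-weak-convergence}, and the $\sqrt{n}$-consistency of $\hat{\rho}_n$. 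What your version buys is a cleaner argument that treats the known-$\rho_n$ subsampling theorem as a black box; what the paper's version buys is that it works directly with $U_{n,b}$, which is the object its other uniform-validity arguments are phrased in terms of. Either way the substance is the same, and your proposal has no gap.
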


\section{On the Uniform Validity of Subsampling for Sparse Graphons}
\label{uniform_validity}

\label{sec-uniform-coverage}
In the previous sections, we establish conditions under which subsampling is point-wise valid in the sense that, for any fixed $P$,
\begin{align}
\liminf_{n \rightarrow \infty} P\left(\tau_n[\hat{\theta}_n(G_n) - \theta(G_\infty)] \leq c_{n,b}(1-\alpha) \right) \geq 1-\alpha
\end{align}
However, such point-wise statements can be misleading in finite samples, as the rate of convergence to the desired coverage level depends on the unknown distribution $P$ and can be arbitrarily slow. We will establish sufficient conditions under which, for some collection of probability measures $\mathcal{P}_n$, we have the following uniform convergence guarantee:
\begin{align}
\label{uniform-coverage}
 \liminf_{n \rightarrow \infty} \inf_{P_n \in \mathcal{P}_n} P\left(\tau_n[\hat{\theta}_n(G_n) - \theta(G_\infty)] \leq c_{n,b}(1-\alpha) \right) \geq 1-\alpha
\end{align}
Such a statement guarantees that, for any $P_n \in \mathcal{P}_n$ and $\epsilon > 0$, there exists some $n^\prime$ such that for all $n > n^\prime$ the coverage exceeds $1 - \alpha -\epsilon$. By inversion of the confidence interval, the above statement also has implications for hypothesis testing.  Suppose $\phi(G_n)$ is an appropriate binary decision rule for $\theta(G_\infty)$ constructed from the quantiles of $J(t,P_\infty)$  and we are interested in constructing a level $\alpha$ test for a null hypothesis $P_n \in \mathcal{P}_{n,0}$ against an alternative hypothesis $P_n \in \mathcal{P}_{n,1}$.  The statement (\ref{uniform-coverage}) ensures that:
\begin{align*}
\limsup_{n \rightarrow \infty} \sup_{P_n \in \mathcal{P}_{n,0}} \mathbb{E}_{P_n} \phi(G_n) \leq \alpha
\end{align*}       
Uniform validity of confidence intervals formed by subsampling was investigated in \citet{Romano-Shaikh-Uniform} for i.i.d. data. In certain settings, uniform validity is known to fail for subsampling; see for example, \citet{andrews-guggenberger-problem-subsampling}. Therefore, uniform validity is not an immediate consequence of point-wise validity.  We will state general theorems below that establishes conditions under which uniform validity holds; see the Supplementary Material for corollaries of this result that establish uniform validity for count functionals (\ref{uniform-validity-counts}) and eigenvalues (\ref{uniform-validity-eigenvalues}). 

\begin{theorem}[Uniform Validity for Vertex Subsampling]
\label{uniform-coverage-theorem-vertex}
Let $b_n = o(n)$ and suppose that:
\begin{align}
\label{sampling-dist-convergence}
\lim_{n \rightarrow \infty} \sup_{P_n \in \mathcal{P}_n} \sup_{t \in \mathbb{R}} \left| J_{n,b}(t,P_n) -  J_{n}(t; P_n) \right|  = 0
\end{align}
Moreover, suppose that for any $\epsilon > 0$, we have that:
\begin{align}
\label{subsampling-dist-convergence}
 \lim_{n \rightarrow \infty} \sup_{P \in \mathcal{P}_n} P \left( \sup_{t \in \mathbb{R}} \left|L_{n,b}(t,P_n) - U_{n,b}(t,P_n) \right| > \epsilon \right) &= 0
\end{align}
Then, (\ref{uniform-coverage}) holds for $c_{n,b}(1-\alpha) = \{ \inf t \in \mathbb{R} \ | \  L_{n,b}(t,P_n) \geq 1-\alpha \}$.
\end{theorem}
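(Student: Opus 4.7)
The plan is to derive a uniform Kolmogorov approximation $\sup_t |L_{n,b}(t,P_n) - J_n(t,P_n)| \xrightarrow{P} 0$ uniformly over $\mathcal{P}_n$ from the two hypotheses, and then to invert at the $(1-\alpha)$-quantile while tracking uniformity. This parallels the pointwise argument for Theorem~\ref{subsampling-fixed-b}, but each approximation must now be shown to hold uniformly over $P_n \in \mathcal{P}_n$.

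By the triangle inequality,
\begin{align*}
 \sup_t |L_{n,b}(t,P_n) - J_n(t,P_n)| &\leq \sup_t |L_{n,b}(t,P_n) - U_{n,b}(t,P_n)| \\
 &\quad + \sup_t |U_{n,b}(t,P_n) - J_{n,b}(t,P_n)| + \sup_t |J_{n,b}(t,P_n) - J_n(t,P_n)|.
\end{align*}
Hypothesis~(\ref{subsampling-dist-convergence}) makes the first summand $o_P(1)$ uniformly over $\mathcal{P}_n$, while hypothesis~(\ref{sampling-dist-convergence}) renders the third summand $o(1)$ deterministically and uniformly. The middle summand is the deterministic gap between the bridging object $U_{n,b}$ (playing the role of the U-statistic mean or centered-at-truth surrogate of $L_{n,b}$) and $J_{n,b}$; this gap arises because $L_{n,b}$ is centered at $\hat\theta_n(G_n)$ rather than at $\theta(G_\infty)$. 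Controlling it uniformly reduces to showing that the recentering shift $\tau_b[\hat\theta_n(G_n) - \theta(G_\infty)]$ is $o_P(1)$ uniformly over $\mathcal{P}_n$, which follows from uniform tightness of $\tau_n[\hat\theta_n - \theta]$ together with $\tau_b/\tau_n \to 0$, a consequence of $b_n = o(n)$ for the rates used in our applications.

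To invert, fix $\epsilon > 0$ and set $q_n(\beta, P_n) = \inf\{t : J_n(t, P_n) \geq \beta\}$. On the high-probability event $E_n(\epsilon) = \{\sup_t |L_{n,b}(t,P_n) - J_n(t,P_n)| \leq \epsilon\}$, monotonicity of CDFs yields $c_{n,b}(1-\alpha) \geq q_n(1-\alpha-\epsilon, P_n)$. Hence
\begin{align*}
 P\left(\tau_n[\hat\theta_n(G_n) - \theta(G_\infty)] \leq c_{n,b}(1-\alpha)\right) &\geq P\left(\tau_n[\hat\theta_n - \theta] \leq q_n(1-\alpha-\epsilon, P_n),\, E_n(\epsilon)\right) \\
 &\geq J_n(q_n(1-\alpha-\epsilon, P_n), P_n) - P(E_n(\epsilon)^c) \\
 &\geq 1 - \alpha - \epsilon - P(E_n(\epsilon)^c).
\end{align*}
Taking $\liminf_{n \to \infty}\inf_{P_n \in \mathcal{P}_n}$ on both sides and then letting $\epsilon \downarrow 0$ produces~(\ref{uniform-coverage}).

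The principal obstacle is the uniform control of the middle bridging term $\sup_t |U_{n,b}(t,P_n) - J_{n,b}(t,P_n)|$, which hinges on uniform tightness of $\tau_n[\hat\theta_n - \theta]$ over $\mathcal{P}_n$. This is not automatic from either hypothesis in isolation; it will have to be folded into the definition of $\mathcal{P}_n$ (as in the corollaries for count statistics and eigenvalues referenced in the theorem) or extracted from additional regularity on the functional. A secondary technicality is that $J_n$ is not assumed uniformly continuous at its $(1-\alpha)$-quantile, so $c_{n,b}(1-\alpha)$ need not converge to $q_n(1-\alpha, P_n)$ itself; the one-sided bound above deliberately sidesteps this by only invoking an $\epsilon$-slack in the probability level, which the uniform Kolmogorov approximation supplies.
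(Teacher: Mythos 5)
Your decomposition and your quantile-inversion step are fine (the inversion is exactly what the paper gets by citing Lemma A.1(vii) of \citet{Romano-Shaikh-Uniform}), but there is a genuine gap in how you treat the middle term, and it stems from a misidentification of what that term is. In the paper's notation, $U_{n,b}(t,P_n) = \frac{1}{N}\sum_i \mathbbm{1}(\tau_b[\hat{\theta}_{n,b,i}(G_n)-\theta(G_\infty)] \leq t)$ is already centered at the \emph{true} parameter, so the entire effect of recentering $L_{n,b}$ at $\hat{\theta}_n(G_n)$ rather than $\theta(G_\infty)$ is contained in the first summand $\sup_t|L_{n,b}-U_{n,b}|$ -- which is precisely what hypothesis (\ref{subsampling-dist-convergence}) assumes away (and which is only verified, via uniform negligibility of $\tau_b[\hat{\theta}_n-\theta]$, in the corollaries such as Proposition \ref{uniform-count-theorem}). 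The middle summand $\sup_t|U_{n,b}-J_{n,b}|$ is not a recentering gap and is not deterministic: since $\mathbb{E}[U_{n,b}(t,P_n)] = J_{n,b}(t,P_n)$, it is the stochastic fluctuation of an exchangeable average of subsample indicators around its mean, uniformly in $t$. Your proposed remedy -- uniform tightness of $\tau_n[\hat{\theta}_n-\theta]$ folded into $\mathcal{P}_n$ -- does not address this term at all, and it would amount to adding an assumption the theorem does not need.

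The missing argument is the one the paper uses: write $U_{n,b}$ via the Hoeffding representation (\ref{hoeffding-representation}) as an average over permutations of $F(j_{\sigma(1)},\ldots,j_{\sigma(n)};t)$, where each $F$ is itself an empirical CDF built from $k=\lfloor n/b_n\rfloor$ indicators computed on vertex-disjoint, hence independent, induced subgraphs, each with CDF $J_{n,b}(\cdot,P_n)$. Then
\begin{align*}
\sup_{P\in\mathcal{P}_n} P\Bigl(\sup_t |U_{n,b}(t,P_n)-J_{n,b}(t,P_n)|>\epsilon\Bigr)
\leq \frac{1}{\epsilon}\sup_{P\in\mathcal{P}_n}\int_0^1 P\Bigl(\sup_t |F(j_1,\ldots,j_n;t)-J_{n,b}(t,P_n)|>u\Bigr)\,du,
\end{align*}
and the Dvoretzky--Kiefer--Wolfowitz inequality gives a \emph{distribution-free} bound on the inner probability that vanishes as $k\to\infty$, i.e. because $b_n=o(n)$. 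This is why Theorem \ref{uniform-coverage-theorem-vertex} holds under exactly the two stated hypotheses, with no uniform tightness condition; as written, your proof cannot close under those hypotheses, while the DKW/Hoeffding step closes it for free.
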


\begin{theorem}[Uniform Validity for p-Subsampling]
\label{uniform-coverage-theorem-p}
Assume that $\tau_n = n^{\alpha}$ for some $\alpha > 0$, $M \rightarrow \infty$, $np_n \rightarrow \infty$, and $np_n = o(n)$.  Further suppose that $(\rho_n, w(u,v))$ satisfy $\rho_n = \omega(\log l_n/l_n)$, and $w(\xi_i,\xi_j) \geq c$ almost surely, where $c > 0$.  In addition, suppose for $l_n, u_n$ given in (\ref{minimum-subsample-size}), we have that:
\begin{align}
\lim_{n \rightarrow \infty} \sup_{P_n \in \mathcal{P}_n} \sup_{l_n \leq j \leq u_n} \sup_{t \in \mathbb{R}} \left| J_{n,j}(t,P_n) -  J(t, P_n) \right|  = 0
\end{align}
Moreover, suppose that for any $\epsilon > 0$, we have that:
\begin{align}
 \lim_{n \rightarrow \infty}  \sup_{P \in \mathcal{P}_n}  P \left( \sup_{t \in \mathbb{R}} \left|L_{n,B}^\prime(t,P_n) - V_{n,B}(t,P_n) \right| > \epsilon \right) &= 0
\end{align}
Then, (\ref{uniform-coverage}) holds for $c_{n,b}(1-\alpha) = \{ \inf t \in \mathbb{R} \ | \  L_{n,b}^\prime(t,P_n) \geq 1-\alpha \}$.
\end{theorem}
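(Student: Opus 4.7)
The plan is to mimic the scaffolding used for the vertex-subsampling counterpart (Theorem \ref{uniform-coverage-theorem-vertex}), while using the approximation technique behind Theorem \ref{p-subsampling-theorem} to route the argument through the auxiliary process $V_{n,B}$ and the limit $J$ rather than through $J_n$ directly. Concretely, the proof behind Theorem \ref{p-subsampling-theorem} already approximates $L_{n,B}^\prime(t, P_n)$ by a convex combination of U-statistics whose expectation given $G_n$ is of the form $\sum_{j=0}^n q_{n,j}(p_n)\, J_{n,j}(t, P_n)$, where $q_{n,j}(p_n) = \binom{n}{j} p_n^j (1-p_n)^{n-j}$ are the Binomial weights over subsample sizes. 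Since the Chernoff bound \eqref{chernoff-binomial-bound} depends only on $np_n$, the mass $\sum_{j \notin [l_n,u_n]} q_{n,j}(p_n)$ is at most $2\exp(-3\log n)$, uniformly over $\mathcal{P}_n$. Thus the mixture is, up to an $o(1)$ term that is uniform in $(t, P_n)$, supported on sizes $j \in [l_n, u_n]$.

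Combining the previous display with the uniform convergence hypothesis $\sup_{l_n \leq j \leq u_n} \sup_t |J_{n,j}(t, P_n) - J(t, P_n)| \to 0$ (uniformly in $P_n$), and the fact that the truncated weights sum to at most one, yields
\begin{align*}
\sup_{P_n \in \mathcal{P}_n} \sup_{t \in \mathbb{R}} \left| V_{n,B}(t, P_n) - J(t, P_n) \right| \to 0.
\end{align*}
A triangle inequality with the other hypothesis $\sup_{P_n \in \mathcal{P}_n} P(\sup_t |L_{n,B}^\prime - V_{n,B}| > \epsilon) \to 0$ then gives uniform-in-$\mathcal{P}_n$ convergence in probability of $L_{n,B}^\prime$ to the deterministic function $J(\cdot, P_n)$. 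This is precisely the p-subsampling analogue of the vertex-subsampling intermediate step in Theorem \ref{uniform-coverage-theorem-vertex}.

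For the passage from uniform convergence of the subsampling CDF to uniform coverage, I would replicate the Romano-Shaikh style subsequence argument used for Theorem \ref{uniform-coverage-theorem-vertex}: if \eqref{uniform-coverage} failed, extract $n_k$ and $P_{n_k} \in \mathcal{P}_{n_k}$ violating coverage by some $\epsilon_0 > 0$, invoke Helly's selection theorem to pass to a further subsequence along which $J(\cdot, P_{n_k})$ converges weakly to a continuous limit $J^\ast$, and then show $c_{n_k,b}(1-\alpha) \to c^\ast(1-\alpha)$ in probability using the uniform convergence of $L_{n,B}^\prime$. Combined with convergence of $J_{n_k}(\cdot, P_{n_k}) \to J^\ast$ (a companion to the stated hypothesis, which holds for the families of interest in Section \ref{uniform_validity}), this yields $P(\tau_{n_k}[\hat{\theta}_{n_k} - \theta] \leq c_{n_k,b}(1-\alpha)) \to 1-\alpha$, a contradiction. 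The main obstacle I anticipate is Step 1: showing that the approximation used in the pointwise proof of Theorem \ref{p-subsampling-theorem} is genuinely uniform in $P_n$. This should reduce to verifying that the approximation error depends on $P_n$ only through $\tau_n$, $np_n$, and $\rho_n$, so that the same Bernstein/Chernoff and U-statistic bounds carry through with suprema moved inside; the condition $w(\xi_i,\xi_j) \geq c$ and $\rho_n = \omega(\log l_n / l_n)$ are what guarantee the $p$-sampled graphs retain roughly $np_n$ non-isolated vertices with high probability uniformly in $\mathcal{P}_n$.
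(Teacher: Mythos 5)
Your first step is essentially the paper's own argument: approximate $L_{n,B}^\prime$ by $V_{n,B}$ (this is exactly the second hypothesis), pass to $\mathbb{E}[V_{n,B}(t,P_n)\,|\,G_n]$, which is a Binomial mixture of U-statistics over subsample sizes, discard $j \notin [l_n,u_n]$ via the Chernoff bound (uniform in $P_n$ since it depends only on $np_n$), invoke the uniform hypothesis on $J_{n,j}$, and handle the deletion of isolated vertices through $w \geq c$ and $\rho_n = \omega(\log l_n/l_n)$. Two caveats there: your displayed claim $\sup_{P_n}\sup_t |V_{n,B}(t,P_n) - J(t,P_n)| \to 0$ can only be a statement in probability, since $V_{n,B}$ is random; and ``moving the suprema inside'' is not automatic from fixed-$t$ Hoeffding/Chernoff bounds. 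The paper controls $\sup_t$ with the Dvoretzky--Kiefer--Wolfowitz inequality twice: conditionally on $G_n$, $V_{n,B}(\cdot,P_n)$ is an empirical CDF of $M$ i.i.d.\ draws with CDF $\mathbb{E}[V_{n,B}(\cdot,P_n)\,|\,G_n]$ (this is where $M \to \infty$ enters), and each $U_{n,j}$ is handled through its Hoeffding block representation $F^{(j)}$ together with Markov's inequality and DKW applied to the independent blocks. Your proposal should name this mechanism rather than assert that the pointwise bounds carry over.

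The genuine gap is in your final step. You attribute a subsequence argument to the proof of Theorem \ref{uniform-coverage-theorem-vertex}, but the paper's passage from the Kolmogorov-distance estimate to coverage is the quantitative Lemma A.1(vii) of \citet{Romano-Shaikh-Uniform}: if $\sup_{t}\bigl|L_{n,B}^\prime(t,P_n) - J_n(t,P_n)\bigr| \leq \epsilon$ with probability at least $1-\delta$, then the coverage is at least $1-(\alpha+\epsilon+\delta)$, for every $P_n$, with no continuity, tightness, or anti-concentration requirements; combined with the triangle inequality $|L^\prime - J_n| \leq |L^\prime - V| + |V - J| + |J - J_n|$ (the last term being the companion condition you correctly flag, which the paper also uses implicitly) this gives (\ref{uniform-coverage}) directly. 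Your Helly route, by contrast, needs the subsequential limit $J^\ast$ to be a proper and continuous CDF so that $c_{n_k,b}(1-\alpha) \to c^\ast(1-\alpha)$: nothing in the hypotheses guarantees tightness of $\{J(\cdot,P_n)\}$ (Helly only yields a possibly defective vague limit), and an atom or flat spot of $J^\ast$ at the $1-\alpha$ level defeats the quantile convergence you need. You are also proving more than required — the exact limit ``coverage $\to 1-\alpha$'' along the bad subsequence — whereas the theorem only asserts the one-sided $\liminf$ bound, which the distribution-free inequality delivers without these extra assumptions. Replacing your subsequence step with that inequality closes the argument and matches the paper.
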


\section{Simulation Study}
\label{simulation-study}
We investigate the finite-sample properties of confidence intervals for various eigenvalue statistics formed by subsampling.  We consider two sparse graphon models; for these models, it will be more natural to consider the following parameterization:
\begin{align}
h_n(u,v) = P(A_{ij} = 1 \ | \ \xi_i = u, \xi_j = v ) = \nu_n h(u,v)
\end{align}
where $h(u,v)$ is a dense graphon and $\nu_n$ is a sparsity parameter. It then follows that $\rho_n = \nu_n \int h(\xi_i, \xi_j) \ dP$. We study the performance of our method for sample sizes ranging from $n=1000$ to $n=10000$ and sparsity parameters ranging from $\nu_n = n^{-0.1}$ to $\nu_n = n^{-0.45}$.  We also vary the subsample size so that $b_n$ ranges from $0.1n$ to $0.3n$ for vertex subsampling and $p_n$ from $0.1$ to $0.3$ for $p$-subsampling.  


The data generating processes considered in our simulation study are described below.  For each of these processes, we approximate the true parameter $\lambda_i(w)$ by simulating a graph of size $n=20000$ and $\nu_n = 1$. We then approximate the population parameter with $\lambda_i(A^{(n)})/n \hat{\rho}_n$.  We assess coverage by counting the number of times the parameter falls within our confidence interval; to this end, we simulate the model $500$ times and construct a confidence interval from $N=500$ subsamples for each iteration.              

\subsubsection{Stochastic Block Model (SBM)}
\label{sec:sbm}
We consider a three class stochastic block model studied in \citet{lei-graph-root}, with $ B =\bigl( \begin{smallmatrix} 1/4 & 1/2 & 1/4 \\ 1/2 & 1/4 & 1/4 \\  1/4 & 1/4 & 1/6 \end{smallmatrix}\bigr)$ and $\pi = (0.3, 0.3, 0.4)$.  The corresponding graphon is rank 2 and has one positive and one negative eigenvalue, with $\lambda_1 = 1.035$  and $\lambda_2 = -0.267$. 


\subsubsection{Gaussian Latent Space Model (GLSM)}
\label{sec:glsm}
We also investigate the properties of our procedure for a graphon model that is not low rank.  The following model is a special case of the Gaussian Latent Space Model studied in \citet{rastrelli-properties-latent-network-models}.
Let $\xi_i \sim N(0,1)$ and define:
\begin{align}
\label{eq:glsm_graphon}
h_n(u,v) = \nu_n \exp\bigl( -25(u- v)^2 \bigr) 
\end{align} 
 We study the behavior of our procedure for the top 3 positive eigenvalues; the associated parameters are: $\lambda_1 = 1.311, \lambda_2 = 1.147,$ and $\lambda_3 = 1.011$.
 \subsection{Simulation Results}

 \begin{figure}[h] 
 		\hspace{-1.5em}
\raggedright
  \begin{subfigure}[b]{0.3\linewidth}
    \raggedright
    \includegraphics[trim={0.55cm 0.5cm 0.5cm 0},clip,width=0.92\linewidth]{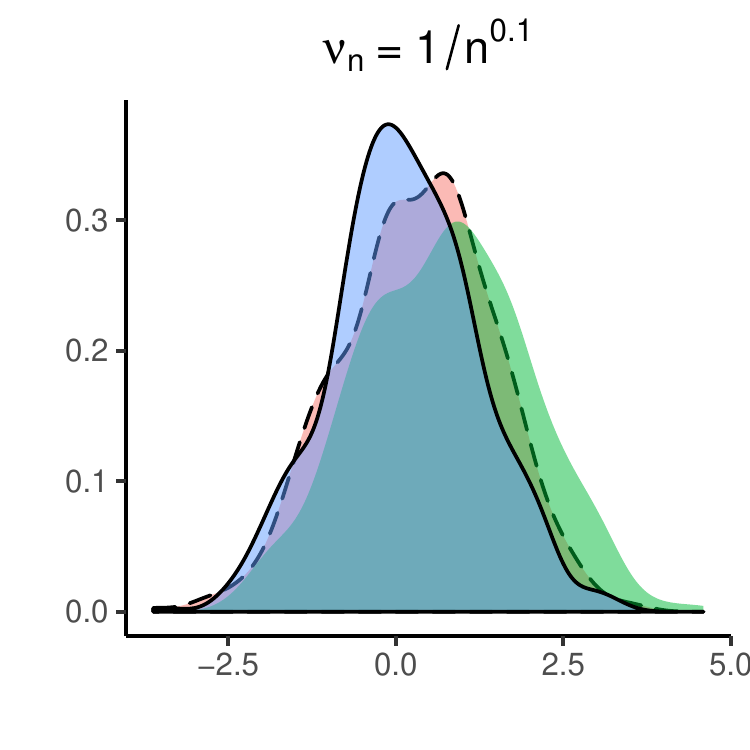} 
  \end{subfigure}
  \begin{subfigure}[b]{0.3\linewidth}
    \raggedright
    \includegraphics[trim={0.55cm 0.5cm 0.5cm 0},clip,width=0.92\linewidth]{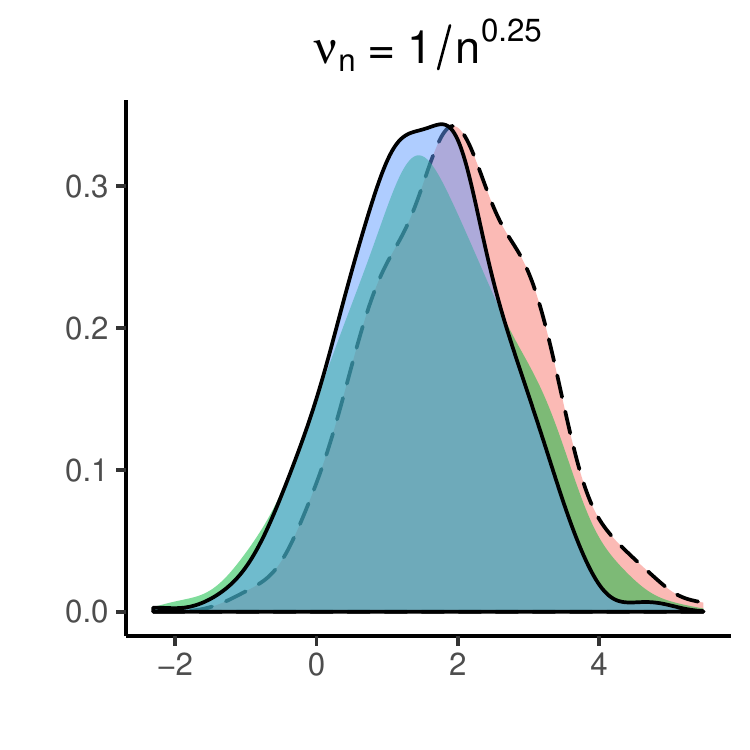} 
  \end{subfigure} 
  \begin{subfigure}[b]{0.3\linewidth}
    \raggedright
    \includegraphics[trim={0.67cm 0.5cm 0.5cm 0},clip,height =\linewidth, width=1.52\linewidth]{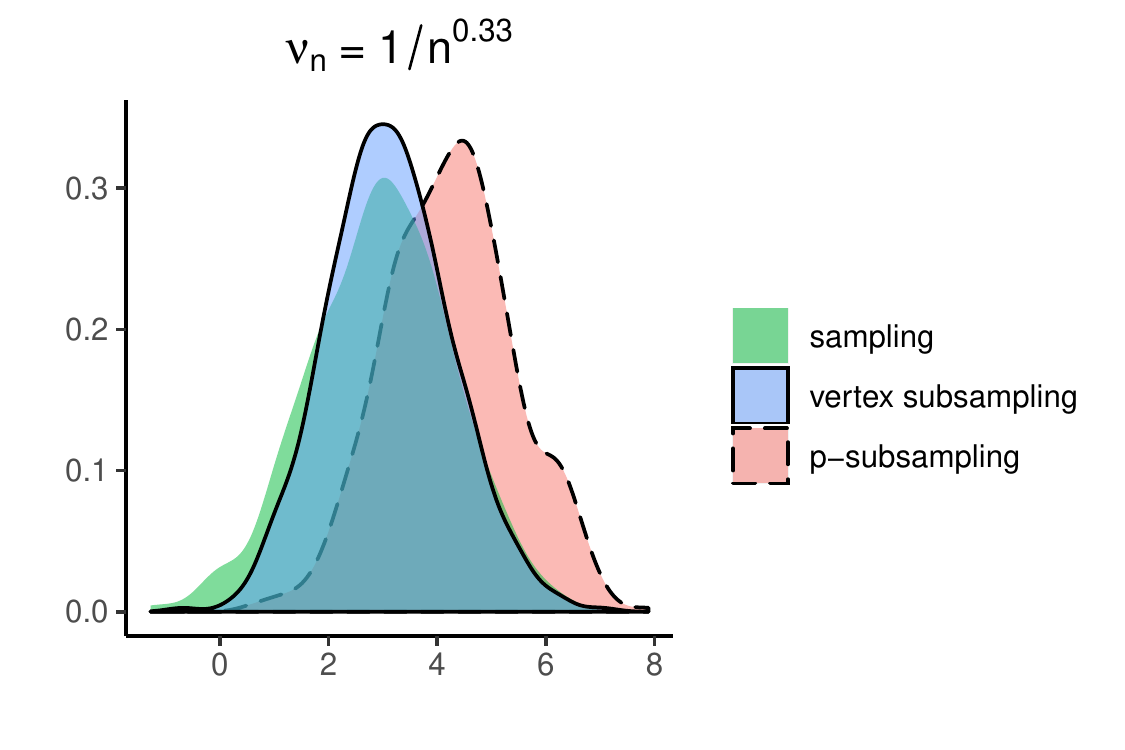} 
      \end{subfigure}  
  \caption{Sampling and subsampling distributions for inference on $\lambda_2(w)$ of the Gaussian Latent Space Model.  The sampling distribution is formed from 500 samples of graphs with $n = 7750$ vertices.  The subsampling distributions are formed from 500 subsamples of a given graph with $n = 7750$ vertices, and an (expected) subsample size of $b_n = 0.3n$.}
  \label{first-eig-plot}
\end{figure}
Our simulation results suggest that $p$-subsampling and vertex subsampling have very similar coverage properties for eigenvalues. Another key takeaway is that the coverage properties of subsampling are heavily affected by sparsity, particularly for eigenvalues with smaller magnitude.  For moderately sparse graphs, confidence intervals for the maximum eigenvalue attain the desired coverage level at moderate values of $n$; however, coverage for the other eigenvalues is more sensitive to sparsity.

We present the distributions of the subsampled $\lambda_2$ for a network with around 8000 nodes generated from a GLSM (Section~\ref{sec:glsm}), with average degree decaying from the left to the rightmost panel in Figure~\ref{first-eig-plot}. The behavior of $\lambda_1$ (see the table in Section~\ref{simulations-tables} of the Supplement) shows less degradation as one increases sparsity. We include similar plots of $\lambda_1$ and $\lambda_2$ for the SBM (Section~\ref{sec:sbm}) in Figure~\ref{second-eig-plot} and~\ref{third-eig-plot}.

While our theoretical results require $b_n = o(n)$, by and large, subsampling performs best when $b_n = 0.3n$, which is the largest subsample size we consider.  This is most likely due to the bias of our estimator, which is most visible in Figure \ref{third-eig-plot}.  As Figures \ref{first-eig-plot} and \ref{second-eig-plot} indicate, the bias appears to be less problematic for eigenvalues that are well-separated from the bulk.  In fact, for the maximal eigenvalue of the SBM and GLSM, our procedure exhibits strong coverage properties even at $n=1000$ for the challenging regime of $\nu_n = n^{-0.33}$.  For GLSM, $\lambda_2$ is also well-behaved in this sparse regime.
\begin{figure}[h] 
	\hspace{-1.5em}
\raggedright
  \begin{subfigure}[b]{0.3\linewidth}
    \raggedright
    \includegraphics[trim={0.55cm 0.5cm 0.5cm 0},clip,width=0.92\linewidth]{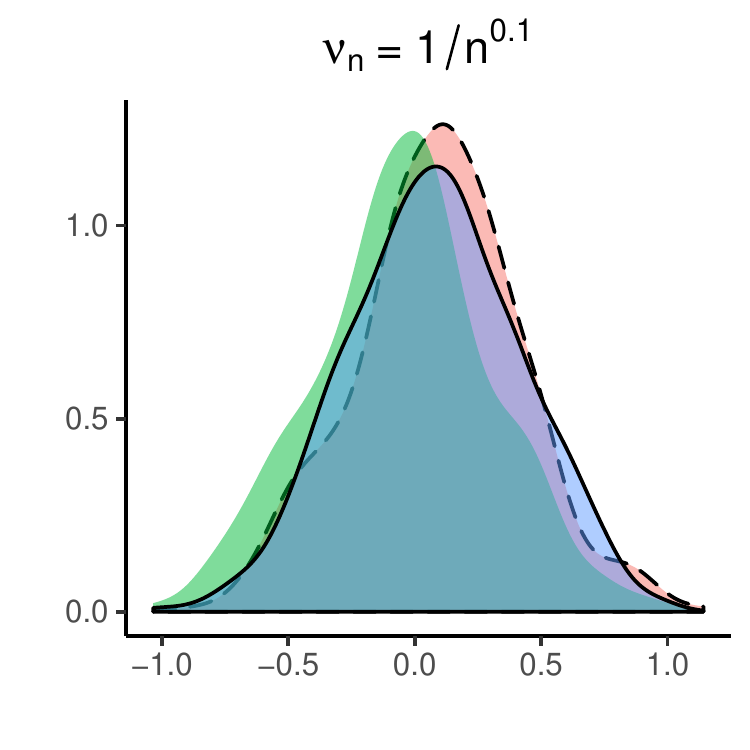} 
  \end{subfigure}
  \begin{subfigure}[b]{0.3\linewidth}
    \raggedright
    \includegraphics[trim={0.55cm 0.5cm 0.5cm 0},clip,width=0.92\linewidth]{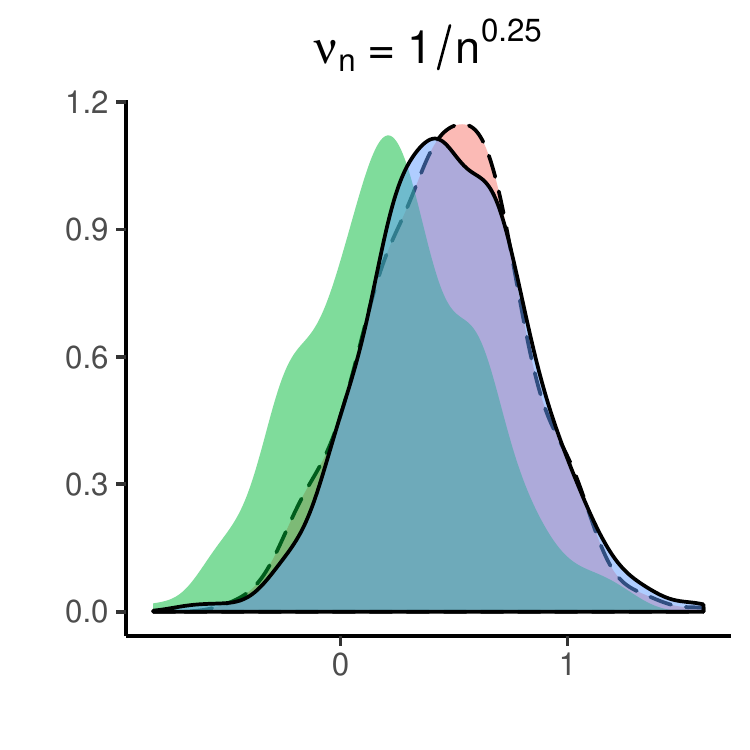} 
  \end{subfigure} 
  \begin{subfigure}[b]{0.3\linewidth}
    \raggedright
    \includegraphics[trim={0.67cm 0.5cm 0.5cm 0},clip,height =\linewidth, width=1.52\linewidth]{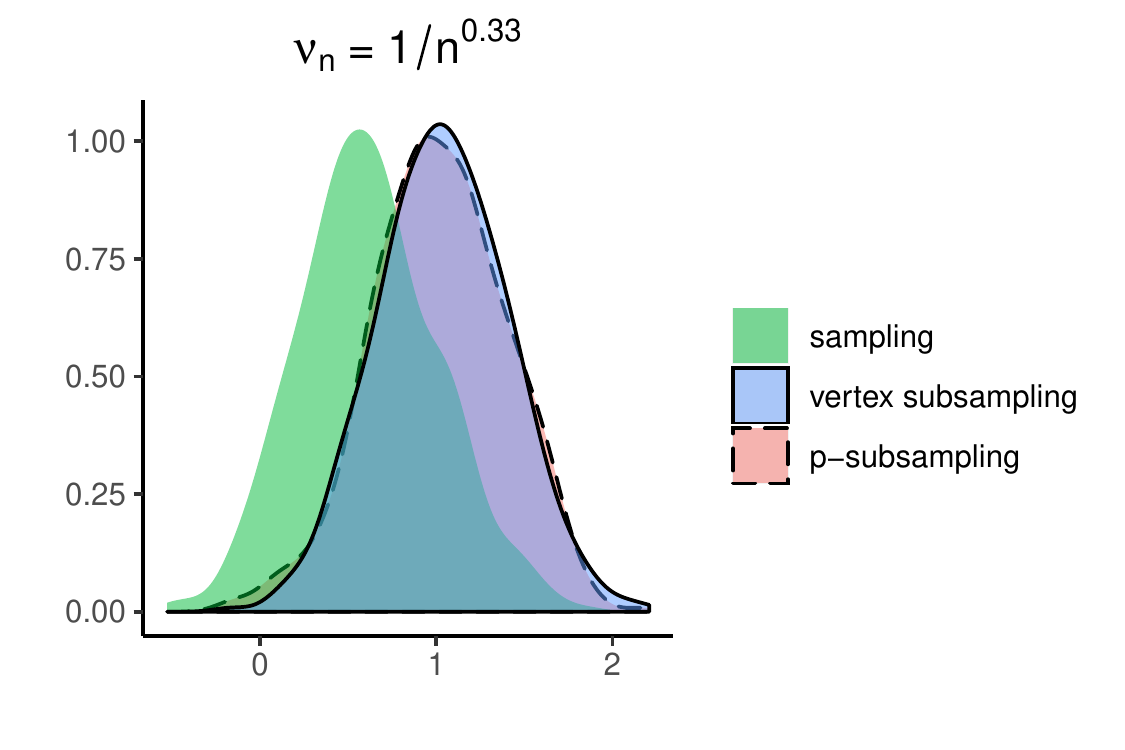} 
      \end{subfigure}  
  \caption{Sampling and subsampling distributions for inference on $\lambda_1(w)$ of the Stochastic Block Model. Simulation settings are identical to those described in Figure \ref{first-eig-plot}. }
  \label{second-eig-plot}
\end{figure}

 \begin{figure}[h] 
 		\hspace{-1.5em}
\raggedright
  \begin{subfigure}[b]{0.3\linewidth}
    \raggedright
    \includegraphics[trim={0.55cm 0.5cm 0.5cm 0},clip,width=0.92\linewidth]{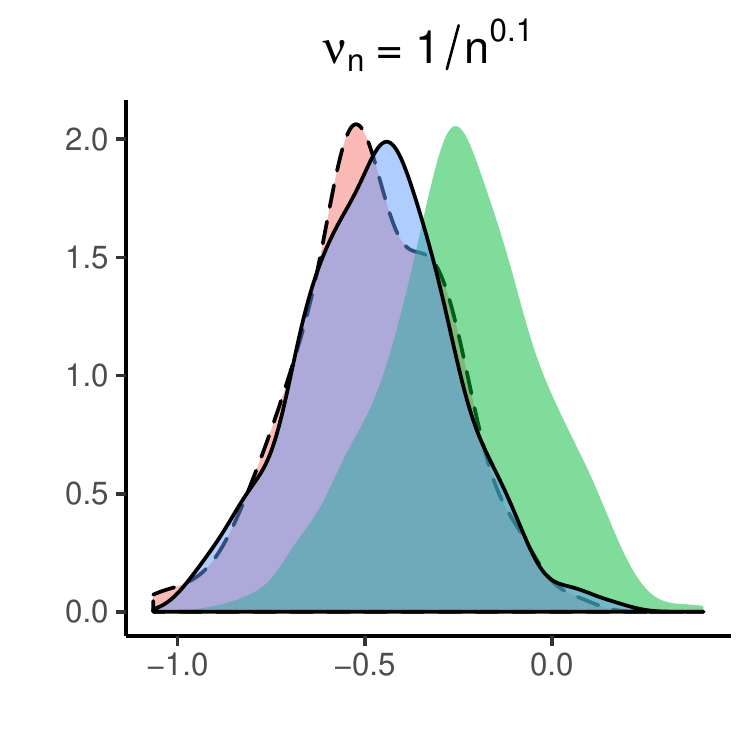} 
  \end{subfigure}
  \begin{subfigure}[b]{0.3\linewidth}
    \raggedright
    \includegraphics[trim={0.55cm 0.5cm 0.5cm 0},clip,width=0.92\linewidth]{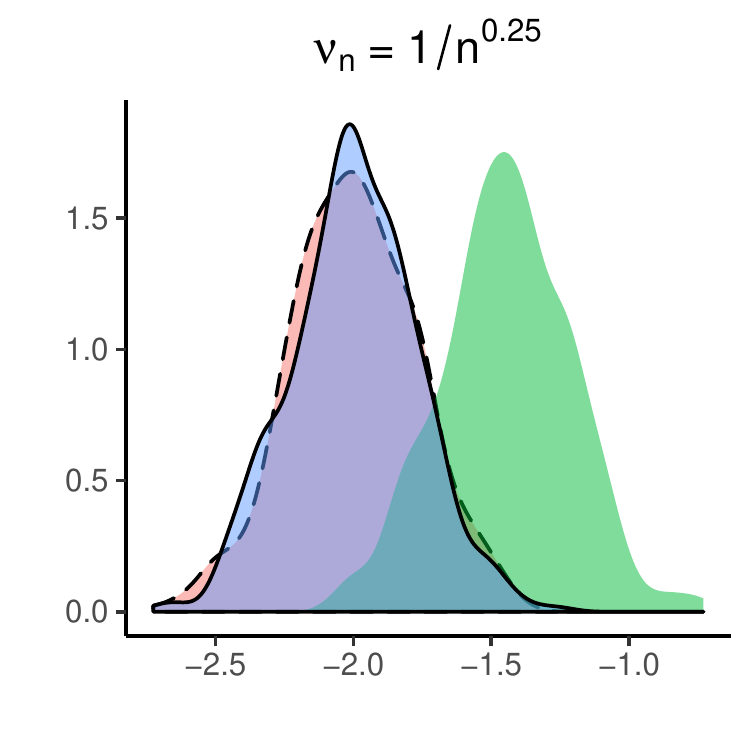} 
  \end{subfigure} 
  \begin{subfigure}[b]{0.3\linewidth}
    \raggedright
    \includegraphics[trim={0.67cm 0.5cm 0.5cm 0},clip,height =\linewidth, width=1.52\linewidth]{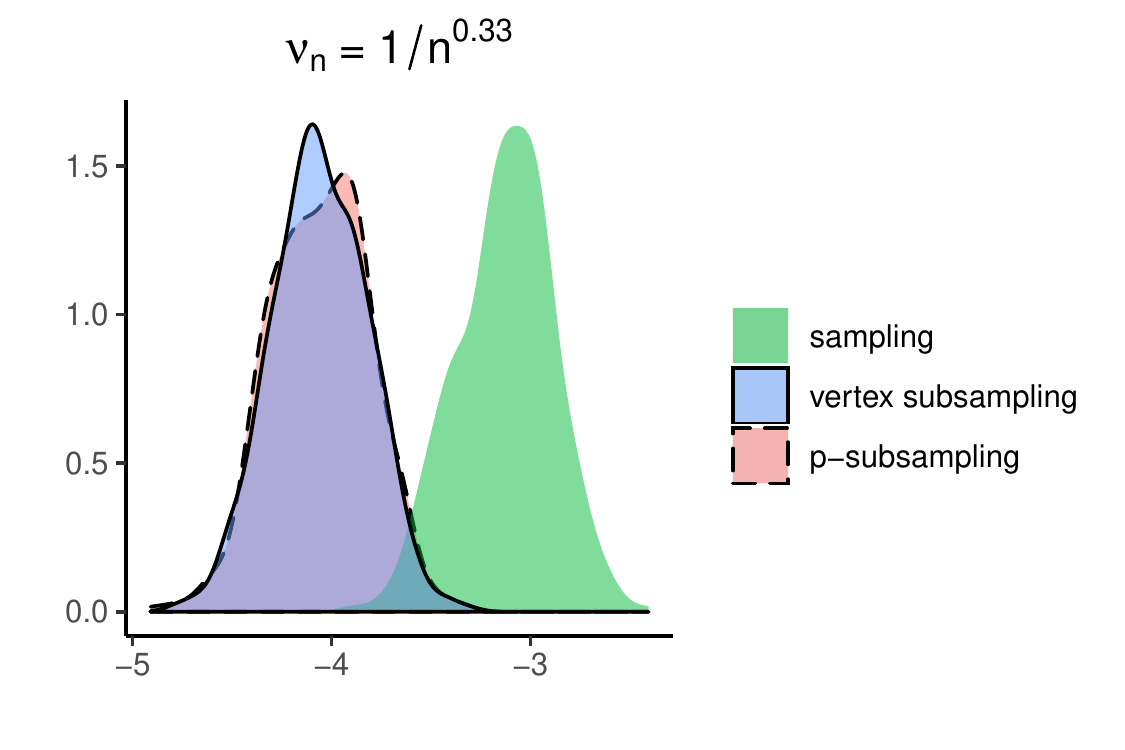} 
      \end{subfigure}  
  \caption{Sampling and subsampling distributions for inference on $\lambda_2(w)$ of the Stochastic Block Model.  Simulation settings are identical to those described in Figure \ref{first-eig-plot}.}
  \label{third-eig-plot}
\end{figure}

For $\lambda_2$  of the SBM (which is negative), coverage is close to the stipulated value only when $\nu_n = n^{-0.1}$ and $b_n = 0.3n$. For the GLSM, coverage for $\lambda_3$ is much worse for sparse graphs even for large $n$.  For tables summarizing our simulation results, see Section~\ref{simulations-tables} of the Supplement.

\section{Real Data Example: Facebook Networks}
\label{real-data}
Using Facebook networks from 100 universities in 2005~\citep{traud-social-networks-facebook}, we perform two sample tests to see whether two social networks are generated by the same sparse graphon.  Facebook networks are well-suited for our method, as they are typically large and not too sparse. They are also undirected, ensuring real-valued spectra.

We will consider a two-sample test based on subsampled eigenvalues for comparing social networks of different universities.  With the same Facebook dataset, \citet{Bhattacharyya-subsample-count-features} consider two-sample testing using subsampled count functionals; here, we examine the relative merits of using subsampled eigenvalue statistics for comparing two networks.  

Before proceeding, we would like to mention some of the strengths and weaknesses of the eigenvalue approach compared to the subgraph approach that are not directly related to the power of the test.  First, note that the entire spectrum of a symmetric matrix can be computed in $O(n^3)$ time and calculating the top few eigenvalues can be performed even faster.  For count functionals, a brute-force subgraph search with $p= |\mathcal{V}(R)|$ has complexity $O(n^p)$.  However, as noted in \citet{Bickel-Chen-Levina-method-of-moments}, for sparser graphs, certain functionals such as kl-wheels can be counted faster.  From a computational perspective, eigenvalue statistics generally may be preferable for network comparisons.  On the other hand, eigenvalue statistics require stronger conditions on the sparsity level; consequently, count functionals may be safer to use for sparser graphs.   

With eigenvalue statistics, it is also not clear a priori which statistics will be most effective at distinguishing two networks.  To limit false discoveries, we consider a sample splitting procedure in which one subnetwork is used for formulating hypotheses and the other is used for testing the hypotheses.  The most natural procedure under the model (Eq~\ref{bernoulli-model}) is node-splitting, in which nodes are randomly split into two disjoint sets and the corresponding induced subgraphs are used.  Under our model, the induced subgraphs are independent and thus, inferences based on node-splitting are valid. 

In this section, we compare the Facebook networks of University of Pennsylvania (Penn) with Columbia and Yale with Princeton. 
\begin{figure}[htb]
\centering 
  \begin{subfigure}[b]{0.46\linewidth}
    \centering
    \includegraphics[width=0.95\linewidth]{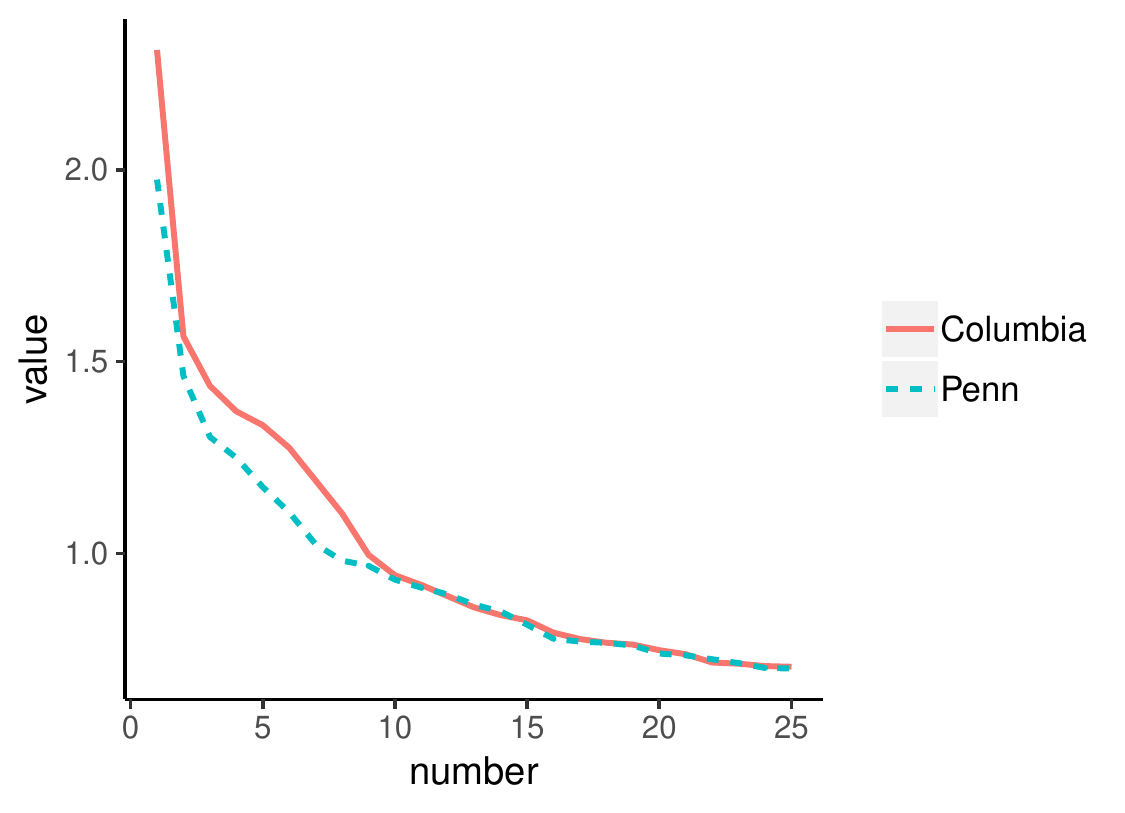} 
    \caption{\label{fig:colpenn} Columbia vs Penn}
  \end{subfigure} 
  \begin{subfigure}[b]{0.46\linewidth}
    \centering
    \includegraphics[width=0.95\linewidth]{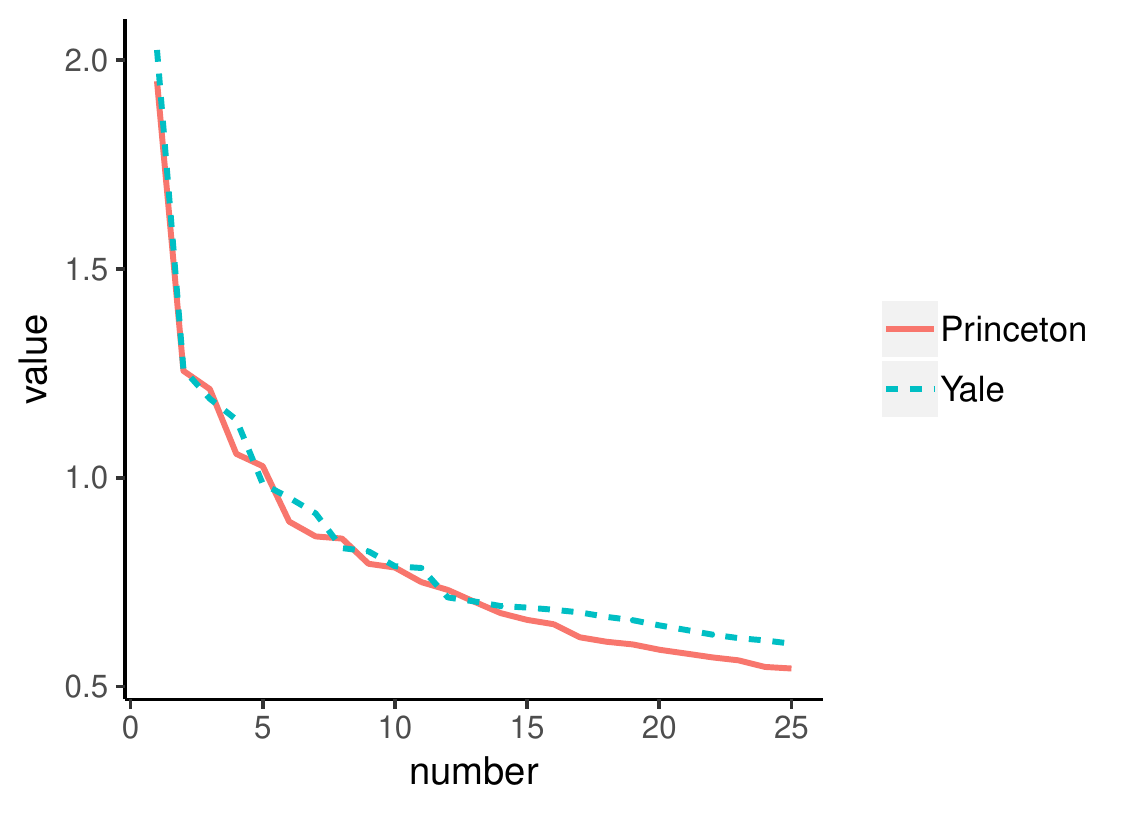} 
    \caption{\label{fig:princetonyale} Princeton vs Yale}
  \end{subfigure} 
  \caption{A comparison of the top 25 normalized eigenvalues calculated on the subnetwork used for exploration.  We see that the behavior of the top few eigenvalues and the rate of decay of the eigenvalues are a few features that vary across collegiate networks.  While differences in the spectrum for the Columbia vs Penn (Fig~\ref{fig:colpenn}) comparison are visible to the eye, it appears that the Princeton and Yale (Fig~\ref{fig:princetonyale}) spectra are very similar.} 
\end{figure} 

\subsection{Data Analysis Results}
For each comparison, we find $k$ ($k\leq 5$) such that $\lambda_k$ has the largest difference between the two exploratory subnetworks.  Then, on the held-out subnetwork, we use subsampling to perform a two-sample test. We mainly consider principal eigenvalues since inference becomes harder for eigenvalues closer to the bulk.  For each network, a $97.5\%$ confidence interval is constructed for the test statistic. If the confidence intervals are disjoint, we may reject the null hypothesis that the graphs were generated by the same graphon with a significance level of $\alpha = 0.05$. The confidence intervals are constructed via vertex subsampling, with a subsample size chosen to be about $33 \%$ of the test set.  While this may seem to be a large proportion, it appears that smaller subsample sizes result in the estimated parameters being systematically outside of the confidence interval, suggesting substantial bias.  The maximum eigenvalue, which is typically well-behaved, is an exception.  

Our results are presented in Table~\ref{table:collegesmain}.  We reject the null hypothesis for Columbia vs Penn, but fail to reject for Princeton vs Yale.  The eigenvalue plots (Fig~\ref{fig:princetonyale}) suggested that it would be difficult to distinguish between the spectra of the Yale and Princeton networks, so it is not surprising that we were not able to reject the null hypothesis here. 

We would like to emphasize that the validity of subsampling hinges on a weak convergence result, which requires a non-trivial sparsity condition.  Since our notion of sparsity pertains to a sequence of graphs and we only have one observation for each school, it is difficult to say whether this condition is satisfied.  However, we would like to note that one would expect the confidence intervals to be very wide if the graphs were too sparse, which does not appear to be the case. A more pressing issue is the bias; we leave the possibility of developing explicit bias corrections to future work.

\begin{table}[H]
\renewcommand*{\arraystretch}{1.5}
\centering
\begin{tabular}{|l|l|l|l|l|l|l}
\cline{1-6}
  Schools   &   n              & Test statistic          & Value       &   $97.5\%$ CI        & Decision       &  \\ \cline{1-6}                                                                
  Columbia  & $5885$       & $\lambda_1/n \rho_n$  &   2.522     &  $(2.124,2.769)$     & Reject $\mathrm{H}_0$         &  \\
  Penn      & $7458$       &                       &   2.033     &  $(1.853, 2.100)$    &                      &  \\ \cline{1-6}
  Princeton & $3298$      & $\lambda_4/n \rho_n$  &   1.057     &  $(0.929, 1.066)$    & Fail to reject $\mathrm{H}_0$ &  \\
  Yale      & $4289$       &                       &   1.031     &  $(0.906, 1.030)$    &                      &  \\ \cline{1-6}
\end{tabular}
\smallskip
\caption{\label{table:collegesmain}Two-sample test results} 
\end{table} 
%

\section{Discussion}
In this paper, we establish the validity of two subsampling schemes for network data generated by the sparse graphon model.  Our result demonstrates that nonparametric inference for networks is possible under conditions similar to those for other data generating processes.  We also derive a weak convergence result for the nonzero eigenvalues of a wide range of sparse graphon models.  While this result is of sufficient interest in itself, it also establishes conditions under which subsampling may be used to yield asymptotically valid inferences for eigenvalues under our model.  

While our result applies to a wide range of graphon models, the imposed conditions on $\{\rho_n \}_{n \in \mathbb{N}}$ suggest that one should only consider subsampling for eigenvalues when the graph is not too sparse.  Our simulations also suggest that, caution must be exercised when making inferences involving eigenvalues closer to the bulk of the spectrum since in these cases, the noise term in (\ref{eigenvalue_decomposition}) appears to decay more slowly.  

We believe that there are several interesting directions for future work.  It remains to be seen whether subsampling validity can be established for certain functionals of graphexes. Our work also leaves open the possibility of more computationally tractable approaches to inference for count functionals of sparse graphons.  If a weak convergence result can be shown for approximate subgraph counting methods, then subsampling validity may be established using the results in this paper.

Finally, it would be interesting to investigate the relationship between the maximum eigenvalue and the average degree for graphon models in greater detail.  For general graphs, it is well-known that the maximum eigenvalue is bounded between the average degree and maximum degree.  For sparse graphons, our simulations in Section~\ref{appendix-max-eigenvalue} of the Supplement suggest that the maximum eigenvalue and the average degree are also highly correlated under general conditions. It would be of interest to derive conditions under which an estimator of the maximum eigenvalue of the graphon operator with an estimated sparsity parameter outperforms an oracle estimator that uses the true sparsity parameter.


  

\bibliographystyle{apalike}
 \bibliography{paper.bib} 

\begin{thebibliography}{}

\bibitem[Abbe, 2018]{abbe-community-detection-sbm-developments}
Abbe, E. (2018).
\newblock Community detection and stochastic block models: Recent developments.
\newblock {\em Journal of Machine Learning Research}, 18:1--186.

\bibitem[Airoldi et~al., 2008]{airoldi-mmsb}
Airoldi, E.~M., Blei, D.~M., Fienberg, S.~E., and Xing, E.~P. (2008).
\newblock Mixed membership stochastic blockmodels.
\newblock {\em Journal of Machine Learning Research}, 9:1981--2014.

\bibitem[Aldous, 1981]{aldous-representation-array}
Aldous, D.~J. (1981).
\newblock Representations for partially exchangeable arrays of random
  variables.
\newblock {\em Journal of Multivariate Analysis}, 11:581--598.

\bibitem[Ali et~al., 2016]{reinert-netdis}
Ali, W., Wegner, A.~E., Gaunt, R.~E., Deane, C.~M., and Reinert, G. (2016).
\newblock Comparison of large networks with sub-sampling strategies.
\newblock {\em Scientific Reports}, 6.

\bibitem[Andrews and Guggenberger,
  2010]{andrews-guggenberger-problem-subsampling}
Andrews, D.~W. and Guggenberger, P. (2010).
\newblock Asymptotic size and a problem with subsampling and the m out of n
  bootstrap.
\newblock {\em Econometric Theory}, 26:426--468.

\bibitem[Assadi et~al., 2018]{SUBG:assadi-sublinear-arbitrary-subgraph}
Assadi, S., Kapralov, M., and Khanna, S. (2018).
\newblock A simple sublinear-time algorithm for counting arbitrary subgraphs
  via edge sampling.
\newblock {\em ArXiv e-prints}.

\bibitem[Bar-Yosseff et~al., 2002]{Streaming:Reduction-streaming-algorithms}
Bar-Yosseff, Z., Kumar, R., and Sivakumar, D. (2002).
\newblock Reductions in streaming algorithms, with an application to counting
  triangles in graphs.
\newblock In {\em Proceedings of SODA}, pages 623--632.

\bibitem[Bera and Chakrabarti, 2017]{Streaming:bera-tighter-bounds}
Bera, S.~K. and Chakrabarti, A. (2017).
\newblock Towards tighter space bounds for counting triangles and other
  substructures in graph streams.
\newblock In {\em 34th Symposium on Theoretical Aspects of ComputerScience},
  STACS 2017, pages 11:1--11:14.

\bibitem[Bhattacharyya and Bickel,
  2015]{Bhattacharyya-subsample-count-features}
Bhattacharyya, S. and Bickel, P.~J. (2015).
\newblock Subsampling bootstrap of count features of networks.
\newblock {\em Annals of Statistics}, 43:2384--2411.

\bibitem[Bickel and Chen, 2009]{Bickel-Chen-on-modularity}
Bickel, P.~J. and Chen, A. (2009).
\newblock A nonparametric view of network models and {Newman}-{Girvan} and
  other modularities.
\newblock {\em Proceedings of the National Academy of Sciences (USA)},
  106:21068--21073.

\bibitem[Bickel et~al., 2011]{Bickel-Chen-Levina-method-of-moments}
Bickel, P.~J., Chen, A., and Levina, E. (2011).
\newblock The method of moments and degree distributions for network models.
\newblock {\em Annals of Statistics}, 39:38--59.

\bibitem[Borgs et~al., 2017]{borgs-graphon-process}
Borgs, C., Chayes, J.~T., Cohn, H., and Holden, N. (2017).
\newblock Sparse exchangeable graphs and their limits via graphon processes.
\newblock {\em Journal of Machine Learning Research}, 18:7740--7810.

\bibitem[Borgs et~al., 2014]{borgs-lp-part-one}
Borgs, C., Chayes, J.~T., Cohn, H., and Zhao, Y. (2014).
\newblock An {$L^p$} theory of sparse graph convergence {I}: Limits, sparse
  random graph models, and power law distributions.
\newblock {\em ArXiv e-prints}.

\bibitem[Borgs et~al., 2012]{borgs-convergent-dense-graphs}
Borgs, C., Chayes, J.~T., Lov\'{a}sz, L., S\'{o}s, V.~T., and Vesztergombi, K.
  (2012).
\newblock Convergent sequences of dense graphs {II}: Multiway cuts and
  statistical physics.
\newblock {\em Annals of Mathematics}, 176:151--219.

\bibitem[Buldygin and Moskvichova, 2013]{buldygin-sub-gaussian-binary}
Buldygin, V. and Moskvichova, K. (2013).
\newblock The sub-{Gaussian} norm of a binary random variable.
\newblock {\em Theory of Probability and Mathematical Statistics}, 86:33--49.

\bibitem[Eden et~al., 2017]{SUBG:eden-sublinear-triangles}
Eden, T., Levi, A., Ron, D., and Seshadhri, C. (2017).
\newblock Approximately counting triangles in sublinear time.
\newblock {\em SIAM Journal of Computing}, 46:1603--1646.

\bibitem[Eldridge et~al., 2018]{eldridge-beyond-davis-kahan}
Eldridge, J., Belkin, M., and Wang, Y. (2018).
\newblock Unperturbed: spectral analysis beyond {Davis-Kahan}.
\newblock In {\em Proceedings of Algorithmic Learning Theory (ALT)}, volume~83,
  pages 321--358.

\bibitem[Feige, 2006]{SUBG:Feige-average-degree}
Feige, U. (2006).
\newblock On sums of independent random variables with unbounded variance, and
  estimating the average degree in a graph.
\newblock {\em SIAM Journal of Computing}, 35:964--984.

\bibitem[F\"{u}redi and Koml\'{o}s, 1981]{furedi-komlos-eigenvalue-dist}
F\"{u}redi, Z. and Koml\'{o}s, J. (1981).
\newblock The eigenvalues of random symmetric matrices.
\newblock {\em Combinatorica}, 1:233--241.

\bibitem[Gao et~al., 2015]{gao-minimax-graphon-estimation}
Gao, C., Lu, Y., and Zou, H.~H. (2015).
\newblock Rate-optimal graphon estimation.
\newblock {\em Annals of Statistics}, 43:2624--2652.

\bibitem[Gao and Ma, 2019]{gao-ma-minimax-network-analysis}
Gao, C. and Ma, Z. (2019).
\newblock Minimax rates in network analysis: Graphon estimation, community
  detection and hypothesis testing.

\bibitem[Ghoshdastidar et~al., 2017]{two-sample-test-network-statistics}
Ghoshdastidar, D., Gutzeit, M., Carpentier, A., and von Luxburg, U. (2017).
\newblock Two-sample tests for large random graphs using network statistics.
\newblock In {\em Conference on Learning Theory (COLT)}.

\bibitem[Goldreich and Ron,
  2008]{SUBG:Goldreich-Ron-Approximating-Average-Parameters}
Goldreich, O. and Ron, D. (2008).
\newblock Approximating average parameters of graphs.
\newblock {\em Random Structures and Algorithms}, 32:473--493.

\bibitem[Gonen et~al., 2010]{SUBG:gonen-sublinear-stars}
Gonen, M., Ron, D., and Shavitt, Y. (2010).
\newblock Counting stars and other small subgraphs in sublinear time.
\newblock In {\em Proceedings of the Twenty-First Annual ACM-SIAM Symposium on
  Discrete Algorithms}.

\bibitem[Green and Shalizi, 2017]{green-shalizi-network-bootstrap}
Green, A. and Shalizi, C. (2017).
\newblock Bootstrapping exchangeable random graphs.
\newblock {\em ArXiv e-prints}.

\bibitem[Gretton et~al., 2012]{kernel-two-sample-test}
Gretton, A., Borgwardt, K.~M., Raschand, M.~J., Sch\"{o}lkopf, B., and Smola,
  A. (2012).
\newblock A kernel two-sample test.
\newblock {\em Journal of Machine Learning Research}, 13:723--773.

\bibitem[Hoff et~al., 2002]{hoff-raftery-handcock-latent-space-model}
Hoff, P.~D., Raftery, A.~E., and Handcock, M.~S. (2002).
\newblock Latent space approaches to social network analysis.
\newblock {\em Journal of the American Statistical Association},
  97(460):1090--1098.

\bibitem[Holland et~al., 1983]{holland-sbm}
Holland, P.~W., Laskey, K., and Leinhardt, S. (1983).
\newblock Stochastic blockmodels: First steps.
\newblock {\em Social Networks}, 5(2):109 -- 137.

\bibitem[Hoover, 1979]{hoover-exchangeability}
Hoover, D.~N. (1979).
\newblock {\em Relations on probability spaces arrays of random variables}.
\newblock Institute for Advanced Study,RI.

\bibitem[Kallaugher and Price, 2017]{Kallaugher:hybrid-triangle}
Kallaugher, J. and Price, E. (2017).
\newblock A hybrid sampling scheme for triangle counting.
\newblock In {\em Proceedings of the Twenty-Eighth Annual ACM-SIAM Symposium on
  Discrete Algorithms}, SODA 2017, page 1778–1797.

\bibitem[Kane et~al., 2012]{Streaming:kane-arbitrary-subgraphs}
Kane, D.~M., Mehlhorn, K., Sauerwald, T., and Sun, H. (2012).
\newblock Counting arbitrary subgraphs in data streams.
\newblock In {\em International Colloquium on Automata, Languages, and
  Programming}, ICALP 2012, pages 598--609.

\bibitem[Karrer and Newman, 2011]{karrer-newman-dcsbm}
Karrer, B. and Newman, M. E.~J. (2011).
\newblock Stochastic blockmodels and community structure in networks.
\newblock {\em Physical Review E}, 83(016107):1--24.

\bibitem[Kasy, 2018]{kasy-uniform-delta-method}
Kasy, M. (2018).
\newblock Uniformity and the delta method.
\newblock {\em Journal of Econometric Methods}, 8(1).

\bibitem[Klopp et~al., 2017]{klopp-oracle-inequalities}
Klopp, O., Tsybakov, A., and Verzelen, N. (2017).
\newblock Oracle inequalities for network models and sparse graphon estimation.
\newblock {\em Annals of Statistics}, 45:316--354.

\bibitem[Koltchinskii and Gin\'{e}, 2001]{koltchinksii-gine-kernel-operator}
Koltchinskii, V. and Gin\'{e}, E. (2001).
\newblock Random matrix approximation of spectra of integral operators.
\newblock {\em Bernoulli}, 6(1):1--24.

\bibitem[Lei, 2018]{lei-graph-root}
Lei, J. (2018).
\newblock Network representation using graph root distributions.
\newblock {\em ArXiv e-prints}.

\bibitem[Levin and Levina, 2019]{levin-levina-bootstrap-network-structure}
Levin, K. and Levina, E. (2019).
\newblock Bootstrapping networks with latent space structure.
\newblock {\em ArXiv e-prints}.

\bibitem[Lov\'{a}sz, 2012]{Lovacz-Graph-Limits}
Lov\'{a}sz, L. (2012).
\newblock {\em Large Networks and Graph Limits}.
\newblock American Mathematical Society Colloquium Publications,
  Providence,Rhode Island.

\bibitem[McGregor et~al., 2016]{Streaming:mcgregor-better-triangles}
McGregor, A., Vorotnikova, S., and Vu, H.~T. (2016).
\newblock Better algorithms for counting triangles in data streams.
\newblock In {\em Proceedings of PODS}, page 401–411.

\bibitem[McGregor et~al., 2019]{Kallaugher:2019:CCC:3294052.3319706}
McGregor, J. K.~A., Price, E., and Vorotnikova, S. (2019).
\newblock The complexity of counting cycles in the adjacency list streaming
  model.
\newblock In {\em Proceedings of the 38th ACM SIGMOD-SIGACT-SIGAI Symposium on
  Principles of Database Systems}, PODS '19, pages 119--133, New York, NY, USA.
  ACM.

\bibitem[Orbanz, 2017]{orbanz-subsampling-large-graphs}
Orbanz, P. (2017).
\newblock Subsampling large graphs and invariance in networks.
\newblock {\em ArXiv e-prints}.

\bibitem[O'Rourke et~al., 2018]{orourke-perturbation-bounds}
O'Rourke, S., Vu, V., and Wang, K. (2018).
\newblock Random perturbation of low rank matrices: Improving classical bounds.
\newblock {\em Linear Algebra and its Applications}, 540:26--59.

\bibitem[Politis et~al., 1999]{Politis-Romano-Wolf-subsampling}
Politis, D.~N., Romano, J.~P., and Wolf, M. (1999).
\newblock {\em Subsampling}.
\newblock Springer, New York.

\bibitem[Rastelli et~al., 2016]{rastrelli-properties-latent-network-models}
Rastelli, R., Friel, N., and Raftery, A.~E. (2016).
\newblock Properties of latent variable network models.
\newblock {\em Network Science}, 4:407--432.

\bibitem[Romano and Shaikh, 2012]{Romano-Shaikh-Uniform}
Romano, J.~P. and Shaikh, A.~M. (2012).
\newblock On the uniform asymptotic validity of subsampling and the bootstrap.
\newblock {\em Annals of Statistics}, 40:2798--2822.

\bibitem[Rubin-Delanchy et~al., 2018]{generalized-rdpg}
Rubin-Delanchy, P., Priebe, C.~E., Tang, M., and Cape, J. (2018).
\newblock A statistical interpretation of spectral embedding: the generalised
  random dot product graph.
\newblock {\em ArXiv e-prints}.

\bibitem[Serfling, 1980]{Serfling-Approximation-Theorems}
Serfling, R.~J. (1980).
\newblock {\em Approximation Theorems of Mathematical Statistics}.
\newblock Wiley, New York.

\bibitem[Stein and Shakarchi, 2009]{Stein-Shakarchi-Analysis}
Stein, E.~M. and Shakarchi, R. (2009).
\newblock {\em Real Analysis: Measure Theory, Integration, and {Hilbert}
  Spaces}.
\newblock Princeton University Press, Princeton,NJ.

\bibitem[Tang et~al., 2017]{tang-nonparametric-two-sample-test}
Tang, M., Athreya, A., Sussman, D.~L., Lyzinski, V., and Priebe, C.~E. (2017).
\newblock A nonparametric two-sample hypothesis testing problem for random
  graphs.
\newblock {\em Bernoulli}, 23:1599--1630.

\bibitem[Traud et~al., 2012]{traud-social-networks-facebook}
Traud, A.~L., Kelsic, E.~D., Mucha, P.~J., and Porter, M.~A. (2012).
\newblock Social structure of {F}acebook networks.
\newblock {\em Physica A: Statistical Mechanics and its Applications},
  391:4165--4180.

\bibitem[Tsourakakis et~al., 2009]{Streaming:doulion}
Tsourakakis, C.~E., Kang, U., Miller, G.~L., and Faloutsos, C. (2009).
\newblock Doulion: Counting triangles in massive graphs with a coin.
\newblock In {\em Proceedings of the 15th ACM SIGKDD International Conference
  on Knowledge Discovery and Data Mining}.

\bibitem[van Zwet, 1984]{Van-zwet-berry-esseen-symmetric}
van Zwet, W.~R. (1984).
\newblock A {Berry-Esseen} bound for symmetric statistics.
\newblock {\em Zeitschrift f\"{u}r Wahrscheinlichkeitstheorie und Verwandte
  Gebiete}, 66:425--440.

\bibitem[Veitch and Roy, 2015]{veitch-roy-graphex}
Veitch, V. and Roy, D.~P. (2015).
\newblock The class of random graphs arising from exchangeable random measures.
\newblock {\em ArXiv e-prints}.

\bibitem[Veitch and Roy, 2019]{Veitch-Roy-Sampling-Graphs}
Veitch, V. and Roy, D.~P. (2019).
\newblock Sampling and estimation for (sparse) exchangeable random graphs.
\newblock {\em Annals of Statistics (forthcoming)}.

\bibitem[Vu, 2007]{Vu-Spectral-Norm}
Vu, V. (2007).
\newblock Spectral norm of random matrices.
\newblock {\em Combinatorica}, 27:721--736.

\bibitem[Wong and Tewari, 2017]{chong-lasso-heavy-tailed}
Wong, K.~C. and Tewari, A. (2017).
\newblock Lasso guarantees for $\beta$-mixing heavy tailed time series.
\newblock {\em ArXiv e-prints}.

\bibitem[Young and Scheinerman, 2007]{young-schneiderman-rdpg}
Young, S.~J. and Scheinerman, E.~R. (2007).
\newblock Random dot product graph models for social networks.
\newblock In {\em Proceedings of the 5th international conference on algorithms
  and models for the web-graph}, pages 138--149.

\end{thebibliography}

\appendix
\addcontentsline{toc}{section}{Appendices}
\renewcommand{\thesubsection}{\Alph{section}\arabic{subsection}}
\section{Proofs for Section 2}
\subsection{Proof of Theorem \ref{subsampling-fixed-b}}
\label{section:appendix-vertex-subsampling-proof}

Define the following quantity:
\begin{align}
U_{n,b}(t,P_n) =  \frac{1}{N}\sum_{i=1}^{N} \mathbbm{1}\left( \tau_b[\hat{\theta}_{n,b,i}(G_n) - \theta(G_\infty)] \leq t \right)
\end{align}
It suffices to show that $U_{n,b}(t,P_n) \xrightarrow{P} J(t,P_\infty)$ at the continuity points of $J(\cdot, P_\infty)$.
 Let $f(j_1, j_2, \ldots, j_b) =\mathbbm{1}\left( \tau_b[\hat{\theta}_{n,b,i}(G_n) - \theta(G_\infty)] \leq t \right)$ denote the kernel corresponding to subgraph induced by the nodes $(j_1, j_2, \ldots j_b )$. Furthermore, let $k = \lceil \frac{n}{b} \rceil$ and define:
  \begin{align}
  \label{independent-sum-representation}
  F(j_1, \ldots, j_n) = \frac{f(j_1, \ldots, j_b) + f(j_{b+1}, \ldots, j_{2b}) + \ldots + f(j_{(k-1)b +1}, \ldots, j_{kb})}{k}
  \end{align}

  Letting $\sum_{\sigma}$ denote summation over all $n!$ permutations $(j_1, \ldots, j_n)$ of $\{1 ,\ldots, n \}$, we may use the Hoeffding representation to express $U_{n,b}$ as:
  \begin{align}
  \label{hoeffding-representation}
  U_{n,b} = \frac{1}{n!} \sum_\sigma F(\sigma(j_1), \ldots ,\sigma(j_n))
  \end{align}

  Now, by independence of induced subgraphs with disjoint node sets, we have that $F(\sigma(j_1), \ldots, \sigma(j_n))$ is a sum of independent random variables.  Therefore, using analogous reasoning to the proof for Hoeffding's inequality for U-Statistics (c.f. \citet{Serfling-Approximation-Theorems}, Theorem A p.201):
  \begin{align*}
  P\left( \bigl| U_{n,b}(t,P_n) -\mathbb{E}[U_{n,b}(t,P_n)] \big|   >  \epsilon \right) \leq 2 \exp\left(\frac{-C n\epsilon^2}{b} \right)
 \end{align*}
Since $\mathbb{E}[U_{n,b}(t,P_n)] = J_{n,b}(t,P_n)$ and $J_{n,b}(t,P_n) \rightarrow J(t,P_\infty)$, part $\bf{i.}$ follows. Parts $\bf{ii}. - \ \bf{iii}$ follow based on arguments analogous to \citet{Politis-Romano-Wolf-subsampling}. \qed

\subsection{Proof of Theorem \ref{p-subsampling-theorem}}
\label{section:appendix-p-subsampling-proof}
We will again show only $\mathbf{i}$.  Define the following functional:
\begin{align}
V_{n,B}(t,P_n) =  \frac{1}{M}\sum_{i=1}^{M} \mathbbm{1}\left( \tau_{B_i}[\hat{\theta}_{n,B_i}\bigl(\mathrm{Smpl}_{p,i}(G_n)\bigr) - \theta(G_\infty)] \leq t \right)
\end{align}

Note that the normalization parameter $\tau_{B_i}$ is now random.  We will show that it still suffices to control $V_{n,B}(t,P_n)$. Observe that:  
\begin{align*}
\label{subsampling-estimator-decomposition}
L_{n,B}^\prime(t,P_n) =  \frac{1}{M}\sum_{i=1}^{M} \mathbbm{1}\left( \tau_{B_i}\left[\hat{\theta}_{n,B_i}\bigl(\mathrm{Smpl}_{p,i}(G_n)\bigr) - \theta(G_\infty)\right] - \tau_{B_i}[\hat{\theta}_n(G_n)-\theta(G_\infty)] \leq t \right)
\end{align*}
For some $\epsilon > 0$, define the random variable:
\begin{align}
\chi_{n,i} = \mathbbm{1}\left( \tau_{B_i} \left|\hat{\theta}_n(G_n) - \theta(G_\infty)\right| > \epsilon \right)
\end{align}
 For any $\delta > 0$, let $A_n$ denote the corresponding event:
\begin{align}
A_n = \left \{ \frac{1}{M}\sum_{i=1}^M \chi_{n,i}> \delta    \right\}
\end{align} 
For our purposes, we will see shortly that $\epsilon$ must be chosen sufficiently small for each $\delta >0$.  We will show that $P(A_n) \rightarrow 0$ for any $\epsilon >0 $ and $\delta >0$.

Now, since $0 \leq B_i \leq \sum_{j=1}^n X_{ij}^{(n)}$ and $\frac{1}{n}\sum_{j=1}^n X_{ij}^{(n)} \xrightarrow{P} 0$, it follows that $B_i/n \xrightarrow{P} 0$.  Therefore, by continuous mapping theorem, $\tau_{B_i}/\tau_n \xrightarrow{P} 0$.  By Slutsky's theorem, we have that $\mathbb{E}[\chi_{n,i}] \rightarrow 0$ and thus, $P(A_n) \rightarrow 0$ for any $\delta >0$ and $\epsilon >0$. We will now bound $L_{n,B}^\prime(t,P_n)$ on $A_n^c$. 

Now fix a summand of $L_{n,B}^\prime(t,P_n)$.  If $\tau_{B_i} \left|\hat{\theta}_n(G_n) - \theta(G_\infty)\right| \leq \epsilon$, then:
\begin{align*}
 & \ \ \ \ \mathbbm{1}\left( \tau_{B_i}[\hat{\theta}_{n,B_i}\bigl(\mathrm{Smpl}_{p,i}(G_n)\bigr) - \hat{\theta}_n(G_n)] \leq t \right)\\
 & \leq \mathbbm{1}\left( \tau_{B_i}[\hat{\theta}_{n,B_i}\bigl(\mathrm{Smpl}_{p,i}(G_n)\bigr) - \theta(G_\infty)] - \epsilon \leq t \right) 
\end{align*}
Otherwise, since the summand is at most one, we may upper bound it by $\chi_{n,i}$ when the above condition is not satisfied. Therefore, the summand is upper-bounded by the sum of these two terms.  We may lower bound the summand in an analogous manner.  Therefore, it follows that:  
\begin{align*}
V_{n,B}(t-\epsilon,P_n) - \frac{1}{M}\sum_{i=1}^M \chi_{n,i}  \leq  L_{n,B}^\prime(t,P_n) \leq V_{n,B}(t+\epsilon,P_n) + \frac{1}{M}\sum_{i=1}^M \chi_{n,i}
\end{align*}
 For any $\delta > 0$, $\epsilon$ can be chosen so that $|J(t \pm \epsilon,P_\infty) - J(t,P_\infty)| < \delta$.  Furthermore, $\epsilon$ can be chosen so that $t \pm \epsilon$ are continuity points of $J(\cdot, P_\infty)$.  Therefore, if $V_{n,B}(t,P_n) \xrightarrow{P} J(t,P_\infty)$, with probability tending to $1$:
\begin{align*}
\label{upper-lower-bound-p-sampling}
\begin{split}
& \ \ J(t-\epsilon,P_\infty) - 2\delta \leq L_{n,B}^\prime(t,P_n) \leq J(t+\epsilon,P_\infty) + 2\delta
\\ & \implies J(t,P_\infty) - 3 \delta \leq L_{n,B}^\prime(t,P_n) \leq J(t,P_\infty) + 3 \delta
\end{split}
\end{align*}

\noindent Hence, it suffices to show $V_{n,B}(t,P_n) \xrightarrow{P} J(t,P_\infty)$ to establish $\mathbf{i}$.  We will now approximate $V_{n,B}(t, P_n)$ with a suitable linear combination of U-statistics.  Let $Y_i^{(n)} = \sum_{j=1}^n X_{ij} \sim \mathrm{Binomial}(n,p_n)$ and $q_j = P(Y_1^{(n)} = j)$.  Furthermore, let $N_j = {n \choose j}$. Let $\{G_{n,j,i}^* \}_{ 1 \leq i \leq N_j}$ represent subgraphs formed by taking induced subgraphs with $j$ vertices, arranged in any order, with isolated vertices removed.  Further let $j^* = |\mathcal{V}(G_{n,j,i}^*)|$; note that $\hat{\theta}_{n,j^*,i}(\cdot)$ is itself now a random function through $j^*$. Observe that:
\begin{align*}
\mathbb{E}[V_{n,B}(t,P_n) \ | \ G_n ] = \sum_{j=0}^n q_j \left( \frac{1}{N_j} \sum_{i=1}^{N_j} \mathbbm{1}\left( \tau_{j^*}[\hat{\theta}_{n,j^*,i}( G_{n,j,i}^*) - \theta(G_\infty)] \leq t \right) \right)
\end{align*}
Now conditional on $G_n$, $V_{n,b}(t,P_n)$ is a sum of independent random variables bounded by 1.  By Hoeffding's inequality, since $M \rightarrow \infty$:
\begin{align*}
P\left( \left|V_{n,b}(t,P_n) - \mathbb{E}[V_{n,b}(t,P_n) \ | \ G_n] \right| > \epsilon \ | \ G_n  \right) \leq 2 \exp( -2 M \epsilon^2 ) \rightarrow 0
\end{align*}
Therefore,  $ V_{n,B}(t,P_n) -\mathbb{E}[V_{n,B}(t,P_n) \ | \ G_n ] \xrightarrow{P} 0$ unconditionally.

Now, we will further approximate $\mathbb{E}[V_{n,B}(t,P_n) \ | \ G_n ]$ with another linear combination of U-statistics that excludes p-subsample sizes that are unlikely to be chosen.  This will allow us to show that $\mathbb{E}[V_{n,B}(t,P_n) \ | \ G_n ]$ concentrates around its expectation under the assumed conditions.

Recall that $C_n = \{ l_n, l_n +1, \ldots, u_n \}$, where $c_n = 3 \sqrt{n p_n \log n}$,  $l_n = \lfloor np_n - c_n \rfloor$ and $u_n = \lceil np_n + c_n \rceil$. By the Chernoff bound in (\ref{chernoff-binomial-bound}), we have that $P(Y_1^{(n)} \not\in C_n) \rightarrow 0$. Now define:
\begin{align}
\overline{U}_{n}(t,P_n) :=  \sum_{j \in C_n } q_j \left( \frac{1}{N_j} \sum_{i=1}^{N_j} \mathbbm{1}\left( \tau_{j^*}[\hat{\theta}_{n,j^*,i}( G_{n,j,i}^*) - \theta(G_\infty)] \leq t \right) \right) 
\end{align}    

Since each U-statistic is bounded by $1$ and $ \sum_{j \not \in C_n} q_j \rightarrow 0$, we have that $\overline{U}_{n}(t,P_n) - \mathbb{E}[V_{n,B}(t,P_n) \ | \ G_n ] \rightarrow 0$. Now let:
\begin{align}
U_{n,j}(t,P_n) := \frac{1}{N_j} \sum_{i=1}^{N_j} \mathbbm{1}\left( \tau_{j^*}[\hat{\theta}_{n,j^*,i}( G_{n,j,i}^*) - \theta(G_\infty)] \leq t \right)
\end{align}
Let $\tilde{q}_j=q_j/\sum_{k\in C_n}q_k$ and $\tilde{a}=a/\sum_{k\in C_n}q_k$. 
Applying Markov's inequality on the moment generating function, for any $s,a >0$, and some constant $C>0$, we have that:
\begin{align*}
P\left( \overline{U}_{n} -  \mathbb{E}[\overline{U}_{n}]  > a \right) 
& \leq \exp\left(-s\tilde{a}\right) \cdot \mathbb{E}\left[ \exp\left(  s\sum_{j \in C_n} \tilde{q}_j (U_{n,j}-\mathbb{E}[U_{n,j}]) \right)  \right] \\
&\stackrel{(i)}{\leq} \exp\left(-sa\right) \sum_{j\in C_n}\tilde{q}_j \mathbb{E}\left[ \exp\left(  s (U_{n,j}-\mathbb{E}[U_{n,j}]) \right)  \right] \\ &\stackrel{(ii)}{\leq}  \exp\left(\frac{-Cna^2}{u_n} \right)
\stackrel{(iii)}{=} o(1)
\end{align*}
Step $(i)$ follows from Jensen's inequality since $\sum_{j\in C_n}\tilde{q}_j=1$. Step $(ii)$ follows from Hoeffding's inequality for each $U_{n,j}$. Step $(iii)$ holds since $u_n = o(n)$. The argument for the lower tail is analogous.

%
Now, the result will follow if we can show that $\mathbb{E}[\overline{U}_{n}(t,P_n)] - J(t, P_\infty) \rightarrow 0$. Let $\mathcal{E}_j$ denote the event that none of the nodes of a random graph with $j$ vertices are isolated.  Since $w(\xi_i,\xi_j) \geq c \ a.s.$,  standard arguments for Erd\H{o}s-R\`{e}nyi graphs can be used to conclude that the probability that the graph contains an isolated node, i.e. $P(\mathcal{E}_j^c)$, goes to $0$ so long as $\rho_n = \omega(\log l_n / l_n)$.  

We will now derive an upper bound.  For each $j$, by law of total probability partitioning on $\{\mathcal{E}_j,\mathcal{E}_j^c \}$, observe that:
\begin{align*}
& \ \ \ \sum_{j \in C_n} q_j P( \tau_{j^*}[\hat{\theta}_{n,j^*,i}( G_{n,j,i}^*) - \theta(G_\infty)]  \leq t) - J (t, P_\infty)  
\\ & \leq \sum_{j \in C_n} q_j \left[J_{n,j}(t,P_n) - J(t,P_\infty) \right]  +  \sum_{j \in C_n} q_j P(\mathcal{E}_j^c) - \bigl(1- \sum_{j \in C_n} q_j\bigr) \cdot J(t,P_\infty)  
\end{align*}
Since $\sup_{j \in C_n}\left| J_{n,j}(t,P_n) - J(t,P_\infty) \right| \rightarrow 0$ and $\sum_{j \in C_n} q_j < 1$, the first term may be upper bounded by $\epsilon/2$ for any $\epsilon >0$.  By similar reasoning, the second term is bounded by $\epsilon/2$.  The last term may be upper bounded by zero. For the lower bound, observe that, by inclusion-exclusion principle:
\begin{align*}
& \ \ \ \sum_{j \in C_n} q_j P( \tau_{B_i}[\hat{\theta}_{n,j^*,i}( G_{n,j^*,i}^*) - \theta(G_\infty)]  \leq t) - J (t, P_\infty)  
\\ & \geq \sum_{j \in C_n} q_j \left[ J_{n,j}(t,P_n) - J(t,P_\infty) \right]+  \sum_{j \in C_n} q_j [P(\mathcal{E}_j) -1]- \bigl(1- \sum_{j \in C_n} q_j\bigr) \cdot J(t,P_\infty)    
\end{align*} 
Now, the lower bound may be shown by an analogous argument to the upper bound, and the result follows. \qed
 \section{Proofs for Section 3}
\subsection{Proof of Theorem \ref{eigenvalue-weak-convergence}}
\label{section:appendix-eigenvalue-limit-proof}
Consider the following decomposition:
\begin{align}
\begin{split}
\label{eigenvalue_decomposition}
Z_{n,r} &= \lambda_{r}(A^{(n)}/\sqrt{n}\rho_n) - \lambda_{r}(P^{(n)}/\sqrt{n}\rho_n) 
\\ &  \ \ \ + \lambda_{r}(P^{(n)}/\sqrt{n}\rho_n)- \lambda_{r}(W^{(n)})/\sqrt{n})
\\ & \ \ \ + \sqrt{n}[ \lambda_{r}(W^{(n)}/n)- \lambda_{r}(w)]
\end{split}
\end{align}
where $P_{ij}^{(n)} = \mathbb{E}[A_{ij}^{(n)} \ | \ \boldsymbol{\xi}_n] = \rho_n w(\xi_i, \xi_j) \wedge 1$ for $i \neq j$ and $P_{ii}^{(n)} = 0$.  Similarly, $W_{ij}^{(n)} = w(\xi_i, \xi_j)$ and for $i \neq j$ and $W_{ii}^{(n)} = 0$. 
By Slutsky's Theorem, it suffices to show that one of the terms of $Z_n$ weakly converges while the other terms converge in probability to a point mass.  The first term is a noise term that arises from randomly perturbing the mean matrix.  The second term is also a nuisance term, which arises from truncating $W^{(n)}/\sqrt{n}$.  The third term, which involves an approximation of the eigenvalues of the graphon operator with the eigenvalues of $W^{(n)}/\sqrt{n}$, will determine the limiting distribution.        

 For the third term of (\ref{eigenvalue_decomposition}), under the assumptions in the theorem, we may apply Corollary 5.8 of \citet{koltchinksii-gine-kernel-operator} to establish the existence of a limiting distribution. For A, note that $\norm{w(u,v)}_\infty < \infty \ a.s.$ together with $\lambda_r \neq 0$ implies (\ref{eigenfunction-condition}); see Proposition 7.17 of \citet{Lovacz-Graph-Limits}. 

Now, for the second term of (\ref{eigenvalue_decomposition}), observe that case A is trivial since if $\rho_n \rightarrow 0$, for $n$ large enough, we have that $W_{ij}^{(n)} = P_{ij}^{(n)}/\rho_n \ a.s.$  We will now deal with cases B and C. By Weyl's inequality, we have that:
\begin{align*}
& \ \ \ P\left(\max_{1 \leq r \leq k} \left|\lambda_{r}(P^{(n)}/\sqrt{n}\rho_n)- \lambda_{r}(W^{(n)})/\sqrt{n}) \right| > \epsilon \right)
\\ &\leq  P\left( \norm{P^{(n)}/\sqrt{n}\rho_n- W^{(n)}/\sqrt{n}}_{op} > \epsilon \right)
\end{align*}
Now, let $\norm{\cdot}_{F}$ denote the Frobenius norm of matrix. It is well-known that $\norm{A}_{op} \leq \norm{A}_{F}$.  Now, we have that:
\begin{align}
\label{bounding-censored-eigenvalue-perturbation}
\begin{split}
& \ \ \ P\left( \norm{P^{(n)}/\sqrt{n}\rho_n- W^{(n)}/\sqrt{n}}_{op} > \epsilon \right)
\\ &\leq P\biggl(\sqrt{\sum_{i=1}^n \sum_{j=1}^n \frac{w^2(\xi_i, \xi_j)}{n} \mathbbm{1}(\rho_n w(\xi_i, \xi_j) > 1) } > \epsilon \biggr)
\\ &\leq n\epsilon^{-2} \ \mathbb{E}\left[ w^2(\xi_i, \xi_j)  \mathbbm{1}\left( w(\xi_i, \xi_j) > \rho_n^{-1} \right) \right]
\end{split}
\end{align}
Now for case C, for $s > 2$ consider the bound:
\begin{align*}
n\epsilon^{-2}\ \mathbb{E}\left[ w^2(\xi_i, \xi_j)  \mathbbm{1}\left( w(\xi_i, \xi_j) > \rho_n^{-1} \right) \right] &\leq n\epsilon^{-2} \ \mathbb{E}\left[ w^2(\xi_i, \xi_j) \left(\frac{w(\xi_i, \xi_j)}{\rho_n^{-1}}\right)^{s-2} \right]
\\ &=\epsilon^{-2} \mathbb{E}\left[ w^s(\xi_i, \xi_j)\right] n \rho_n^{s-2} 
\end{align*}
For case B, we will exploit the fact that sub-Weibull distributions have exponential tails. If $w(\xi_i, \xi_j) - \mathbb{E}[w(\xi_i, \xi_j)]$ is sub-Weibull$(K_1, \lambda)$,then there exists $K_2$ differing from $K_1$ by at most a constant factor~\citep{chong-lasso-heavy-tailed} such that:
\begin{align}
P\left( \left|w(\xi_i, \xi_j) - \mathbb{E}[w(\xi_i, \xi_j)] \right| > t \right) \leq \exp\left( -(t/K_2)^\lambda \right)
\end{align}
Now for case B, we may bound (\ref{bounding-censored-eigenvalue-perturbation}) using the Cauchy-Schwartz inequality:
\begin{align*}
n \epsilon^{-2} \ \mathbb{E}\left[ w^2(\xi_i, \xi_j)  \mathbbm{1}\left( w(\xi_i, \xi_j) > \rho_n^{-1} \right) \right] &\leq n \epsilon^{-2} \sqrt{\mathbb{E}\left[ w^4(\xi_i, \xi_j) \right] P( w(\xi_i, \xi_j) > \rho_n^{-1}) } 
\end{align*} 
The sub-Weibull condition also ensures existence of moments; therefore, for both cases B and C the second term of (\ref{eigenvalue_decomposition}) converges in probability to 0 under the assumed conditions.

What remains is showing that the first term converges in probability to $0$.  It turns out that Weyl's inequality is too weak to yield the desired concentration.  We will therefore use tools developed by \citet{eldridge-beyond-davis-kahan} for establishing concentration. Now observe that:
\begin{align*}
& \ \ \ \  \ \ \ P\left(\sqrt{n} \bigl|\lambda_{r}(A^{(n)}/n\rho_n) - \lambda_{r}(P^{(n)}/n\rho_n)\bigr| > \epsilon \right)
\\ & \ \ \ \leq \sup_{\omega \in E_\delta} P\left(\sqrt{n} \bigl|\lambda_{r}(A^{(n)}/n\rho_n) - \lambda_{r}(P^{(n)}/n\rho_n)\bigr| > \epsilon \ \biggr\rvert \ \boldsymbol{\xi}_n(\omega) \ \right) + P(E_{\delta}^c)
\end{align*}
where $E_\delta$ satisfies $P(E_\delta) \geq 1 - \delta$.  To establish convergence in probability, it now suffices to show that the first term converges to zero for any $ 0 < \delta < 1$.

 We will break this part of the proof into several steps. \\ \\ 
\noindent\textbf{Step 1: Derive a bound on the spectral norm of} $(A-P)/\rho_n$.

\noindent For notational convenience, let $H = (A-P)/\rho_n$. Conditioning on $\boldsymbol{\xi}_n$, $H$ is a mean $0$ random variable with independent entries.  We also have that:
\begin{align*}
\begin{split}
\label{crude-variance-bound}
\text{Var}\left( H_{ij} \ | \ \boldsymbol{\xi}_n \right) &= \rho_n^{-1} w(\xi_i, \xi_j) \wedge \rho_n^{-2}  - w^2(\xi_i, \xi_j) \wedge \rho_n^{-2}
\end{split}
\end{align*}
We may now derive bounds for the variance under the assumptions A,B,C. We will start with A; for this case, observe that there exists a constant $D <\infty$ such that $\text{Var}( H_{ij}) \leq D\rho_n^{-1}$. For cases B and C, define the event:
\begin{align}
\mathcal{B}_n = \left\{\max_{1 \leq i < j \leq n} w(\xi_i,\xi_j) \leq M_n \right\}
\end{align} 
For both cases, we will choose $M_n$ so that $P(\mathcal{B}_n) \rightarrow 1$.  For case B, by Markov's inequality, the choice $M_n = n^{(2+\delta)/s}$ yields $P(\mathcal{B}_n) \geq 1 - 1/n^{\delta}$ for some $\delta > 0$.  Note that this bound is worse than the trivial bound $\rho_n^{-2}$ when $s<4$ and $\rho_n = \omega(1/\sqrt{n})$.  For case C, the choice $M_n =  (3K_2 \log n)^{1/\lambda}  + \mathbb{E}[w(\xi_i, \xi_j)]$ yields $P(\mathcal{B}_n) \geq 1 - 1/n$.  It then follows that:
\begin{align}
\sup_{\omega \in \mathcal{B}_n} \max_{1 \leq i < j \leq n} \mathrm{Var}\left( H_{ij} \ | \ \boldsymbol{\xi}_n(\omega) \right) \leq  M_n \rho_n^{-1}
\end{align}   

In all three cases, notice that $|H_{ij}| \leq \rho_n^{-1}$. Now, we may derive a on $\norm{H}_{op}$ using a result from random matrix theory.  In what follows, we say that a sequence of events $\{E_n\}_{n \in \mathbb{N}}$ holds w.h.p. uniformly over some set $\mathcal{B}_n$ if $\inf_{\omega \in \mathcal{B}_n } P( E_n \ | \ \boldsymbol{\xi}_n(\omega)) \rightarrow 1$.  Applying Theorem 1.4 of \citet{Vu-Spectral-Norm}, we have that, conditional on $\boldsymbol{\xi}_n$, there exists some $C < \infty$ such that:
\begin{itemize}
\item Case A: $\norm{H}_{op} \leq C\sqrt{n/\rho_n}$ w.h.p. uniformly on $\Omega$. 
\item Case B: $\norm{H}_{op} \leq C n^{1/2+\delta}/\sqrt{\rho_n}$ w.h.p. uniformly on $\mathcal{B}_n$.  
\item Case C: $\norm{H}_{op} \leq Cn^{(2+s+\delta)/2s}/\sqrt{\rho_n}$ w.h.p. uniformly on $\mathcal{B}_n$.
\end{itemize}    
Uniformity follows from the fact the above result is based on Markov's inequality, with constants depending only on $K$ and $\sigma$. While $K$ and $\sigma$ are changing with $n$, the requirement that $\sigma \geq C^\prime n^{-1/2} K \ln^2 n$ from the above reference holds for $n$ large enough for any $ 0 < C^\prime < \infty$ with $\rho_n = \omega(1/\sqrt{n})$. \\ \\
\noindent \textbf{Step 2: For a fixed unit vector $x \in \mathbb{R}^n$, derive a bound on} $x^THx$

\noindent 
Let $\mathcal{A}_n$ denote the event:
\begin{align}
\mathcal{A}_n = \left\{x^T H x \leq \epsilon_n  \right\}  
\end{align}
for some $\epsilon_n$ to be chosen shortly. By Lemma 2 \citep{eldridge-beyond-davis-kahan}, we have that, for some universal constant $K_1 > 0$:
\begin{align*}
P(x^THx > a \ | \ \boldsymbol{\xi}_n) \leq \exp\left(\frac{-K_1 a^2}{ \max_{i,j}||H_{ij}||_{\psi_2}^2} \right) 
\end{align*}  
where $\norm{\cdot}_{\psi_2}$ is the sub-Gaussian norm of a random variable $X$, given by:
\begin{align}
\norm{X}_{\psi_2} :=  \inf \left\{ t > 0 \ \bigr\rvert \ \mathbb{E}\left[\exp\left(X^2/t^2\right)\right] \leq 2 \right\} 
\end{align}

For cases B and C, $P(\mathcal{A}_n) \geq 1 - \frac{1}{n}$ for the choice $\epsilon_n = C \sqrt{\log n}/\rho_n$ for some $C < \infty$ since $\norm{H_{ij}}_{\psi_2}^2 \leq \frac{1}{4 \rho_n^2}$ by Hoeffding's Lemma.  For case A, we will use a sharper bound for the sub-Gaussian norm, which improves our result by a log factor compared to an analysis with a Hoeffding bound. Following for example \citep{buldygin-sub-gaussian-binary}, we have that:
\begin{align*}
\norm{H_{ij}}_{\psi_2} = \rho_n^{-1}\norm{A_{ij}-P_{ij}}_{\psi_2} = \Theta\left(\frac{1}{ \rho_n \sqrt{|\log \rho_n|}} \right)
\end{align*}
For case A, we have $P(\mathcal{A}_n) \geq 1 - \frac{1}{n}$ for $\epsilon_n = C\rho_n^{-1}\sqrt{\log n / |\log \rho_n|}$ for some $C < \infty$. Observe that $\epsilon_n = o(\sqrt{n})$ for $\rho_n = \omega(1/\sqrt{n})$. \\ \\
\noindent \textbf{Step 3: Use eigenvalue perturbation bounds for concentration}

\noindent Suppose that $T_w$ has $d$ positive eigenvalues; at least one such eigenvalue exists.  We will now establish concentration for $(\lambda_1(A^{(n)}), \ldots , \lambda_d(A^{(n)}))$; the argument for the negative eigenvalues will be analogous.     
We will start with an upper bound.  Let $\mathcal{S}_n$ denote the span generated by the eigenvectors $\{v_1(\omega), \ldots, v_k(\omega) \}$ associated with $\{\lambda_1(P/\rho_n), \ldots,  \lambda_k(P/\rho_n) \}$ and let $h:= \max_{v \in \mathcal{S}_n} v^THv $. By Lemma 5 \citep{eldridge-beyond-davis-kahan}, conditional on each $\boldsymbol{\xi}_n$, $h$ may be bounded by $k$ times the bound on $x^THx$ derived in Step 2. 

Now, Theorem 7 \citep{eldridge-beyond-davis-kahan} yields, for any $1 \leq r \leq p$, if $\lambda_r(P/\rho_n) - \lambda_{d+1} (P/\rho_n) > 2 \norm{H}_{op} + h  $, then: 
\begin{align}
\lambda_r(A/\rho_n) - \lambda_r(P/\rho_n) \leq  h + \frac{\norm{H}_{op}^2}{\lambda_r(P/\rho_n) - \lambda_{d+1}(P/\rho_n) + h - \norm{H}_{op}}  
\end{align} 
We will start by verifying that $\lambda_r(P/\rho_n) - \lambda_{d+1} (P/\rho_n) > 2 \norm{H}_{op} + h$ holds with high probability, conditional on $\boldsymbol{\xi}_n$, uniformly over an appropriate set.   
First, note that since $\mathbb{E}[w^2(u,v)] < \infty$,  Theorem 3.1 of \citet{koltchinksii-gine-kernel-operator} implies that, for all $1 \leq r \leq d$:
\begin{align*}
\begin{split}
\lambda_r(W/n)  \xrightarrow{a.s.} \lambda_r(w), \ \ \ \lambda_{d+1}(W/n)  \xrightarrow{a.s.} 0
\end{split}
\end{align*}
Now, since $n^{1+\delta}\rho_n^{s-2} = o(1)$, the bound from (\ref{bounding-censored-eigenvalue-perturbation}) yields, for some $C < \infty$:
\begin{align*}
P\left(\max_{1 \leq r \leq d+1} \left|\lambda_r(W/n) -  \lambda_r(P/n \rho_n)\right| > \epsilon \right) &\leq \frac{C \ \mathbb{E}[w^{s}(\xi_i,\xi_j)]}{n^{1+\delta}} 
\end{align*}
Now by Borel-Cantelli Lemma, $\lambda_r(W/n) -  \lambda_r(P/n\rho_n) \xrightarrow{a.s.} 0$. By Egorov's theorem (c.f. \citep{Stein-Shakarchi-Analysis} Theorem 4.4 p.33), we may choose a set $\mathcal{C}$ such that $P(\mathcal{C}) \geq 1-\delta/2$ and the convergence of $\lambda_r(P/n \rho_n)$ is uniform over $\mathcal{C}$. Therefore, uniformly on $\mathcal{C}$, $\lambda_r(P/\rho_n)- \lambda_{d+1}(P/\rho_n) = \Theta(n)$ and therefore dominates the other terms.  For case A, let $E_\delta = \mathcal{C}$. For cases B and C, let $E_\delta = \mathcal{B}_n \cap \mathcal{C}$; for $n$ large enough, $P(E_\delta) > 1- \delta$.
Now, plugging in bounds from Step 1 and Step 2, we may argue that for some $K <\infty$:
\begin{align*}
\inf_{\omega \in E_\delta} P\left( \lambda_r(A / \rho_n) -  \lambda_r(P /\rho_n) \leq  h +  \frac{K\norm{H}_{op}^2}{n} \ \biggr\rvert \ \boldsymbol{\xi}(\omega) \right) \rightarrow 1
\end{align*} 
Now for the lower bound, we may use Theorem 6 \citep{eldridge-beyond-davis-kahan} along with the bound from Step 2 to conclude that:
\begin{align*}
\inf_{\omega \in E_\delta} P\left( \lambda_r(A / \rho_n) -  \lambda_r(P / \rho_n) \geq h \ \biggr\rvert \ \boldsymbol{\xi}(\omega) \right) \rightarrow 1  
\end{align*} 

If the spectrum of $T_w$ contains negative eigenvalues, notice that we may argue in analogous fashion due to Theorem 11 \citep{eldridge-beyond-davis-kahan}; however for the negative eigenvalues, the terms in the upper and lower bounds are flipped.      
Now putting together these two bounds we have that $ \lambda_r(A /\rho_n) -  \lambda_r(P /\rho_n) = o_P(\sqrt{n})$ under the stated conditions and the result follows. \qed 

\subsection{Proof of Proposition \ref{estimated-sparsity-parameter}}
\label{section:appendix-sparsity-subsampling-proof}
Consider the following decomposition:
\begin{align*}
\begin{split}
 & \ \ \ \ \mathbbm{1}\left(\sqrt{b}\left[\frac{\lambda_r(A^{(n,b,i)})}{b\hat{\rho}_n} - \frac{\lambda_r(A^{(n)})}{n\hat{\rho}_n}\right] \leq t \right) 
 \\ &= \mathbbm{1}\ \left( \sqrt{b}\left[\frac{\lambda_r(A^{(n,b,i)})}{b\rho_n} - \lambda_r(w)\right] \cdot \frac{\rho_n}{\hat{\rho}_n} -  \sqrt{b}\left[ \frac{\lambda_r(A^{(n)})}{n \rho _n} - \lambda_r(w) \right] \cdot \frac{\rho_n}{\hat{\rho}_n} \ \leq t \right)
 \end{split}
\end{align*}
Define the events:
\begin{align*}
E_n^{(1)} = \left\{ \left| \frac{\rho_n}{\hat{\rho}_n} - 1 \right| \leq \epsilon_1   \right\} , \ \ \   E_n^{(2)} = \left\{   \sqrt{b}\left| \frac{\lambda_r(A^{(n)})}{n \rho _n} - \lambda_r(w) \right| \cdot \frac{\rho_n}{\hat{\rho}_n} \leq \epsilon_2   \right\}
\end{align*}
Now we will treat the cases $t>0$, $t<0$, and $t = 0$ separately.  For $t>0$, we may choose $\epsilon_2$ so that $t-\epsilon_2 > 0$.  Moreover, for any $\epsilon_2 >0$, $P(E_n^{(2)}) \rightarrow 1$. Furthermore, since $t > 0$, we have that, for any $0 < \epsilon_1 < 1$:
\begin{align*}
& \ \ \ \ \mathbbm{1}\ \left( \sqrt{b}\left[\frac{\lambda_r(A^{(n,b,i)})}{b\rho_n} - \lambda_r(w)\right] \cdot \frac{\rho_n}{\hat{\rho}_n} \leq t + \epsilon_2 \right) \mathbbm{1}(E_n^{(1)}) 
\\ &\leq \mathbbm{1}\ \left( \sqrt{b}\left[\frac{\lambda_r(A^{(n,b,i)})}{b\rho_n} - \lambda_r(w)\right] \leq \frac{t + \epsilon_2}{1-\epsilon_1} \right)  
\end{align*}
We can make a similar argument for the lower bound.  Let $\alpha = \frac{t - \epsilon_2}{1+\epsilon_1}$ and $\beta=\frac{t + \epsilon_2}{1-\epsilon_1}$.  For $t>0$, we have the bound: 
\begin{align*}
U_{n,b}\left(\alpha,P_n \right) \leq \hat{L}_{n,b}(t,P_n) \mathbbm{1}(E_n^{(1)} \cap E_n^{(2)}) \leq U_{n,b}\left(\beta,P_n \right)  
\end{align*}
For $t<0$, we have a similar bound, but with $ \alpha = \frac{t - \epsilon_2}{1-\epsilon_1}$ and  $\beta= \frac{t + \epsilon_2}{1+\epsilon_1}$.  For $t=0$, we have $ \alpha= \frac{t - \epsilon_2}{1-\epsilon_1}$ and $ \beta = \frac{t + \epsilon_2}{1-\epsilon_1}$.  In all three cases, we may choose $\epsilon_1$ and $\epsilon_2$ so that $\alpha$ and $\beta$ are themselves continuity points of $J(\cdot,P_\infty)$.  Moreover, they can be chosen so that, for any $\delta >0$, $|J(\cdot,P_\infty) - J(t,P_\infty)| < \delta$ at $\alpha$ and $\beta$. From the proof of Theorem \ref{subsampling-fixed-b}, $U_{n,b}(t,P_n) \xrightarrow{P} J(t,P_\infty)$ under our assumptions.  Therefore,  with probability tending to $1$:
\begin{align*}
J(t,P_\infty) - 2 \delta \leq \hat{L}_{n,b}(t,P_n) \leq  J(t,P_\infty) + 2 \delta
\end{align*}
The result follows. \qed

\begin{center}
\large{Supplement to ``Subsampling Sparse Graphons Under Minimal Assumptions''}
\end{center}






\setcounter{section}{0}
\renewcommand{\thesection}{S\arabic{section}}
\renewcommand{\thesubsection}{S\arabic{section}.\arabic{subsection}}     
\renewcommand{\thetable}{S\arabic{table}}   
\renewcommand{\thefigure}{S\arabic{figure}}
\renewcommand{\theequation}{S.\arabic{equation}}

\section{Proofs for Section \ref{uniform_validity}}
\label{appendix-uniform-validity}

\subsection{Proof of Theorem \ref{uniform-coverage-theorem-vertex}} 
By Lemma A.1 (vii) \citep{Romano-Shaikh-Uniform}, if
\begin{align}
\label{uniform-subsampling-deviation}
P\left(\sup_{t \in \mathbb{R}} | L_{n,b}(t , P_n) - J_n(t, G_n) | \leq \epsilon \right) > 1-\delta
\end{align} 
then it follows that:
\begin{align}
P\left(\tau_n[\hat{\theta}_{n}(G_n) - \theta(G_\infty)] \leq c_{n,b}(1-\alpha) \right) \geq 1 - (\alpha + \epsilon + \delta)
\end{align} 
Therefore, it suffices to show that for any $\epsilon >0$, there exists $N$ such that for all $n > N$ and for any $\delta >0$, the relation (\ref{uniform-subsampling-deviation}) holds uniformly in $\mathcal{P}_n$.  To this end, notice that we may bound the complement of (\ref{uniform-subsampling-deviation}) by:
\begin{align*}
& 1 - \left[ \sup_{P \in \mathcal{P}_n} P\left( \sup_{t \in \mathbb{R}} \left| L_{n,b}(t , P_n) - U_{n,b}(t, P_n)\right| > \epsilon/3 \right) \right.
\\ &\left. \ \ \ \ \ \ \ + \sup_{P \in \mathcal{P}_n} P\left( \sup_{t \in \mathbb{R}} \left| U_{n,b}(t , P_n) - J_{n,b}(t, P_n)\right| > \epsilon/3 \right) \right.
 \\ &\left. \ \ \ \ \ \ \ + \sup_{P \in \mathcal{P}_n} \mathbbm{1}\left(\sup_{t \in \mathbb{R}} \left| J_{n,b}(t,, P_n) -  J_{n}(t , P_n) \right| > \epsilon/3 \right) \  \right]
\end{align*}
It suffices to upper bound the second term; the other terms converge to $0$ by assumption.  Using analogous reasoning to Theorem 2.1 \citep{Romano-Shaikh-Uniform} with the Hoeffding representation used in (\ref{hoeffding-representation}), we have the following bound:
\begin{align*} 
& \ \ \ \ \ \ \sup_{P \in \mathcal{P}_n} P\left( \sup_{t \in \mathbb{R}} \left| U_{n,b}(t , P_n) - J_{n,b}(t, P_n)\right| > \epsilon \right) 
\\ & \leq \frac{1}{\epsilon}\sup_{P \in \mathcal{P}_n} \mathbb{E} \left[\frac{1}{n!} \sum_\sigma \sup_{t \in \mathbb{R}} \left| F(j_{\sigma(1)}, \ldots j_{\sigma(n)};t) - J_{n,b}(t, P_n)\right| \right]
\\ & \leq \frac{1}{\epsilon} \sup_{P \in \mathcal{P}_n} \int_0^1 P\left( \sup_{t \in \mathbb{R}} \left| F(j_{1}, \ldots j_{n};t) - J_{n,b}(t, P_n)\right| > u \right) du \rightarrow 0  
\end{align*}
where the last line follows from the Dvoretzky-Kiefer-Wolfowitz inequality.   \qed

\subsection{Proof of Theorem \ref{uniform-coverage-theorem-p}} 
It suffices to bound:
\begin{align*}
& 1 - \left[ \sup_{P \in \mathcal{P}_n} P\left( \sup_{t \in \mathbb{R}} \left| L_{n,b}^\prime(t , P_n) - V_{n,b}(t, P_n)\right| > \epsilon/3 \right) \right.
\\ &\left. \ \ \ \ \ \ \ + \sup_{P \in \mathcal{P}_n} P\left( \sup_{t \in \mathbb{R}} \left| V_{n,B}(t , P_n) - J(t, P_n)\right| > \epsilon/3 \right) \right.
 \\ &\left. \ \ \ \ \ \ \ + \sup_{P \in \mathcal{P}_n} \mathbbm{1}\left(\sup_{t \in \mathbb{R}} \left| J_{n}(t, P_n) -  J(t , P_n) \right| > \epsilon/3 \right) \  \right]
\end{align*}
What remains is bounding the second term above. Conditional on $G_n$, observe that $V_{n,B}( \cdot , P_n)$ may be interpreted as an empirical CDF of i.i.d. random variables with CDF $\mathbb{E}[V_{n,B}( \cdot , P_n) \ | \ G_n]$.  Therefore, by Dvoretzky-Kiefer-Wolfowitz inequality:
\begin{align*}
P\left( \sup_{t \in \mathbb{R}} \ \bigl| V_{n,B}( t , P_n) -  \mathbb{E}[V_{n,B}( t, P_n) | G_n] \bigr| > \epsilon   \right) \leq \exp(-2M\epsilon^2)
\end{align*}   
Recall the representation:
\begin{align*}
\mathbb{E}[V_{n,B}(t,P_n) \ | \ G_n ] = \sum_{j=0}^n q_j \left(\frac{1}{N_j} \sum_{i=1}^{N_j} \mathbbm{1}\left( \tau_{j^*}[\hat{\theta}_{n,j^*,i}( G_{n,j,i}^*) - \theta(G_\infty)] \leq t \right) \right)
\end{align*}
Let:
\begin{align*}
U_{n,j}(t,P_n) := \frac{1}{N_j} \sum_{i=1}^{N_j} \mathbbm{1}\left( \tau_{j^*}[\hat{\theta}_{n,j^*,i}( G_{n,j,i}^*) - \theta(G_\infty)] \leq t \right)
\end{align*}
Now observe that:
\begin{align*}
& \ \ \ \sup_{P \in \mathcal{P}_n} P\left( \sup_{t \in \mathbb{R}} \left| \mathbb{E}[V_{n,B}(t , P_n) \ | \ G_n ] - J(t, P_n)\right| > \epsilon \right)
\\ &  \ \leq \frac{1}{\epsilon} \sup_{P \in \mathcal{P}_n}  \sum_{j=0}^n q_j\mathbb{E}\left[  \sup_{t \in \mathbb{R}} \ \biggl| U_{n,j}(t,P_n)  - \mathbb{E}[U_{n,j}(t,P_n)] \biggr|\right] 
\\ & \ \ + \frac{1}{\epsilon} \sup_{P \in \mathcal{P}_n}\sum_{j=0}^n q_j \sup_{t \in \mathbb{R}} \left| P\left( \tau_{j^*}[\hat{\theta}_{n,j^*,i}( G_{n,j,i}^*) - \theta(G_\infty)] \leq t \right)- J_{n,j}(t,P_n) \right|
\\ & \ \ +  \frac{1}{\epsilon} \sup_{P \in \mathcal{P}_n}  \sum_{j=0}^n q_j \sup_{t \in \mathbb{R}} \left| J_{n,j}(t,P_n)- J(t, P_\infty)\right|
\\ & \ \ \ = \mathbf{I} +   \mathbf{II} +  \mathbf{III}
\end{align*}
Similar to the proof of Theorem \ref{p-subsampling-theorem}, we will restrict our attention to subsample sizes that are chosen with high probability.  Recall that $C_n = \{ l_n, l_n +1, \ldots, u_n \}$, where $c_n = 3 \sqrt{n p_n \log n}$,  $l_n = \lfloor np_n - c_n \rfloor$ and $u_n = \lceil np_n + c_n \rceil$. Furthermore, let $F^{(j)}(j_1, \ldots j_n)$ be a sum of independent elements analogous to (\ref{independent-sum-representation}), with a kernel of size $j$ corresponding to $\mathbbm{1}\left( \tau_{j^*}[\hat{\theta}_{n,j^*,i}( G_{n,j,i}^*) - \theta(G_\infty)] \leq t \right)$.  For $\mathbf{I}$, we have that:
\begin{align}
\frac{1}{\epsilon} \sum_{j \in C_n} q_j \int_0^1 P\left(\sup_{t \in \mathbb{R}} \left| F^{(j)}(j_1, \ldots j_n ; t) - \mathbb{E}[F^{(j)}(j_1, \ldots j_n ; t)] \right|  > u \right) \ du \rightarrow 0
\end{align}
Since $(1-\sum_{j \in C_n} q_j) \rightarrow 0$, $\mathbf{I} \rightarrow 0$. Now, since deletion of isolated nodes is uniformly negligible, following similar reasoning to the proof of Theorem \ref{p-subsampling-theorem}, $\mathbf{II} \rightarrow 0$.  $\mathbf{III} \rightarrow 0$ by assumption and the fact that  $(1-\sum_{j \in C_n} q_j) \rightarrow 0$.  \qed

\section{Uniform Validity for Counts and Eigenvalues}
 In this section, we will establish conditions under which uniform validity holds for two important classes of functionals: counts and eigenvalues. We will begin with the former. 

\subsection{Uniform Validity for Count Functionals}
\label{uniform-validity-counts}
We will now introduce the count functionals under consideration, which were introduced in  \citet{Bickel-Chen-Levina-method-of-moments}. Let $R$ be a subset of $\{(i,j) \ | \ 1 \leq i \leq j \leq n \}$ and let $G_n(R)$ denote the subgraph induced by the vertices in $R$, which we will denote $\mathcal{V}(R)$.  Furthermore, let $\overline{R} = \{(i,j) \not\in R, i \in \mathcal{V}(R), j \in \mathcal{V}(R) \}$. Let $p= |\mathcal{V}(R)|$ and $e = | \mathcal{E}(R)|$.  The probability that an induced subgraph is exactly equal to $R$ is given by:
\begin{align}
\begin{split}
P(R) &= P(G_n(R) = R)
\\ &= \mathbb{E}\left[\prod_{(i,j) \in R} h_n(\xi_i, \xi_j)  \prod_{(i,j) \in \overline{R}} (1-h_n(\xi_i, \xi_j))\right]
\end{split}
\end{align}
When the graph sequence is sparse, $P(R)$ is uninformative, as $P(R) \rightarrow 0$. Instead,  consider the normalized functional below:
\begin{align}
\tilde{P}(R) = \rho_n^{-e} \ P(R)
\end{align} 
Our estimator of $\tilde{P}(R)$ is given by:
\begin{align}
\hat{P}(R) = \rho_n^{-e} \frac{1}{{n \choose p} \ |\mathrm{Iso}(R)| } \sum_{S \sim R} \mathbbm{1}(S \subseteq G_n)
\end{align}
Below, we will characterize the class of distributions in terms of the pair $(\rho_n, w(u,v))$.  We will use $\mathcal{W}$ to denote a class of graphons, and $\mathcal{S}$ to denote a class of sparsity sequences, with $\mathcal{S}_n$ denoting the set of real numbers formed by evaluating the sequences in $\mathcal{S}$ at $n$.

It turns out that the behavior of $\tilde{P}(R)$ is driven by a U-statistic involving the edge structure of the subgraph.  Our conditions on $\mathcal{W}$ will be defined in terms of this kernel. Let:  
\begin{align}
g(\xi_1, \ldots , \xi_p \ ;  w)  =  \frac{1}{|\mathrm{Iso}(R)|}\sum_{\mathcal{V}(S) = \{1, \ldots,  p \}, \ S \sim R} \  \prod_{(i,j) \in S} w(\xi_i,\xi_j)
\end{align}
For example, in the case of a two-star, for some $S \sim R$ we have that:
\begin{align*}
\prod_{(i,j) \in S} w(\xi_i,\xi_j) = w(\xi_1, \xi_2) w(\xi_1, \xi_3)
\end{align*}
The parameter of interest is given by $\theta(G_\infty) = \lim_{n \rightarrow \infty} \tilde{P}(R) =  \mathbb{E}[g(\xi_1, \ldots, \xi_p)]$ when $\rho_n \rightarrow 0$.  Now, let $g_1(x \ ;w)$ denote the following functional:  
\begin{align}
g_1(x \  ;   w) &= \mathbb{E}[ g(\xi_1, \ldots ,\xi_p \ ;  w)  \ | \ \xi_1=x \ ]   
\end{align}
Suppose that functions in $\mathcal{W}$ satisfy the following conditions:
\begin{align}
\label{lyapunov-condition}
\sup_{w \in \mathcal{W}} \mathbb{E}\left[ w^{2(e+1)}(\xi_i, \xi_j) \right] < \infty,  \ \ \inf_{w \in \mathcal{W}} \mathrm{Var}(g_1(\xi_1  \ ; w) ) > 0
\end{align}

Now suppose that $\mathcal{S}$ is defined so that the following properties hold:
\begin{align}
\label{sparsity-condition}
\lim_{n \rightarrow \infty} \sup_{\rho_n \in \mathcal{S}_n} \rho_n \rightarrow 0, \ \ \ \lim_{n \rightarrow \infty} \inf_{\rho_n \in \mathcal{S}_n} b_n \rho_n  = \infty
\end{align}

We have the following result for vertex subsampling; an analogous theorem holds for $p$-subsampling under some additional assumptions stated in Theorem \ref{uniform-coverage-theorem-p}. While uniform validity holds under mild conditions on $\mathcal{W}$, we would like note that convergence rates depend mostly on $\{ \rho_n \}_{n \in \mathbb{N}}$.   

\begin{proposition}[Uniform Validity of Vertex Subsampling for Counts]
\label{uniform-count-theorem}
Suppose that $R$ is acyclic or a $p$-cycle.  Furthermore, suppose that $\mathcal{W}$ satisfies (\ref{lyapunov-condition}), $\mathcal{S}$ satisfies (\ref{sparsity-condition}), and $b_n  =o(n)$.  Then, conditions (\ref{sampling-dist-convergence}) and (\ref{subsampling-dist-convergence}) are satisfied.  
\end{proposition}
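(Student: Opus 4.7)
The plan is to verify the two hypotheses of Theorem~\ref{uniform-coverage-theorem-vertex}, namely (i) $\sup_{P \in \mathcal{P}_n}\sup_{t}|J_{n,b}(t,P_n)-J_n(t,P_n)| \to 0$ and (ii) $\sup_{P \in \mathcal{P}_n} P(\sup_t |L_{n,b}(t,P_n)-U_{n,b}(t,P_n)|>\epsilon)\to 0$ for every $\epsilon>0$. The natural entry point is a uniform central limit theorem for $\tau_n[\hat{P}(R)-\tilde{P}(R)]$. Using the Hoeffding decomposition developed in \citet{Bickel-Chen-Levina-method-of-moments}, one writes the statistic as the sum of a linear projection $p\, n^{-1}\sum_{i=1}^n[g_1(\xi_i;w)-\E g_1]$, higher-order Hoeffding projections onto the $\xi$'s, and a Bernoulli-noise term arising after conditioning on $\boldsymbol{\xi}$. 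Condition \eqref{lyapunov-condition} bounds the relevant moments of these projections uniformly over $\mathcal{W}$ and keeps $\Var(g_1)$ uniformly away from zero; condition \eqref{sparsity-condition} ensures the Bernoulli-noise piece, whose variance involves negative powers of $n\rho_n$, is absorbed into the remainder. A classical Berry--Esseen bound applied to the linear term (Lyapunov ratio of order $n^{-1/2}$ via H\"older on the $2(e+1)$-th moment) then yields
\[
\sup_{P \in \mathcal{P}_n}\sup_{t \in \R}\bigl|J_n(t,P_n)-\Phi(t/\sigma(P_n))\bigr| = O(n^{-1/2}),
\]
with $\sigma(P_n)^2 = p^2 \Var(g_1(\xi_1;w))$ uniformly bounded above and below.

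The same decomposition applied to $\hat{\theta}_{n,b}$ (same $\rho_n^{-e}$ normalization on a size-$b$ induced subgraph, same population target $\tilde{P}(R)$ which depends only on $w$) produces the same normal limit at rate $O(b^{-1/2})$, valid uniformly under \eqref{sparsity-condition} since $\inf_{\rho_n}b_n\rho_n \to \infty$. The triangle inequality then gives (i). For (ii), I would exploit the identity
\[
L_{n,b}(t,P_n) = U_{n,b}\!\bigl(t+\tau_b[\hat{\theta}_n(G_n)-\theta(G_\infty)],\, P_n\bigr),
\]
so uniform convergence reduces to controlling the random shift and the modulus of $U_{n,b}$. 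Uniform tightness of $\tau_n[\hat{\theta}_n-\theta(G_\infty)]$ (from the uniform CLT above), together with $\tau_b/\tau_n=\sqrt{b/n}=o(1)$, forces $\tau_b[\hat{\theta}_n-\theta(G_\infty)] \xrightarrow{P} 0$ uniformly over $\mathcal{P}_n$. The modulus is handled by combining the Dvoretzky--Kiefer--Wolfowitz step used in the proof of Theorem~\ref{uniform-coverage-theorem-vertex} (which delivers $\sup_t|U_{n,b}-J_{n,b}|\xrightarrow{P}0$ uniformly) with uniform equicontinuity of $\Phi(\cdot/\sigma(P_n))$ on account of the uniform lower bound on $\sigma$.

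The main technical obstacle is propagating uniformity through the Hoeffding decomposition, particularly in the $p$-cycle case, where the $\rho_n^{-e}$ normalization amplifies the Bernoulli-noise contribution. One must track the variance computation so that the resulting bound is expressed purely in terms of $\E[w^{2(e+1)}(\xi_i,\xi_j)]$ and factors vanishing with $(b_n\rho_n)^{-1}$; both are uniformly controlled under \eqref{lyapunov-condition} and \eqref{sparsity-condition}. A secondary subtlety is that the Berry--Esseen constant must not depend on $P_n$, which is why the moment control is imposed directly on $w$ (a non-vanishing quantity) and why the uniform lower bound on $\Var(g_1)$ is indispensable. Once these uniform variance and moment bounds are in place, the remainder of the argument is essentially a uniform repackaging of the pointwise analysis in \citet{Bhattacharyya-subsample-count-features}.
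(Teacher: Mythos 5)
Your skeleton is essentially the paper's: the proposition is proved by verifying the two hypotheses of Theorem \ref{uniform-coverage-theorem-vertex}, running the \citet{Bickel-Chen-Levina-method-of-moments} decomposition of the $\rho_n^{-e}$-normalized count into a dominant $\boldsymbol{\xi}$-U-statistic plus $\rho_n$-dependent remainders, and then handling the random centering in $L_{n,b}$ exactly as you describe: your identity $L_{n,b}(t,P_n)=U_{n,b}\bigl(t+\tau_b[\hat{\theta}_n(G_n)-\theta(G_\infty)],P_n\bigr)$, the uniform smallness of the shift, the DKW step, and ``uniform equicontinuity of $\Phi(\cdot/\sigma(P_n))$'' are the paper's event $A_n$ with $a_n\to 0$ together with the L\'evy-concentration bound $a_n/(\sqrt{2\pi}c)$, made possible by $\inf_{w\in\mathcal{W}}\Var(g_1(\xi_1;w))>0$. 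So for condition (\ref{subsampling-dist-convergence}) your argument is correct and essentially identical to the paper's.

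For condition (\ref{sampling-dist-convergence}) there are two concrete problems. First, the claimed uniform Berry--Esseen rate $O(n^{-1/2})$ is not obtainable under (\ref{lyapunov-condition}) and (\ref{sparsity-condition}): the Bernoulli-noise term and the two censoring corrections (replacing $w\wedge\rho_n^{-1}$ by $w$, and removing the $\prod(1-h_n)$ factors) vanish at rates governed by $(b_n\rho_n)^{-1}$ and $\sup_{\rho_n\in\mathcal{S}_n}\rho_n$, for which (\ref{sparsity-condition}) supplies no quantitative rate; moreover, with only $2(e+1)$ moments on $w$, the Lyapunov ratio of the linear projection $g_1$ is of order $n^{-1/e}$ rather than $n^{-1/2}$ once $e>2$. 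The paper explicitly remarks that Berry--Esseen bounds here require stronger integrability and depend heavily on $\{\rho_n\}$. The fix is to drop the rate entirely: the paper establishes uniformity by proving a triangular-array (Lindeberg) CLT for the dominant Hoeffding projection along an arbitrary sequence $P_n\in\mathcal{P}_n$, showing all remainders are $o_P(1)$ uniformly over $\mathcal{P}_n$, and invoking Polya's theorem; your outline should be restated in that form. Second, you defer exactly the steps where the hypotheses enter: the covariance computation for the conditional-on-$\boldsymbol{\xi}$ noise term, whose contribution from pairs $S_1,S_2$ sharing $c$ vertices and $d$ edges is of order $b_n^{1-c}\rho_n^{-d}$ and vanishes under $\inf b_n\rho_n\to\infty$ only because $d$ is dominated by $c$ for acyclic graphs and $p$-cycles; the removal of the $(1-h_n)$ factors via a U-statistic variance bound of order $\rho_n^{2}/b_n$ using the $2(e+1)$-th moment; and the de-censoring step, which the paper handles not through $(b_n\rho_n)^{-1}$ but through uniform integrability of $\{\max_{(i,j)\in S}w^{2e}(\xi_i,\xi_j)\}_{w\in\mathcal{W}}$ combined with $\sup_{\rho_n\in\mathcal{S}_n}\rho_n\to 0$. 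Flagging these as ``the main technical obstacle'' is accurate, but without carrying them out the proposal does not yet establish (\ref{sampling-dist-convergence}).
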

\begin{proof}

For the sequence $\{b_n\}_{n \in \mathbb{N}}$, we will begin by establishing:
\begin{align*}
\sup_{P_n \in \mathcal{P}_n} \sup_{t \in \mathbb{R}} \left| J_{n,b}(t, P_n) -  J_{n}(t, P_n) \right| \rightarrow 0
\end{align*}

By triangle inequality, we may further reduce this problem to showing:
\begin{align*}
\label{kolmogrov-convergence}
\sup_{P_n \in \mathcal{P}_n} \sup_{t \in \mathbb{R}} \left| J_{n,b}(t, P_n) - J(t,P_\infty) \right| \rightarrow 0, \ \ \  
\sup_{P_n \in \mathcal{P}_n} \sup_{t \in \mathbb{R}} \left| J_{n}(t, P_{n}) - J(t,P_\infty) \right| \rightarrow 0 
\end{align*}

Since $b_n \leq n$, it will turn out that establishing convergence for the first term above implies convergence of the second term.  Furthermore, since the posited limit is continuous, it suffices to show convergence in distribution due to Polya's Theorem. In essence, we will use a triangular array central limit theorem to establish convergence in distribution of a dominant term for any sequence $\{ P_n \}_{n \in \mathbb{N}}$ and show that the remainder terms converge in probability uniformly over $\mathcal{P}_n$.  Weak convergence would then follow from Slutsky's Theorem. 

The proof strategy largely follows the proof of Theorem 1 of \citet{Bickel-Chen-Levina-method-of-moments}.  However, the estimation of $\rho_n$ turns out to be a non-negligible perturbation, so we do not include it in our analysis below.  We also show that the censoring related to the sparsity sequence is negligible for count statistics under more general conditions than those for eigenvalue statistics.  Finally, we verify that the bounds indeed hold uniformly over a large class of distributions.  Now, let:
\begin{align}
\begin{split}
U_1 &= \rho_n^{-e} \sum_{S \subset G_{n,b}, \ S \sim R} T(S) - \mathbb{E}[T(S) \ | \  \boldsymbol{\xi}_n ]
\\ U_2 &= \mathbb{E}[\hat{P}(R) \ | \ \boldsymbol{\xi}_n]\rho_n^{-e} - \tilde{P}(R)
\end{split}
\end{align}
where $T(S)$ is given by:
\begin{align}
T(S) = \frac{1}{{b_n \choose p} |Iso(R)|} \prod_{(i,j) \in \mathcal{E}(S)} A_{ij}
\end{align}

We will start by showing that $\sup_{P_n \in \mathcal{P}_n} \sqrt{b_n} U_1 = o_P(1)$. We have that:
\begin{align*}
\mathrm{Var}(U_1) = \sum_{S_1 \sim R, S_2 \sim R} \mathrm{Cov}(T(S_1), T(S_2)) \rho_n^{-2q}
\end{align*}
Using analogous reasoning to the above reference, observe that the covariance is zero for $\mathcal{E}(S_1) \cap \mathcal{E}(S_2) = \emptyset$; therefore it suffices to consider $S_1, S_2$ such that $S_1 \cap S_2$ has $c$ vertices and $d$ edges in common.
Since $A_{ij}^2 = A_{ij}$ and $|\mathcal{E}(S_1 \cup S_2)| = 2q-d$, we have that:
\begin{align*} 
 \mathbb{E}[T(S_1)T(S_2)] &= \mathbb{E}\left[\prod_{(i,j) \in \overline{S_1 \cap S_2}} A_{ij} \prod_{(i,j) \in S_1 \cap S_2} A_{ij}^2 \right] 
 \\ &= \mathbb{E}\left[\prod_{(i,j) \in S_1 \cup S_2} A_{ij} \right]
 \\ &\leq \rho_n^{2q-d} \ \mathbb{E}\left[\left(w(\xi_i, \xi_j) \wedge \rho_n^{-1}\right)^{2q-d} \right]
 \\ & \leq  \rho_n^{2q-d} \  \mathbb{E}\left[w^{2q-d}(\xi_i, \xi_j)\right]
\end{align*} 
Observe that for $\mathbb{E}[T(S_1)] \cdot \mathbb{E}[T(S_2)]$ we have the bound $\rho_n^{2q} \ \mathbb{E}[w^{q}(\xi_i, \xi_j)]^2$. Therefore, $\mathrm{Cov}(T(S_1), T(S_2) \leq \rho_n^{2q-d}\{ \mathbb{E}[w^{2q}(\xi_i, \xi_j)] \wedge 1 \}$.  As observed in \citep{Bickel-Chen-Levina-method-of-moments}, there are $O(n^{2p-c})$ terms with $c$ vertices in common, so that the total contribution of these terms to $\mathrm{Var}(U_1)$ is, uniformly of $P_n \in \mathcal{P}_n$:
\begin{align*}
O\left(n^{-c} \rho_n^{-d} \int w^{2q}(u,v) \ du \ dv \right)
\end{align*}  
Therefore, under the assumption that $\inf_{P_n \in \mathcal{P}_n} b_n \rho_n \rightarrow \infty$, we have that, for any acyclic graph or $p$-cycle, by law of total variance:
\begin{align*}
\label{order-variance}
\sup_{P_n \in \mathcal{P}_n}\text{Var}(U_1 )/b_n = \sup_{P_n \in \mathcal{P}_n}\mathbb{E}[\text{Var}(U_1  \ | \ \boldsymbol{\xi}_n)]/b_n \rightarrow 0
\end{align*}
Therefore, by Chebychev's inequality $\sqrt{b_n}U_1 \xrightarrow {P} 0$, uniformly in $\mathcal{P}_n$. Now, we will bound $U_2$. We will further approximate $U_2$ and show that the remainder is uniformly negligible.  Let:

\begin{align}
\begin{split}
V_1 &= \frac{1}{{b_n \choose p}|\mathrm{Iso}(R)|}\sum_{S \subset G_{n,b}, \ S \sim R } \ \prod_{(i,j) \in S} w(\xi_i, \xi_j)
\\ V_2 &= \frac{1}{{b_n \choose p}|\mathrm{Iso}(R)|}\sum_{S \subset G_{n,b}, \ S \sim R } \ \prod_{(i,j) \in S} w(\xi_i, \xi_j) \wedge \rho_n^{-1}
\\ V_3 &= \frac{1}{{b_n \choose p}|\mathrm{Iso}(R)|}\sum_{S \subset G_{n,b}, \ S \sim R} \ \prod_{(i,j) \in S}  w(\xi_i, \xi_j) \wedge \rho_n^{-1} \prod_{(i,j) \in \bar{S}} (1-h_n (\xi_i, \xi_j))
\end{split}
\end{align}
Further let $U_4 = V_1 - \mathbb{E}[V_1]$ and observe that $U_2 = V_3 - \mathbb{E}[V_3]$; we will show that $\sqrt{b_n} U_2$ may be well approximated by $\sqrt{b_n}U_4$, uniformly over $\mathcal{P}_n$.  By Chebychev's inequality again,
we have that:
\begin{align*}
P\left(\left|\sqrt{b_n}(V_2 - \mathbb{E}[V_2]) - \sqrt{b_n}(V_3 - \mathbb{E}[V_3]) \right|  > \epsilon \right) \leq \frac{b_n \mathrm{Var}(V_2-V_3)}{\epsilon^2}
\end{align*}

Now observe that $V_1 - V_2$ is a U-statistic of order $p$. Therefore, by Lemma A of \citet{Serfling-Approximation-Theorems}, page 183 and the $c_r$ inequality, for some $(k,l) \in \bar{S}$:
\begin{align*}
\begin{split} 
& \ \ \ \ \ \mathrm{Var}(V_2-V_3) 
\\ & \leq \frac{p}{b_n} \ \mathbb{E}\left[\left( \frac{1}{|\mathrm{Iso}(R)|}\sum_{\mathcal{V}(S) = \{1,\ldots,  p \}, \ S \sim R} \ \prod_{(i,j) \in S} w(\xi_i, \xi_j) \wedge \rho_n^{-1} \left[1- \prod_{(i,j) \in \bar{S}}(1- \rho_n  w(\xi_i, \xi_j) \wedge 1) \right]\right)^2\right]
\\ & \leq \frac{p}{b_n |\mathrm{Iso}(R)|} \mathbb{E}\left[ \left( \prod_{(i,j) \in S}  w(\xi_i, \xi_j) \wedge \rho_n^{-1} \times \rho_n w(\xi_{k}, \xi_{l}) \wedge 1 \right)^2 \right]
\\ &\leq \rho_n^2 \frac{p\int_{[0,1]^2} w^{2(e+1)} (u,v) du \ dv }{b_n |\mathrm{Iso}(R)|}
\end{split}
\end{align*} 
Under our assumptions, $b_n\mathrm{Var}(V_2-V_3) \rightarrow 0$ uniformly in $\mathcal{P}_n$.  Now we will show $\sqrt{b_n} (V_1 - V_2)$ is asymptotically negligible. Observe that the function $g(x) = \prod_{i=1}^e x_i$ satisfies the relation:
\begin{align*}
|g(x)-g(y)| &\leq (\norm{x}_\infty^{e-1} + \norm{y}_\infty^{e-1}) \norm{x-y}_\infty
\\ \implies |g(x)-g(y)|^2  &\leq (\norm{x}_\infty^{d-1} + \norm{y}_\infty^{e-1})^2 \norm{x-y}_\infty^2
\\  \implies |g(x)-g(y)|^2  &\leq (\norm{x}_\infty^{2(e-1)} + 2\norm{x}_\infty^{e-1} \norm{y}_\infty^{e-1} + \norm{y}_\infty^{2(e-1)}) \norm{x-y}_\infty^2
\end{align*}
Therefore, it follows that $\mathrm{Var}(\sqrt{b_n}(V_1-V_2))$ is upper bounded by:
 \begin{align*}
& \ \ \ \   \frac{p}{|\mathrm{Iso}(R)|}\mathbb{E}\left[ \left(\prod_{(i,j) \in S} w(\xi_i, \xi_j) \wedge \rho_n^{-1} - \prod_{(i,j) \in S}w(\xi_i, \xi_j)\right)^2  \right]
\\ & \leq \frac{p}{|\mathrm{Iso}(R)|} \mathbb{E}\left[ \ \left(\max_{(i,j) \in S} w(\xi_i, \xi_j) \wedge \rho_n^{-1}\right)^{2(e-1)} \right.
\\ & \left. \ \ \ \ \ \ \ \ \ \ \ \ \ \ \ \ \ \  + \ 2 \left(\max_{(i,j) \in S} w(\xi_i, \xi_j) \wedge \rho_n^{-1}\right)^{e-1} \left(\max_{(i,j) \in S} w(\xi_i, \xi_j)\right)^{e-1} \right.
\\ & \left. \left. \ \ \ \ \ \ \ \ \ \ \ \ \ \ \ \ \ \ + \left(\max_{(i,j) \in S} w(\xi_i, \xi_j)\right)^{2(e-1)} \right\} \times \max_{(i,j) \in S} \left| w(\xi_i, \xi_j) \wedge  \rho_n^{-1} - w(\xi_i, \xi_j) \right|^2    \right]
 \\ & \leq  \frac{4p}{|\mathrm{Iso}(R)|} \mathbb{E}\left[ \max_{(i,j) \in S} w^{2e}(\xi_i, \xi_j) \mathbbm{1}\left(\max_{(i,j) \in S} w(\xi_i, \xi_j) > \rho_n^{-1} \right) \right]
\end{align*}

Since $\sup_{w \in \mathcal{W}} \mathbb{E}[w^{2e}(\xi_i,\xi_j)] < \infty$, it follows that:
\begin{align*}
\sup_{w \in \mathcal{W}} \mathbb{E}\left[ \max_{(i,j) \in S} w^{2e}(\xi_i,\xi_j) \right] & \leq \sup_{w \in \mathcal{W}} e \times \mathbb{E}\left[w^{2e}(\xi_i,\xi_j) \right] < \infty 
\end{align*}
Therefore, it follows that the collection $\{\max_{(i,j) \in S} w^{2e}(\xi_j,\xi_j)\}_{w \in \mathcal{W}}$ is uniformly integrable, and furthermore, since $\inf_{P_n \in \mathcal{P}_n} \rho_n^{-2e} \rightarrow \infty$, we have that the above term converges to 0 uniformly in $\mathcal{P}_n$. Now, we have that  $\sqrt{b}U_4$ converges to a Normal distribution under our assumptions for any sequence $\{ P_n \}_{n \in \mathbb{N}}$ by a Lindeberg Central Limit Theorem applied to the dominant term of the Hoeffding projection.  
Finally it suffices to verify that: 
\begin{align*}
 \lim_{n \rightarrow \infty} \sup_{P \in \mathcal{P}_n} P \left( \sup_{t \in \mathbb{R}} \left|L_{n,b}(t,P_n) - U_{n,b}(t,P_n) \right| > \epsilon \right) &= 0
\end{align*}
By triangle inequality, the quantity above is bounded by:
\begin{align*}
\sup_{P \in \mathcal{P}_n} P \left( \sup_{t \in \mathbb{R}} \left|L_{n,b}(t,P_n) - J_{n,b}(t,P_n) \right| > \epsilon \right) + \sup_{P \in \mathcal{P}_n} P \left( \sup_{t \in \mathbb{R}} \left|U_{n,b}(t,P_n) - J_{n,b}(t,P_n) \right| > \epsilon \right) 
\end{align*}
The second term was bounded in the proof of Theorem \ref{uniform-coverage-theorem-vertex}; it suffices to control the first term.  For each $\omega \in \Omega$, we may write:
\begin{align*}
& \sup_{t \in \mathbb{R}} \left|L_{n,b}(t,P_n) - J_{n,b}(t,P_n) \right| 
\\ &= \max \biggl(\sup_{t^+(\omega) \in \mathbb{R}} \left\{ L_{n,b}(t,P_n) - J_{n,b}(t,P_n) \biggr\} \ ,   \sup_{t^-(\omega) \in \mathbb{R}} \biggl\{   J_{n,b}(t,P_n) - L_{n,b}(t,P_n)   \biggr\} \right)
\end{align*}
where $t^+(\omega)$ and $t^-(\omega)$ are defined as $t$ such that the differences above are nonnegative.  Now by union bound, we have that:
\begin{align*}
\begin{split}
&  \ \ \ P \left( \sup_{t \in \mathbb{R}} \left|L_{n,b}(t,P_n) - J_{n,b}(t,P_n) \right| > \epsilon \right)
\\ &  \leq P\left(\sup_{t^+(\omega) \in \mathbb{R}} \biggl\{ L_{n,b}(t,P_n) - J_{n,b}(t,P_n) \biggr\} > \epsilon  \right) + P\left (\sup_{t^-(\omega) \in \mathbb{R}} \biggl\{   J_{n,b}(t,P_n) - L_{n,b}(t,P_n)   \biggr\} > \epsilon \right)
\end{split}
\end{align*}
We will bound the first term; the bound for the second term is analogous.  Define the event:
\begin{align}
\label{set-for-estimated-parameter}
A_n = \left\{ \sqrt{b}\left| \hat{\theta}_n(G_n) - \theta(G_\infty)\right| \leq a_n \right\}
\end{align}
We may choose $a_n \rightarrow 0$ such that $P(A_n) \rightarrow 1$.  Now by law of total probability:
\begin{align*}
\begin{split}
& \ \ \ \sup_{P_n \in \mathcal{P}_n} P\left(\sup_{t^+(\omega) \in \mathbb{R}} \biggl\{ L_{n,b}(t,P_n) - J_{n,b}(t,P_n) \biggr\} > \epsilon  \right) 
\\ &\leq \sup_{P_n \in \mathcal{P}_n} P\left(\sup_{t^+(\omega) \in \mathbb{R}} \biggl\{ U_{n,b}(t+a_n,P_n) - J_{n,b}(t,P_n) \biggr\} > \epsilon \right) + \sup_{P_n \in \mathcal{P}_n} P(A_n^c) 
\\ & \leq \sup_{P_n \in \mathcal{P}_n} P\left(\sup_{t \in \mathbb{R}} \left| U_{n,b}(t+a_n,P_n) - J_{n,b}(t+a_n ,P_n) \right|  > \epsilon/3 \right) 
\\ &  \ \ \ + \sup_{P_n \in \mathcal{P}_n} \mathbbm{1}\left( \sup_{t \in \mathbb{R}} |J_{n,b}(t+a_n ,P_n) - J(t+a_n, P_\infty)| > \epsilon/3 \right) 
\\ &  \ \ \ + \sup_{P_n \in \mathcal{P}_n} \mathbbm{1}\left(\sup_{t \in \mathbb{R}} |J(t+a_n ,P_\infty)- J(t, P_\infty)| > \epsilon/3 \right) + \sup_{P_n \in \mathcal{P}_n} P_n(A_n^c)
\\ &= \mathbf{I} + \mathbf{II} + \mathbf{III} + \mathbf{IV} 
\end{split}
\end{align*}
By similar reasoning to Theorem \ref{uniform-coverage-theorem-vertex}, $\mathbf{I} \rightarrow 0$.  By Polya's Theorem, $\mathbf{II} \rightarrow 0$.  For $\mathbf{III}$, since $\sigma^2(P_\infty) \geq c^2$, a bound based on the Levy concentration function of a Gaussian random variable gives:
\begin{align*}
\mathbf{III} \leq \frac{a_n}{\sqrt{2\pi} c} \rightarrow 0
\end{align*} 
Finally, we have that $\mathbf{IV} \rightarrow 0$ since $P(A_n^c) \rightarrow 0$.  The negative case is analogous. \qed

\begin{remark}
Using a Berry-Esseen Theorem for U-Statistics due to \citet{Van-zwet-berry-esseen-symmetric}, one may also derive finite sample bounds.  However, the rates heavily depend on $\{\rho_n \}_{n \in \mathbb{N}}$ and stronger integrability conditions are required.  
\end{remark}
\begin{remark}
The above argument may be easily generalized to yield uniform validity for subsampled estimators with sparsity parameter $\rho_n$ estimated on the size $n$ sample. Formally, it suffices to show: $\lim_{n \rightarrow \infty} \sup_{P \in \mathcal{P}_n} P( \sup_{t \in \mathbb{R}} |\hat{L}_{n,b}(t,P_n) - U_{n,b}(t,P_n) | > \epsilon ) = 0$.   
\end{remark}
\end{proof}

\subsection{Uniform Validity for Eigenvalues}
\label{uniform-validity-eigenvalues}
In this section, we will provide some simple sufficient conditions under which uniform validity holds for eigenvalue statistics.  As one might expect, the nature of the function class is much different from the class considered for count functionals in Section \ref{uniform-validity-counts}.  Let $\mathcal{W}$ be a collection of functions with an integral operator satisfying:
\begin{align}
\label{rank-condition}
r \leq \mathrm{rank}(T_w) \leq K
\end{align}
for some $0 < r \leq K < \infty$. Suppose that $ w \in \mathcal{W}$ further satisfy:
\begin{align}
\label{eigenvalue-condition}
\begin{split}
\norm{w(u,v)}_\infty &\leq  D \ a.s.
\\ |\lambda_i| &\geq d \ \ \ \forall \ 1 \leq i \leq k 
\\ |\lambda_i - \lambda_{i-1}| &\geq \epsilon  \ \ \ \forall \ 1 \leq i \leq k
\\ \int \phi_r^4 \ dP &\geq c
\end{split}
\end{align}
for $D < \infty$, $c >1 $, $d > 0$, and $\epsilon > 0$. Further suppose $\{\rho_n\}_{n \in \mathbb{N}} \in \mathcal{S}$ satisfy:
\begin{align}
\label{sparsity-eigenvalue-condition}
\lim_{n \rightarrow \infty}\sup_{\rho_n \in \mathcal{S}_n} \rho_n = 0, \ \ \ \inf_{\rho_n \in \mathcal{S}_n} \rho_n  = \omega( 1/\sqrt{b_n})
\end{align}

We have the following result; a proof is omitted as our result may be verified by tracing steps in the proof of Theorem 5.6 of \citet{koltchinksii-gine-kernel-operator}.  Also note that we may use a triangular array strong law of large numbers, which holds under a uniform bound on the fourth moment, to derive a uniform variant of Theorem 3.1 of the above reference.  This result is needed for generalizing Step 3 of the proof of Theorem \ref{subsampling-eigenvalues}.   
\begin{proposition}[Uniform Validity of Vertex Subsampling for Eigenvalues]
	\label{uniform-eigenvalue-theorem}
	Let $\mathcal{W}$ be a collection of functions such that all $w \in \mathcal{W}$ satisfy (\ref{rank-condition}) for some $K < \infty$.  Further suppose that (\ref{eigenvalue-condition}) is satisfied for appropriate constants and $\mathcal{S}$ satisfies (\ref{sparsity-eigenvalue-condition}) for some $b_n = o(n)$.  Let  $\tau_n[\hat{\theta}(G_n) - \theta(G_\infty)] = \sqrt{n}[\lambda_r(A^{(n)})/n \rho_n - \lambda_r(w)]$.
	
	Then, conditions (\ref{sampling-dist-convergence}) and (\ref{subsampling-dist-convergence}) are satisfied.  
\end{proposition}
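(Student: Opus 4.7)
The plan is to verify the two hypotheses (\ref{sampling-dist-convergence}) and (\ref{subsampling-dist-convergence}) of Theorem \ref{uniform-coverage-theorem-vertex}. The central ingredient is a uniform-in-$\mathcal{P}_n$ version of Theorem \ref{eigenvalue-weak-convergence}: for any sequence $P_n \in \mathcal{P}_n$,
\[
\sqrt{n}[\lambda_r(A^{(n)}/n\rho_n) - \lambda_r(w)] \rightsquigarrow \mathcal{N}(0, \sigma_r^2(w))
\]
with $\underline{\sigma}^2 \leq \sigma_r^2(w) \leq \overline{\sigma}^2$ for constants depending only on $c, d, \epsilon, D, K$. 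Given such a uniform CLT, condition (\ref{sampling-dist-convergence}) follows from Polya's theorem applied along any sequence $P_n \in \mathcal{P}_n$ (the limit is continuous, uniformly), together with the standard sequential reduction of sup-over-$\mathcal{P}_n$ to sequential convergence.

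To obtain the uniform CLT, I would revisit the three-term decomposition (\ref{eigenvalue_decomposition}). The censoring term $\sqrt{n}[\lambda_r(P^{(n)}/n\rho_n) - \lambda_r(W^{(n)}/n)]$ vanishes identically once $\rho_n D < 1$, which holds uniformly on $\mathcal{S}_n$ for $n$ large since $\sup_{\rho_n \in \mathcal{S}_n}\rho_n \to 0$ and $\|w\|_\infty \leq D$. The noise term is controlled via Vu's spectral-norm bound, whose constants depend only on $D$ and $\rho_n^{-1}$, combined with the Eldridge--Belkin perturbation bound applied with the uniform spectral gap $|\lambda_i - \lambda_{i-1}| \geq \epsilon$ and magnitude bound $|\lambda_i| \geq d$; this yields $\sqrt{n}\cdot|\lambda_r(A/n\rho_n) - \lambda_r(P/n\rho_n)| = o_P(1)$ uniformly in $\mathcal{P}_n$. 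The dominant term $\sqrt{n}[\lambda_r(W^{(n)}/n) - \lambda_r(w)]$ is handled by a uniform triangular-array version of Theorem 5.6 of Koltchinskii--Gine: because $w$ is bounded and the rank is at most $K$, the CLT reduces to a multivariate Lindeberg CLT for sums of bounded i.i.d. functionals of the eigenfunctions, whose rate depends only on $\int \phi_r^4$; the lower bound $\int \phi_r^4 \geq c$ then furnishes $\sigma_r^2(w) \geq \underline{\sigma}^2 > 0$ uniformly.

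For (\ref{subsampling-dist-convergence}), I would replicate the final portion of the proof of Proposition \ref{uniform-count-theorem}. Define $A_n = \{ \sqrt{b}\,|\hat{\theta}_n(G_n) - \theta(G_\infty)| \leq a_n \}$ for some $a_n \to 0$; this event has probability tending to $1$ uniformly in $\mathcal{P}_n$ because $\sqrt{n}(\hat{\theta}_n - \theta) = O_P(1)$ uniformly (the uniform CLT) and $b_n = o(n)$. Bracketing $|L_{n,b}(t,P_n) - U_{n,b}(t,P_n)|$ at $t \pm a_n$, the residual difference reduces to $|U_{n,b}(t\pm a_n,P_n) - J_{n,b}(t\pm a_n,P_n)|$ (bounded by the Dvoretzky--Kiefer--Wolfowitz/Hoeffding argument used in the proof of Theorem \ref{uniform-coverage-theorem-vertex}) plus an oscillation of the limit of order $a_n/(\sqrt{2\pi}\,\underline{\sigma}) \to 0$ obtained from Levy concentration of the uniformly non-degenerate Gaussian.

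The main obstacle is replacing the Egorov step in the original proof of Theorem \ref{eigenvalue-weak-convergence}, which furnishes a high-probability set on which $\lambda_r(W^{(n)}/n) \to \lambda_r(w)$ uniformly in $t$ only pointwise in $P$. Under the current uniform rank, boundedness, and fourth-moment hypotheses, I would instead invoke an exponential concentration inequality (e.g., a Bernstein-type bound for the Hilbert--Schmidt norm of $W^{(n)}/n - T_w$, viewed as a matrix-valued U-statistic of bounded kernel) to secure such a high-probability set with probability tending to $1$ uniformly in $\mathcal{W}$. Once this uniform concentration is in place, the remainder of the Eldridge--Belkin argument transfers verbatim and produces the uniform CLT needed above.
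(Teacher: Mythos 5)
Your sketch is correct in outline and follows essentially the blueprint the paper itself gestures at: the paper gives no written proof of Proposition \ref{uniform-eigenvalue-theorem}, remarking only that it can be verified by tracing the steps of Theorem 5.6 of \citet{koltchinksii-gine-kernel-operator} and by using a triangular-array strong law of large numbers (valid under the uniform fourth-moment bound in (\ref{eigenvalue-condition})) to obtain a uniform variant of their Theorem 3.1, which is exactly the ingredient needed to replace the Egorov step in Step 3 of the proof of Theorem \ref{eigenvalue-weak-convergence}. Your plan agrees with this: censoring term identically zero once $\sup_{\rho_n \in \mathcal{S}_n}\rho_n D < 1$, Vu plus Eldridge--Belkin with the uniform gap and magnitude bounds for the noise term, a uniform CLT for the dominant term with variance bounded below via $\int \phi_r^4 \geq c > 1$ and $|\lambda_r| \geq d$, and the bracketing/DKW argument from the end of the proof of Proposition \ref{uniform-count-theorem} for condition (\ref{subsampling-dist-convergence}). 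The one place you genuinely diverge is the Egorov replacement: the paper's suggested route is a triangular-array SLLN giving a uniform analogue of Koltchinskii--Gin\'e Theorem 3.1, whereas you propose an exponential concentration bound; that is a viable (indeed quantitatively stronger) alternative, but as stated it is loose, since $W^{(n)}/n$ and $T_w$ live in different spaces and there is no well-defined ``Hilbert--Schmidt norm of $W^{(n)}/n - T_w$.'' To make it precise you should either embed $W^{(n)}/n$ as a step-function kernel and bound the $L^2$ distance to $w$, or, more simply in the finite-rank bounded case, note that $\|\phi_r\|_\infty \leq D/d$ (from $\lambda_r \phi_r(u) = \int w(u,v)\phi_r(v)\,dv$, (\ref{rank-condition}) and (\ref{eigenvalue-condition})) and apply Bernstein to the entries of the $K \times K$ empirical Gram matrix $n^{-1}\sum_i \phi_r(\xi_i)\phi_s(\xi_i)$, whose deviation from the identity controls $\max_r|\lambda_r(W^{(n)}/n) - \lambda_r(w)|$ with constants depending only on $(D,d,K)$; with that repair your argument delivers the same uniform control the paper obtains from the triangular-array SLLN.
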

\begin{remark}
	For certain problems such as hypothesis testing, it may be of interest to consider function classes changing with $n$. The bounds in \citep{koltchinksii-gine-kernel-operator} are actually sharper than needed for finite-rank graphons, so one may asymptotically relax the conditions in (\ref{eigenvalue-condition}) at an appropriate rate.  
\end{remark}

\begin{remark}
	Uniform validity may be established for smooth functionals of eigenvalues or counts using uniform variants of the Delta Method or the Continuous Mapping Theorem.  See for example, \citet{kasy-uniform-delta-method}.
\end{remark}

\section{An Interesting Phenomenon Involving $\lambda_1(A^{(n)})/n\rho_n$}
\label{appendix-max-eigenvalue}
In Section \ref{subsampling-eigenvalues}, we argue that replacing $\rho_n$ with $\hat{\rho}_n$ leads a non-negligble perturbation. Intuitively, it would seem that a functional with an estimated sparsity parameter would have a larger variance. While simulations suggest that this intuition is correct for most eigenvalues and count functionals, to our surprise, they suggest the opposite is true for the maximum eigenvalue.  While intriguing, a theoretical investigation of this phenomenon is beyond the scope of the present paper.  
\begin{figure}[H] 
\centering
  \begin{subfigure}[b]{0.45\linewidth}
    \centering
    \includegraphics[width=0.95\linewidth]{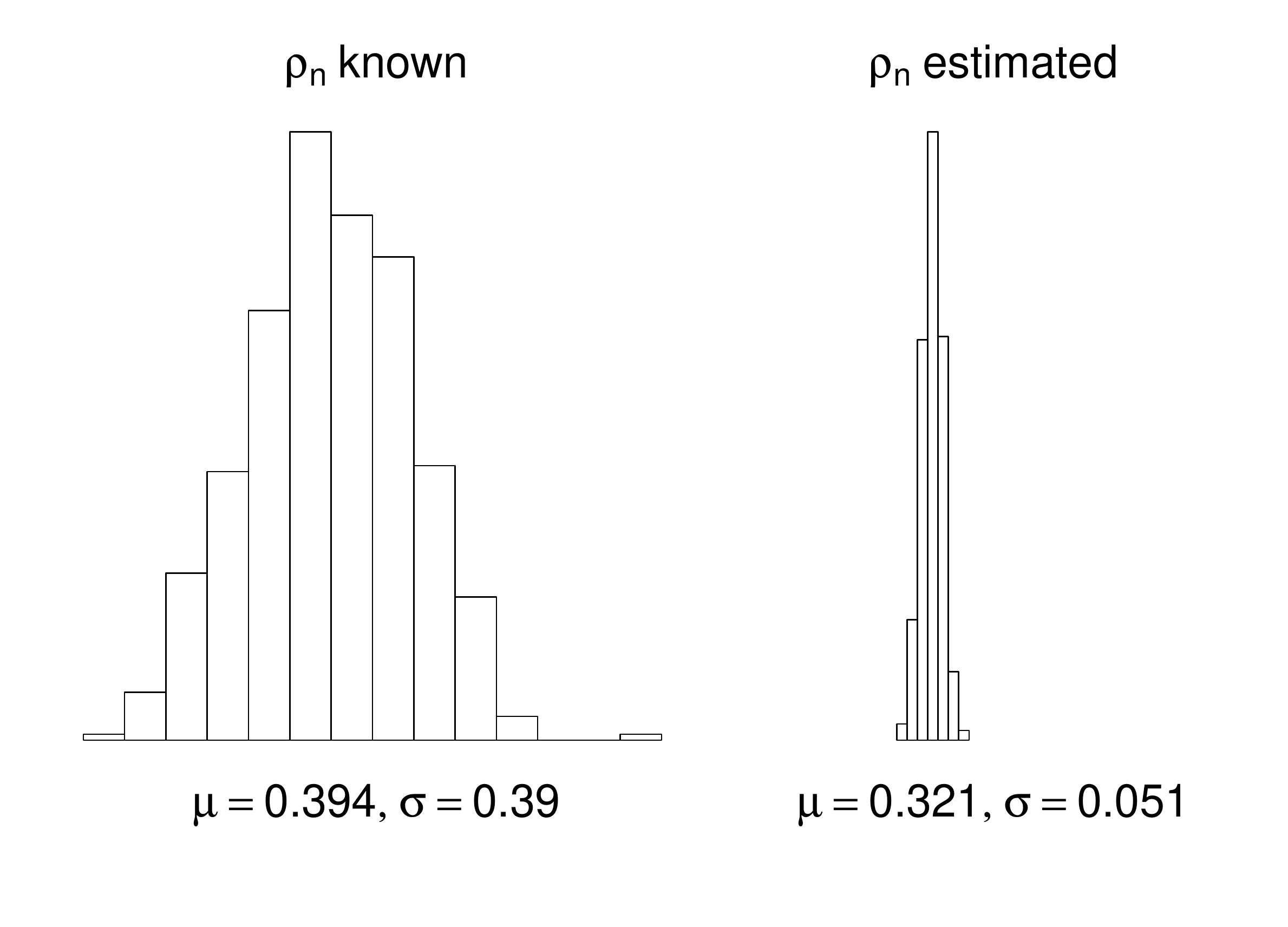} 
    \caption{2 Class Stochastic Block Model} 
    \vspace{4ex}
  \end{subfigure}
  \begin{subfigure}[b]{0.45\linewidth}
    \centering
    \includegraphics[width=0.95\linewidth]{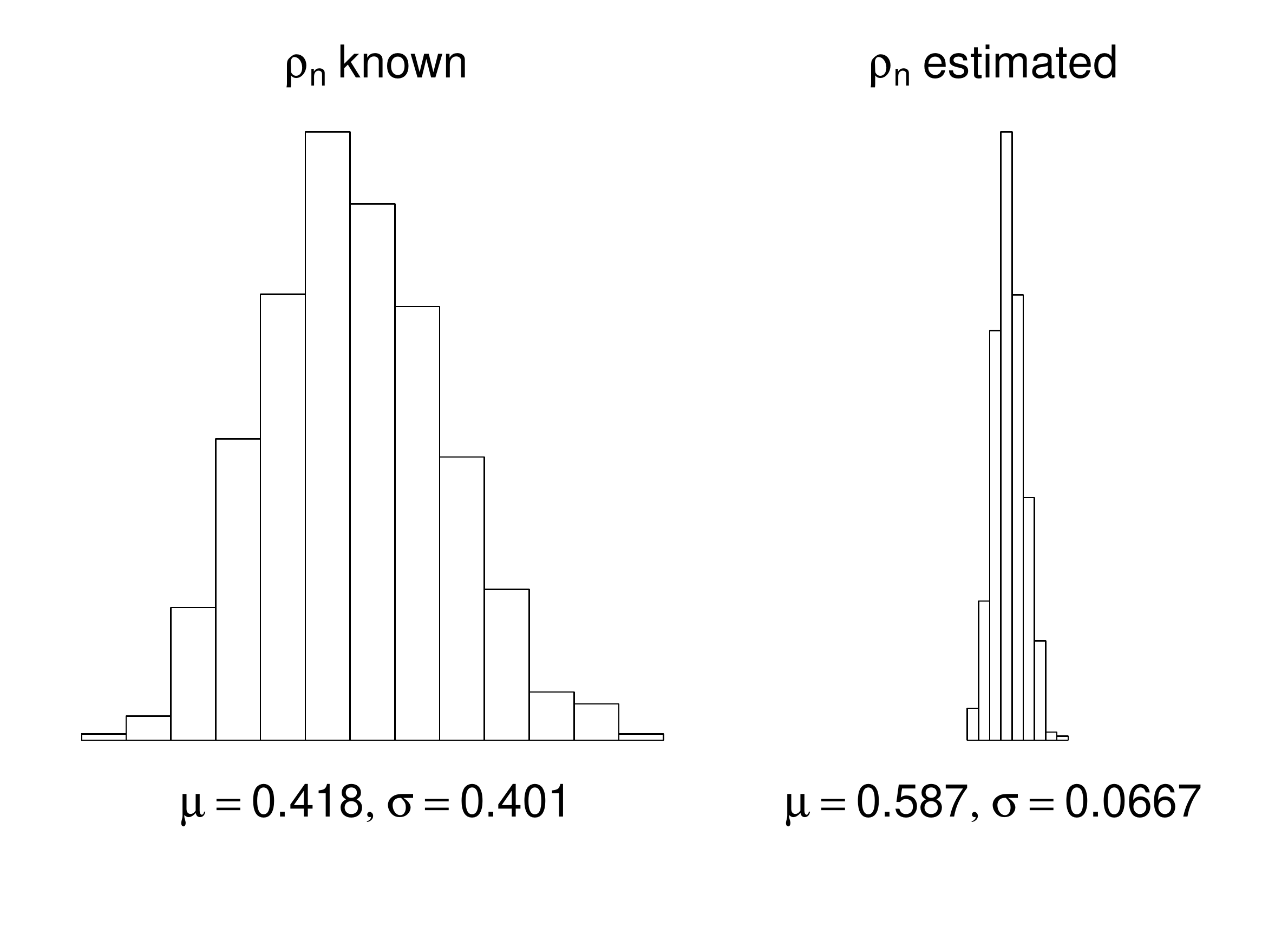} 
    \caption{3 Class Stochastic Block Model} 
    \vspace{4ex}
  \end{subfigure} 
  \begin{subfigure}[b]{0.45\linewidth}
    \centering
    \includegraphics[width=0.95\linewidth]{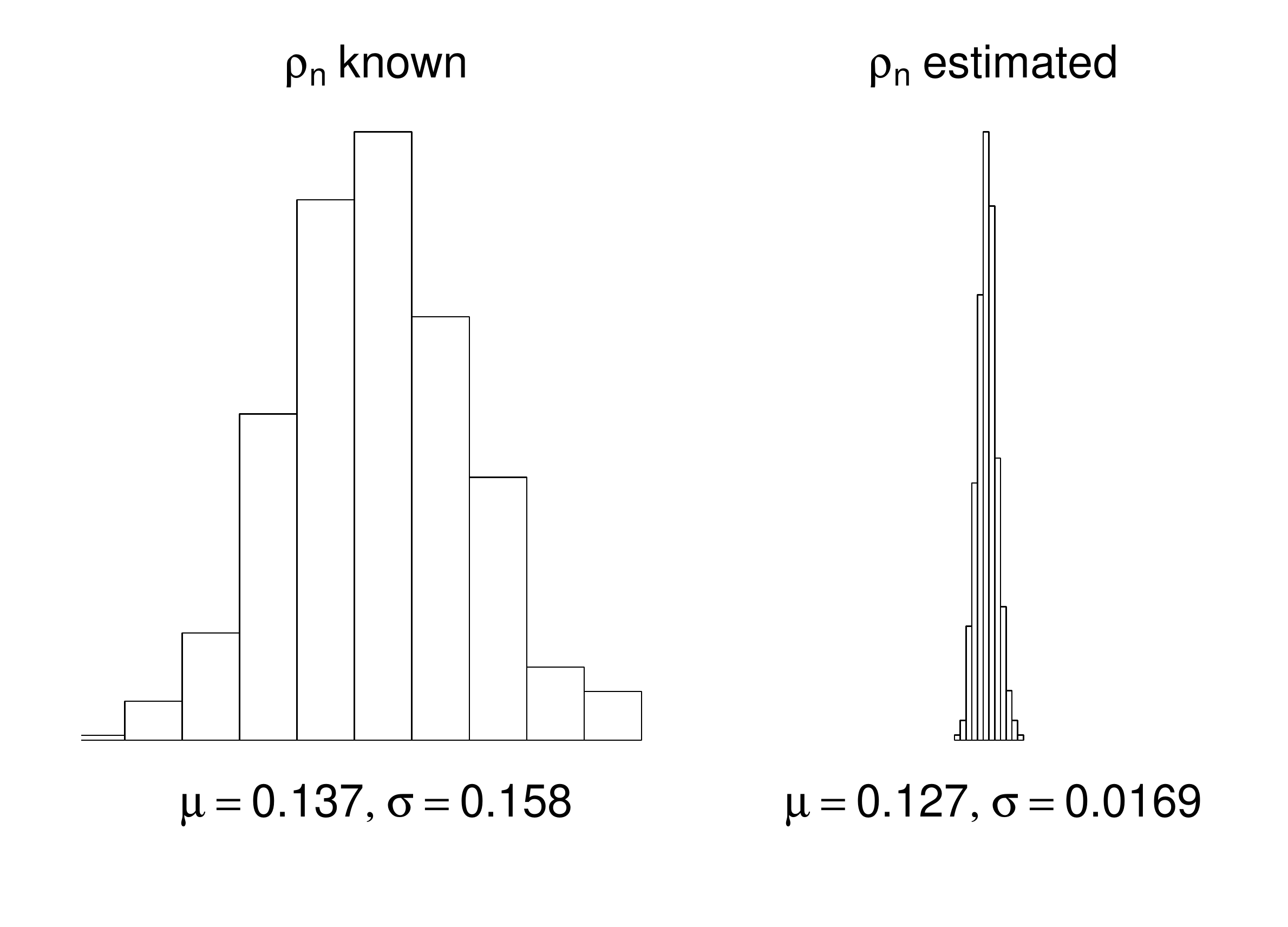} 
    \caption{Low-rank Graphon} 
  \end{subfigure}
  \begin{subfigure}[b]{0.45\linewidth}
    \centering
    \includegraphics[width=0.95\linewidth]{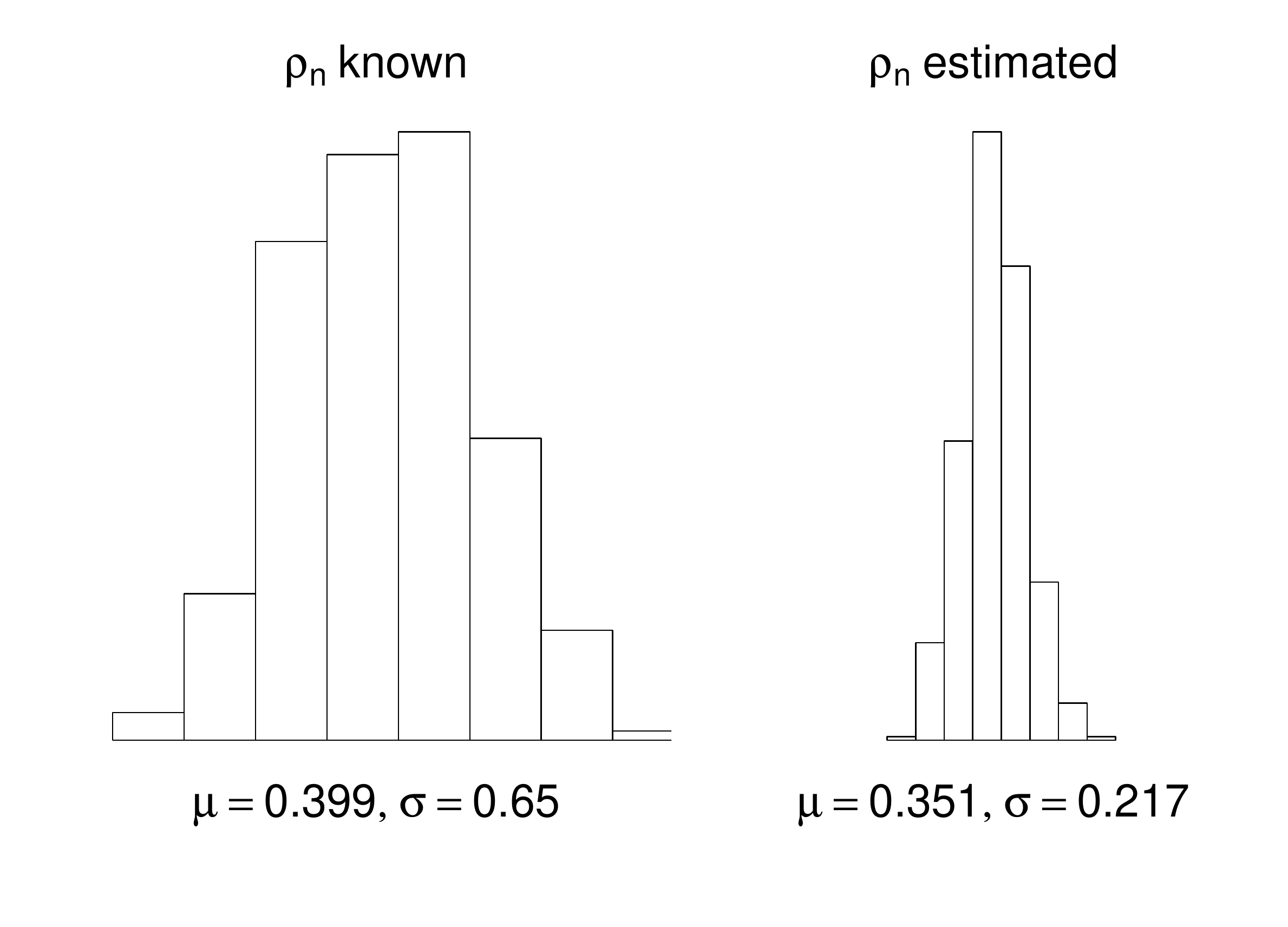} 
    \caption{Gaussian Latent Space Model} 
  \end{subfigure} 
  \caption{A comparison of the sampling distribution of the maximum eigenvalue normalized by known $\rho_n$ with  estimated $\rho_n$ for various graphon models.  The comparisons involve 500 samples of graphs with $n=1000$ vertices, with a sparsity parameter $\rho_n \asymp n^{-1/4}$.}
  \label{fig7} 
\end{figure}

\newpage

\section{Tables for Simulation Study}
\label{simulations-tables}

Here we provide detailed coverage results for subsampling and p-subsampling for the SBM (table~\ref{table:sbmall}) and the GLSM  (table~\ref{table:glsmall}) .

\begin{table}[!ht]
\centering
\resizebox{1.25\textwidth}{!}{\begin{tabular}{llllllllllllll}
                                          &                                     &                                                        & \multicolumn{5}{c}{Coverage for Vertex Subsampling}         &                                                      & \multicolumn{5}{c}{Coverage for $p$-subsampling}           \\ \hline
\multicolumn{1}{|l|}{Sparsity Level}      & \multicolumn{1}{l|}{Parameter}      & \multicolumn{1}{l|}{\diagbox[width=3.5em]{$b_n$}{$n$}} & $1000$ & 3250  & 5500  & 7750  & \multicolumn{1}{l|}{10000} & \multicolumn{1}{l|}{\diagbox[width=3em]{$p_n$}{$n$}} & 1000  & 3250  & 5500  & 7750  & \multicolumn{1}{l|}{10000} \\ \hline
\multicolumn{1}{|l|}{}                    & \multicolumn{1}{l|}{}               & \multicolumn{1}{l|}{$0.10n$}                           & 1.000  & 1.000 & 1.000 & 1.000 & \multicolumn{1}{l|}{1.000} & \multicolumn{1}{l|}{0.10}                            & 1.000 & 1.000 & 1.000 & 1.000 & \multicolumn{1}{l|}{1.000} \\
\multicolumn{1}{|l|}{}                    & \multicolumn{1}{l|}{}               & \multicolumn{1}{l|}{$0.15n$}                           & 1.000  & 1.000 & 1.000 & 1.000 & \multicolumn{1}{l|}{1.000} & \multicolumn{1}{l|}{0.15}                            & 1.000 & 1.000 & 1.000 & 1.000 & \multicolumn{1}{l|}{1.000} \\
\multicolumn{1}{|l|}{$\nu_n = n^{-0.1}$}  & \multicolumn{1}{l|}{$\lambda_1(w)$} & \multicolumn{1}{l|}{$0.20n$}                           & 1.000  & 1.000 & 1.000 & 1.000 & \multicolumn{1}{l|}{1.000} & \multicolumn{1}{l|}{0.20}                            & 1.000 & 1.000 & 1.000 & 1.000 & \multicolumn{1}{l|}{1.000} \\
\multicolumn{1}{|l|}{}                    & \multicolumn{1}{l|}{}               & \multicolumn{1}{l|}{$0.25n$}                           & 1.000  & 1.000 & 1.000 & 1.000 & \multicolumn{1}{l|}{1.000} & \multicolumn{1}{l|}{0.25}                            & 1.000 & 1.000 & 1.000 & 1.000 & \multicolumn{1}{l|}{1.000} \\
\multicolumn{1}{|l|}{}                    & \multicolumn{1}{l|}{}               & \multicolumn{1}{l|}{$0.30n$}                           & 1.000  & 1.000 & 1.000 & 1.000 & \multicolumn{1}{l|}{1.000} & \multicolumn{1}{l|}{0.30}                            & 1.000 & 1.000 & 1.000 & 1.000 & \multicolumn{1}{l|}{1.000} \\ \cline{2-14} 
\multicolumn{1}{|l|}{}                    & \multicolumn{1}{l|}{}               & \multicolumn{1}{l|}{$0.10n$}                           & 0.000  & 0.002 & 0.012 & 0.004 & \multicolumn{1}{l|}{0.014} & \multicolumn{1}{l|}{0.10}                            & 0.000 & 0.002 & 0.008 & 0.004 & \multicolumn{1}{l|}{0.010} \\
\multicolumn{1}{|l|}{}                    & \multicolumn{1}{l|}{}               & \multicolumn{1}{l|}{$0.15n$}                           & 0.002  & 0.068 & 0.140 & 0.192 & \multicolumn{1}{l|}{0.252} & \multicolumn{1}{l|}{0.15}                            & 0.000 & 0.072 & 0.152 & 0.206 & \multicolumn{1}{l|}{0.252} \\
\multicolumn{1}{|l|}{}                    & \multicolumn{1}{l|}{$\lambda_2(w)$} & \multicolumn{1}{l|}{$0.20n$}                           & 0.212  & 0.432 & 0.510 & 0.564 & \multicolumn{1}{l|}{0.560} & \multicolumn{1}{l|}{0.20}                            & 0.224 & 0.450 & 0.488 & 0.580 & \multicolumn{1}{l|}{0.584} \\
\multicolumn{1}{|l|}{}                    & \multicolumn{1}{l|}{}               & \multicolumn{1}{l|}{$0.25n$}                           & 0.578  & 0.754 & 0.798 & 0.832 & \multicolumn{1}{l|}{0.796} & \multicolumn{1}{l|}{0.25}                            & 0.582 & 0.748 & 0.822 & 0.814 & \multicolumn{1}{l|}{0.810} \\
\multicolumn{1}{|l|}{}                    & \multicolumn{1}{l|}{}               & \multicolumn{1}{l|}{$0.30n$}                           & 0.878  & 0.906 & 0.932 & 0.920 & \multicolumn{1}{l|}{0.898} & \multicolumn{1}{l|}{0.30}                            & 0.878 & 0.902 & 0.932 & 0.912 & \multicolumn{1}{l|}{0.904} \\ \hline
\multicolumn{1}{|l|}{}                    & \multicolumn{1}{l|}{}               & \multicolumn{1}{l|}{$0.10n$}                           & 0.968  & 0.976 & 0.998 & 0.998 & \multicolumn{1}{l|}{1.000} & \multicolumn{1}{l|}{0.10}                            & 0.738 & 0.988 & 0.996 & 0.996 & \multicolumn{1}{l|}{0.998} \\
\multicolumn{1}{|l|}{}                    & \multicolumn{1}{l|}{}               & \multicolumn{1}{l|}{$0.15n$}                           & 1.000  & 1.000 & 1.000 & 1.000 & \multicolumn{1}{l|}{1.000} & \multicolumn{1}{l|}{0.15}                            & 1.000 & 1.000 & 1.000 & 1.000 & \multicolumn{1}{l|}{1.000} \\
\multicolumn{1}{|l|}{$\nu_n = n^{-0.25}$} & \multicolumn{1}{l|}{$\lambda_1(w)$} & \multicolumn{1}{l|}{$0.20n$}                           & 1.000  & 1.000 & 1.000 & 1.000 & \multicolumn{1}{l|}{1.000} & \multicolumn{1}{l|}{0.20}                            & 1.000 & 1.000 & 1.000 & 1.000 & \multicolumn{1}{l|}{1.000} \\
\multicolumn{1}{|l|}{}                    & \multicolumn{1}{l|}{}               & \multicolumn{1}{l|}{$0.25n$}                           & 1.000  & 1.000 & 1.000 & 1.000 & \multicolumn{1}{l|}{1.000} & \multicolumn{1}{l|}{0.25}                            & 1.000 & 1.000 & 1.000 & 1.000 & \multicolumn{1}{l|}{1.000} \\
\multicolumn{1}{|l|}{}                    & \multicolumn{1}{l|}{}               & \multicolumn{1}{l|}{$0.30n$}                           & 1.000  & 1.000 & 1.000 & 1.000 & \multicolumn{1}{l|}{1.000} & \multicolumn{1}{l|}{0.30}                            & 1.000 & 1.000 & 1.000 & 1.000 & \multicolumn{1}{l|}{1.000} \\ \cline{2-14} 
\multicolumn{1}{|l|}{}                    & \multicolumn{1}{l|}{}               & \multicolumn{1}{l|}{$0.10n$}                           & 0.000  & 0.000 & 0.000 & 0.000 & \multicolumn{1}{l|}{0.000} & \multicolumn{1}{l|}{0.10}                            & 0.000 & 0.000 & 0.000 & 0.000 & \multicolumn{1}{l|}{0.000} \\
\multicolumn{1}{|l|}{}                    & \multicolumn{1}{l|}{}               & \multicolumn{1}{l|}{$0.15n$}                           & 0.000  & 0.000 & 0.000 & 0.000 & \multicolumn{1}{l|}{0.000} & \multicolumn{1}{l|}{0.15}                            & 0.000 & 0.000 & 0.000 & 0.000 & \multicolumn{1}{l|}{0.000} \\
\multicolumn{1}{|l|}{}                    & \multicolumn{1}{l|}{$\lambda_2(w)$} & \multicolumn{1}{l|}{$0.20n$}                           & 0.000  & 0.000 & 0.000 & 0.000 & \multicolumn{1}{l|}{0.000} & \multicolumn{1}{l|}{0.20}                            & 0.000 & 0.000 & 0.000 & 0.000 & \multicolumn{1}{l|}{0.000} \\
\multicolumn{1}{|l|}{}                    & \multicolumn{1}{l|}{}               & \multicolumn{1}{l|}{$0.25n$}                           & 0.012  & 0.012 & 0.006 & 0.006 & \multicolumn{1}{l|}{0.008} & \multicolumn{1}{l|}{0.25}                            & 0.022 & 0.018 & 0.004 & 0.000 & \multicolumn{1}{l|}{0.010} \\
\multicolumn{1}{|l|}{}                    & \multicolumn{1}{l|}{}               & \multicolumn{1}{l|}{$0.30n$}                           & 0.244  & 0.324 & 0.298 & 0.300 & \multicolumn{1}{l|}{0.290} & \multicolumn{1}{l|}{0.30}                            & 0.330 & 0.346 & 0.316 & 0.328 & \multicolumn{1}{l|}{0.286} \\ \hline
\multicolumn{1}{|l|}{}                    & \multicolumn{1}{l|}{}               & \multicolumn{1}{l|}{$0.10n$}                           & 0.000  & 0.000 & 0.000 & 0.000 & \multicolumn{1}{l|}{0.000} & \multicolumn{1}{l|}{0.10}                            & 0.000 & 0.000 & 0.000 & 0.000 & \multicolumn{1}{l|}{0.000} \\
\multicolumn{1}{|l|}{}                    & \multicolumn{1}{l|}{}               & \multicolumn{1}{l|}{$0.15n$}                           & 0.346  & 0.196 & 0.218 & 0.244 & \multicolumn{1}{l|}{0.244} & \multicolumn{1}{l|}{0.15}                            & 0.008 & 0.154 & 0.226 & 0.276 & \multicolumn{1}{l|}{0.256} \\
\multicolumn{1}{|l|}{$\nu_n = n^{-0.33}$} & \multicolumn{1}{l|}{$\lambda_1(w)$} & \multicolumn{1}{l|}{$0.20n$}                           & 1.000  & 0.998 & 0.996 & 0.998 & \multicolumn{1}{l|}{1.000} & \multicolumn{1}{l|}{0.20}                            & 0.974 & 0.990 & 1.000 & 0.996 & \multicolumn{1}{l|}{1.000} \\
\multicolumn{1}{|l|}{}                    & \multicolumn{1}{l|}{}               & \multicolumn{1}{l|}{$0.25n$}                           & 1.000  & 1.000 & 1.000 & 1.000 & \multicolumn{1}{l|}{1.000} & \multicolumn{1}{l|}{0.25}                            & 1.000 & 1.000 & 1.000 & 1.000 & \multicolumn{1}{l|}{1.000} \\
\multicolumn{1}{|l|}{}                    & \multicolumn{1}{l|}{}               & \multicolumn{1}{l|}{$0.30n$}                           & 1.000  & 1.000 & 1.000 & 1.000 & \multicolumn{1}{l|}{1.000} & \multicolumn{1}{l|}{0.30}                            & 1.000 & 1.000 & 1.000 & 1.000 & \multicolumn{1}{l|}{1.000} \\ \cline{2-14} 
\multicolumn{1}{|l|}{}                    & \multicolumn{1}{l|}{}               & \multicolumn{1}{l|}{$0.10n$}                           & 0.000  & 0.000 & 0.000 & 0.000 & \multicolumn{1}{l|}{0.000} & \multicolumn{1}{l|}{0.10}                            & 0.000 & 0.000 & 0.000 & 0.000 & \multicolumn{1}{l|}{0.000} \\
\multicolumn{1}{|l|}{}                    & \multicolumn{1}{l|}{}               & \multicolumn{1}{l|}{$0.15n$}                           & 0.000  & 0.000 & 0.000 & 0.000 & \multicolumn{1}{l|}{0.000} & \multicolumn{1}{l|}{0.15}                            & 0.000 & 0.000 & 0.000 & 0.000 & \multicolumn{1}{l|}{0.000} \\
\multicolumn{1}{|l|}{}                    & \multicolumn{1}{l|}{$\lambda_2(w)$} & \multicolumn{1}{l|}{$0.20n$}                           & 0.000  & 0.000 & 0.000 & 0.000 & \multicolumn{1}{l|}{0.000} & \multicolumn{1}{l|}{0.20}                            & 0.000 & 0.000 & 0.000 & 0.000 & \multicolumn{1}{l|}{0.000} \\
\multicolumn{1}{|l|}{}                    & \multicolumn{1}{l|}{}               & \multicolumn{1}{l|}{$0.25n$}                           & 0.000  & 0.000 & 0.000 & 0.000 & \multicolumn{1}{l|}{0.000} & \multicolumn{1}{l|}{0.25}                            & 0.016 & 0.000 & 0.000 & 0.000 & \multicolumn{1}{l|}{0.000} \\
\multicolumn{1}{|l|}{}                    & \multicolumn{1}{l|}{}               & \multicolumn{1}{l|}{$0.30n$}                           & 0.302  & 0.016 & 0.014 & 0.010 & \multicolumn{1}{l|}{0.008} & \multicolumn{1}{l|}{0.30}                            & 0.402 & 0.030  & 0.020  & 0.014 & \multicolumn{1}{l|}{0.008} \\ \hline
\multicolumn{1}{|l|}{}                    & \multicolumn{1}{l|}{}               & \multicolumn{1}{l|}{$0.10n$}                           & 0.000  & 0.000 & 0.000 & 0.000 & \multicolumn{1}{l|}{0.000} & \multicolumn{1}{l|}{0.10}                            & 0.000 & 0.000 & 0.000 & 0.000 & \multicolumn{1}{l|}{0.000} \\
\multicolumn{1}{|l|}{}                    & \multicolumn{1}{l|}{}               & \multicolumn{1}{l|}{$0.15n$}                           & 0.000  & 0.000 & 0.000 & 0.000 & \multicolumn{1}{l|}{0.000} & \multicolumn{1}{l|}{0.15}                            & 0.000 & 0.000 & 0.000 & 0.000 & \multicolumn{1}{l|}{0.000} \\
\multicolumn{1}{|l|}{$\nu_n = n^{-0.45}$} & \multicolumn{1}{l|}{$\lambda_1(w)$} & \multicolumn{1}{l|}{$0.20n$}                           & 0.000  & 0.000 & 0.000 & 0.000 & \multicolumn{1}{l|}{0.000} & \multicolumn{1}{l|}{0.20}                            & 0.000 & 0.000 & 0.000 & 0.000 & \multicolumn{1}{l|}{0.000} \\
\multicolumn{1}{|l|}{}                    & \multicolumn{1}{l|}{}               & \multicolumn{1}{l|}{$0.25n$}                           & 0.086  & 0.006 & 0.002 & 0.000 & \multicolumn{1}{l|}{0.000} & \multicolumn{1}{l|}{0.25}                            & 0.000 & 0.000 & 0.000 & 0.000 & \multicolumn{1}{l|}{0.000} \\
\multicolumn{1}{|l|}{}                    & \multicolumn{1}{l|}{}               & \multicolumn{1}{l|}{$0.30n$}                           & 0.936  & 0.906 & 0.930 & 0.940 & \multicolumn{1}{l|}{0.932} & \multicolumn{1}{l|}{0.30}                            & 0.108 & 0.844 & 0.928 & 0.934 & \multicolumn{1}{l|}{0.938} \\ \cline{2-14} 
\multicolumn{1}{|l|}{}                    & \multicolumn{1}{l|}{}               & \multicolumn{1}{l|}{$0.10n$}                           & 0.000  & 0.000 & 0.000 & 0.000 & \multicolumn{1}{l|}{0.000} & \multicolumn{1}{l|}{0.10}                            & 0.000 & 0.000 & 0.000 & 0.000 & \multicolumn{1}{l|}{0.000} \\
\multicolumn{1}{|l|}{}                    & \multicolumn{1}{l|}{}               & \multicolumn{1}{l|}{$0.15n$}                           & 0.042  & 0.000 & 0.000 & 0.000 & \multicolumn{1}{l|}{0.000} & \multicolumn{1}{l|}{0.15}                            & 0.000 & 0.000 & 0.000 & 0.000 & \multicolumn{1}{l|}{0.000} \\
\multicolumn{1}{|l|}{}                    & \multicolumn{1}{l|}{$\lambda_2(w)$} & \multicolumn{1}{l|}{$0.20n$}                           & 0.936  & 0.000 & 0.000 & 0.000 & \multicolumn{1}{l|}{0.000} & \multicolumn{1}{l|}{0.20}                            & 0.016 & 0.000 & 0.000 & 0.000 & \multicolumn{1}{l|}{0.000} \\
\multicolumn{1}{|l|}{}                    & \multicolumn{1}{l|}{}               & \multicolumn{1}{l|}{$0.25n$}                           & 1.000  & 0.000 & 0.000 & 0.000 & \multicolumn{1}{l|}{0.000} & \multicolumn{1}{l|}{0.25}                            & 1.000 & 0.000 & 0.000 & 0.000 & \multicolumn{1}{l|}{0.000} \\
\multicolumn{1}{|l|}{}                    & \multicolumn{1}{l|}{}               & \multicolumn{1}{l|}{$0.30n$}                           & 0.840  & 0.012 & 0.000 & 0.000 & \multicolumn{1}{l|}{0.000} & \multicolumn{1}{l|}{0.30}                            & 1.000 & 0.000 & 0.000 & 0.000 & \multicolumn{1}{l|}{0.000} \\ \hline
\end{tabular}}
\smallskip
\caption{\label{table:sbmall}Simulation results for 3-class SBM}
\end{table}

\begin{table}[htbp]
\centering
\resizebox{1.25\textwidth}{!}{\begin{tabular}{llllllllllllll}
                                          &                                     &                                                        & \multicolumn{5}{c}{Coverage for Vertex Subsampling}         &                                                      & \multicolumn{5}{c}{Coverage for $p$-subsampling}           \\ \hline
\multicolumn{1}{|l|}{Sparsity Level}      & \multicolumn{1}{l|}{Parameter}      & \multicolumn{1}{l|}{\diagbox[width=3.5em]{$b_n$}{$n$}} & $1000$ & 3250  & 5500  & 7750  & \multicolumn{1}{l|}{10000} & \multicolumn{1}{l|}{\diagbox[width=3em]{$p_n$}{$n$}} & 1000  & 3250  & 5500  & 7750  & \multicolumn{1}{l|}{10000} \\ \hline
\multicolumn{1}{|l|}{}                    & \multicolumn{1}{l|}{}               & \multicolumn{1}{l|}{$0.10n$}                           & 0.938  & 0.970 & 0.990 & 0.976 & \multicolumn{1}{l|}{0.982} & \multicolumn{1}{l|}{0.10}                            & 0.888 & 0.962 & 0.992 & 0.984 & \multicolumn{1}{l|}{0.986} \\
\multicolumn{1}{|l|}{}                    & \multicolumn{1}{l|}{}               & \multicolumn{1}{l|}{$0.15n$}                           & 0.956  & 0.988 & 0.978 & 0.980 & \multicolumn{1}{l|}{0.984} & \multicolumn{1}{l|}{0.15}                            & 0.910 & 0.984 & 0.976 & 0.984 & \multicolumn{1}{l|}{0.984} \\
\multicolumn{1}{|l|}{$\nu_n = n^{-0.1}$}  & \multicolumn{1}{l|}{$\lambda_1(w)$} & \multicolumn{1}{l|}{$0.20n$}                           & 0.962  & 0.986 & 0.982 & 0.966 & \multicolumn{1}{l|}{0.954} & \multicolumn{1}{l|}{0.20}                            & 0.920 & 0.978 & 0.982 & 0.972 & \multicolumn{1}{l|}{0.966} \\
\multicolumn{1}{|l|}{}                    & \multicolumn{1}{l|}{}               & \multicolumn{1}{l|}{$0.25n$}                           & 0.964  & 0.972 & 0.976 & 0.954 & \multicolumn{1}{l|}{0.948} & \multicolumn{1}{l|}{0.25}                            & 0.944 & 0.966 & 0.978 & 0.958 & \multicolumn{1}{l|}{0.948} \\
\multicolumn{1}{|l|}{}                    & \multicolumn{1}{l|}{}               & \multicolumn{1}{l|}{$0.30n$}                           & 0.972  & 0.968 & 0.982 & 0.962 & \multicolumn{1}{l|}{0.918} & \multicolumn{1}{l|}{0.30}                            & 0.954 & 0.970 & 0.982 & 0.966 & \multicolumn{1}{l|}{0.932} \\ \cline{2-14} 
\multicolumn{1}{|l|}{}                    & \multicolumn{1}{l|}{}               & \multicolumn{1}{l|}{$0.10n$}                           & 0.894  & 0.972 & 0.962 & 0.960 & \multicolumn{1}{l|}{0.974} & \multicolumn{1}{l|}{0.10}                            & 0.818 & 0.928 & 0.946 & 0.956 & \multicolumn{1}{l|}{0.964} \\
\multicolumn{1}{|l|}{}                    & \multicolumn{1}{l|}{}               & \multicolumn{1}{l|}{$0.15n$}                           & 0.940  & 0.972 & 0.966 & 0.972 & \multicolumn{1}{l|}{0.976} & \multicolumn{1}{l|}{0.15}                            & 0.894 & 0.970 & 0.962 & 0.966 & \multicolumn{1}{l|}{0.978} \\
\multicolumn{1}{|l|}{}                    & \multicolumn{1}{l|}{$\lambda_2(w)$} & \multicolumn{1}{l|}{$0.20n$}                           & 0.908  & 0.958 & 0.972 & 0.966 & \multicolumn{1}{l|}{0.960} & \multicolumn{1}{l|}{0.20}                            & 0.880 & 0.956 & 0.974 & 0.974 & \multicolumn{1}{l|}{0.958} \\
\multicolumn{1}{|l|}{}                    & \multicolumn{1}{l|}{}               & \multicolumn{1}{l|}{$0.25n$}                           & 0.938  & 0.976 & 0.966 & 0.958 & \multicolumn{1}{l|}{0.964} & \multicolumn{1}{l|}{0.25}                            & 0.906 & 0.966 & 0.966 & 0.956 & \multicolumn{1}{l|}{0.952} \\
\multicolumn{1}{|l|}{}                    & \multicolumn{1}{l|}{}               & \multicolumn{1}{l|}{$0.30n$}                           & 0.938  & 0.952 & 0.958 & 0.956 & \multicolumn{1}{l|}{0.962} & \multicolumn{1}{l|}{0.30}                            & 0.914 & 0.948 & 0.958 & 0.950 & \multicolumn{1}{l|}{0.962} \\ \cline{2-14} 
\multicolumn{1}{|l|}{}                    & \multicolumn{1}{l|}{}               & \multicolumn{1}{l|}{$0.10n$}                           & 0.000  & 0.034 & 0.946 & 0.782 & \multicolumn{1}{l|}{0.622} & \multicolumn{1}{l|}{0.10}                            & 0.678 & 0.686 & 0.668 & 0.622 & \multicolumn{1}{l|}{0.566} \\
\multicolumn{1}{|l|}{}                    & \multicolumn{1}{l|}{}               & \multicolumn{1}{l|}{$0.15n$}                           & 0.000  & 0.928 & 0.784 & 0.770 & \multicolumn{1}{l|}{0.676} & \multicolumn{1}{l|}{0.15}                            & 0.756 & 0.746 & 0.702 & 0.658 & \multicolumn{1}{l|}{0.624} \\
\multicolumn{1}{|l|}{}                    & \multicolumn{1}{l|}{$\lambda_3(w)$} & \multicolumn{1}{l|}{$0.20n$}                           & 0.000  & 0.884 & 0.744 & 0.724 & \multicolumn{1}{l|}{0.648} & \multicolumn{1}{l|}{0.20}                            & 0.774 & 0.724 & 0.716 & 0.702 & \multicolumn{1}{l|}{0.620} \\
\multicolumn{1}{|l|}{}                    & \multicolumn{1}{l|}{}               & \multicolumn{1}{l|}{$0.25n$}                           & 0.000  & 0.744 & 0.706 & 0.698 & \multicolumn{1}{l|}{0.618} & \multicolumn{1}{l|}{0.25}                            & 0.804 & 0.762 & 0.752 & 0.676 & \multicolumn{1}{l|}{0.598} \\
\multicolumn{1}{|l|}{}                    & \multicolumn{1}{l|}{}               & \multicolumn{1}{l|}{$0.30n$}                           & 0.158  & 0.714 & 0.728 & 0.624 & \multicolumn{1}{l|}{0.606} & \multicolumn{1}{l|}{0.30}                            & 0.800 & 0.774 & 0.750 & 0.678 & \multicolumn{1}{l|}{0.576} \\ \hline
\multicolumn{1}{|l|}{}                    & \multicolumn{1}{l|}{}               & \multicolumn{1}{l|}{$0.10n$}                           & 0.570  & 0.754 & 0.906 & 0.928 & \multicolumn{1}{l|}{0.962} & \multicolumn{1}{l|}{0.10}                            & 0.018 & 0.352 & 0.678 & 0.766 & \multicolumn{1}{l|}{0.878} \\
\multicolumn{1}{|l|}{}                    & \multicolumn{1}{l|}{}               & \multicolumn{1}{l|}{$0.15n$}                           & 0.806  & 0.916 & 0.954 & 0.984 & \multicolumn{1}{l|}{0.996} & \multicolumn{1}{l|}{0.15}                            & 0.252 & 0.764 & 0.862 & 0.932 & \multicolumn{1}{l|}{0.974} \\
\multicolumn{1}{|l|}{$\nu_n = n^{-0.25}$} & \multicolumn{1}{l|}{$\lambda_1(w)$} & \multicolumn{1}{l|}{$0.20n$}                           & 0.868  & 0.948 & 0.984 & 0.998 & \multicolumn{1}{l|}{0.994} & \multicolumn{1}{l|}{0.20}                            & 0.558 & 0.860 & 0.960 & 0.976 & \multicolumn{1}{l|}{0.988} \\
\multicolumn{1}{|l|}{}                    & \multicolumn{1}{l|}{}               & \multicolumn{1}{l|}{$0.25n$}                           & 0.954  & 0.974 & 0.980 & 0.982 & \multicolumn{1}{l|}{0.980} & \multicolumn{1}{l|}{0.25}                            & 0.746 & 0.920 & 0.972 & 0.978 & \multicolumn{1}{l|}{0.986} \\
\multicolumn{1}{|l|}{}                    & \multicolumn{1}{l|}{}               & \multicolumn{1}{l|}{$0.30n$}                           & 0.962  & 0.980 & 0.984 & 0.976 & \multicolumn{1}{l|}{0.946} & \multicolumn{1}{l|}{0.30}                            & 0.844 & 0.970 & 0.982 & 0.988 & \multicolumn{1}{l|}{0.978} \\ \cline{2-14} 
\multicolumn{1}{|l|}{}                    & \multicolumn{1}{l|}{}               & \multicolumn{1}{l|}{$0.10n$}                           & 0.444  & 0.546 & 0.638 & 0.724 & \multicolumn{1}{l|}{0.752} & \multicolumn{1}{l|}{0.10}                            & 0.004 & 0.112 & 0.278 & 0.404 & \multicolumn{1}{l|}{0.492} \\
\multicolumn{1}{|l|}{}                    & \multicolumn{1}{l|}{}               & \multicolumn{1}{l|}{$0.15n$}                           & 0.642  & 0.832 & 0.828 & 0.884 & \multicolumn{1}{l|}{0.912} & \multicolumn{1}{l|}{0.15}                            & 0.120 & 0.486 & 0.632 & 0.702 & \multicolumn{1}{l|}{0.772} \\
\multicolumn{1}{|l|}{}                    & \multicolumn{1}{l|}{$\lambda_2(w)$} & \multicolumn{1}{l|}{$0.20n$}                           & 0.804  & 0.890 & 0.918 & 0.932 & \multicolumn{1}{l|}{0.960} & \multicolumn{1}{l|}{0.20}                            & 0.410 & 0.728 & 0.776 & 0.842 & \multicolumn{1}{l|}{0.870} \\
\multicolumn{1}{|l|}{}                    & \multicolumn{1}{l|}{}               & \multicolumn{1}{l|}{$0.25n$}                           & 0.904  & 0.938 & 0.948 & 0.960 & \multicolumn{1}{l|}{0.968} & \multicolumn{1}{l|}{0.25}                            & 0.656 & 0.816 & 0.872 & 0.916 & \multicolumn{1}{l|}{0.924} \\
\multicolumn{1}{|l|}{}                    & \multicolumn{1}{l|}{}               & \multicolumn{1}{l|}{$0.30n$}                           & 0.904  & 0.960 & 0.966 & 0.966 & \multicolumn{1}{l|}{0.964} & \multicolumn{1}{l|}{0.30}                            & 0.746 & 0.902 & 0.926 & 0.942 & \multicolumn{1}{l|}{0.950} \\ \cline{2-14} 
\multicolumn{1}{|l|}{}                    & \multicolumn{1}{l|}{}               & \multicolumn{1}{l|}{$0.10n$}                           & 0.168  & 0.104 & 0.134 & 0.106 & \multicolumn{1}{l|}{0.098} & \multicolumn{1}{l|}{0.10}                            & 0.000 & 0.002 & 0.022 & 0.022 & \multicolumn{1}{l|}{0.000} \\
\multicolumn{1}{|l|}{}                    & \multicolumn{1}{l|}{}               & \multicolumn{1}{l|}{$0.15n$}                           & 0.362  & 0.350 & 0.348 & 0.296 & \multicolumn{1}{l|}{0.264} & \multicolumn{1}{l|}{0.15}                            & 0.016 & 0.114 & 0.126 & 0.130 & \multicolumn{1}{l|}{0.100} \\
\multicolumn{1}{|l|}{}                    & \multicolumn{1}{l|}{$\lambda_3(w)$} & \multicolumn{1}{l|}{$0.20n$}                           & 0.524  & 0.532 & 0.502 & 0.456 & \multicolumn{1}{l|}{0.418} & \multicolumn{1}{l|}{0.20}                            & 0.174 & 0.306 & 0.302 & 0.264 & \multicolumn{1}{l|}{0.280} \\
\multicolumn{1}{|l|}{}                    & \multicolumn{1}{l|}{}               & \multicolumn{1}{l|}{$0.25n$}                           & 0.716  & 0.618 & 0.620 & 0.540 & \multicolumn{1}{l|}{0.484} & \multicolumn{1}{l|}{0.25}                            & 0.384 & 0.446 & 0.408 & 0.374 & \multicolumn{1}{l|}{0.346} \\
\multicolumn{1}{|l|}{}                    & \multicolumn{1}{l|}{}               & \multicolumn{1}{l|}{$0.30n$}                           & 0.796  & 0.738 & 0.710 & 0.642 & \multicolumn{1}{l|}{0.570} & \multicolumn{1}{l|}{0.30}                            & 0.536 & 0.564 & 0.560 & 0.484 & \multicolumn{1}{l|}{0.438} \\ \hline
\multicolumn{1}{|l|}{}                    & \multicolumn{1}{l|}{}               & \multicolumn{1}{l|}{$0.10n$}                           & 0.124  & 0.122 & 0.238 & 0.364 & \multicolumn{1}{l|}{0.500} & \multicolumn{1}{l|}{0.10}                            & 0.000 & 0.000 & 0.002 & 0.010 & \multicolumn{1}{l|}{0.046} \\
\multicolumn{1}{|l|}{}                    & \multicolumn{1}{l|}{}               & \multicolumn{1}{l|}{$0.15n$}                           & 0.370  & 0.564 & 0.746 & 0.846 & \multicolumn{1}{l|}{0.874} & \multicolumn{1}{l|}{0.15}                            & 0.000 & 0.044 & 0.176 & 0.328 & \multicolumn{1}{l|}{0.450} \\
\multicolumn{1}{|l|}{$\nu_n = n^{-0.33}$} & \multicolumn{1}{l|}{$\lambda_1(w)$} & \multicolumn{1}{l|}{$0.20n$}                           & 0.686  & 0.850 & 0.908 & 0.956 & \multicolumn{1}{l|}{0.970} & \multicolumn{1}{l|}{0.20}                            & 0.030 & 0.324 & 0.534 & 0.696 & \multicolumn{1}{l|}{0.786} \\
\multicolumn{1}{|l|}{}                    & \multicolumn{1}{l|}{}               & \multicolumn{1}{l|}{$0.25n$}                           & 0.822  & 0.940 & 0.970 & 0.984 & \multicolumn{1}{l|}{0.982} & \multicolumn{1}{l|}{0.25}                            & 0.164 & 0.624 & 0.786 & 0.842 & \multicolumn{1}{l|}{0.904} \\
\multicolumn{1}{|l|}{}                    & \multicolumn{1}{l|}{}               & \multicolumn{1}{l|}{$0.30n$}                           & 0.916  & 0.958 & 0.996 & 0.986 & \multicolumn{1}{l|}{0.978} & \multicolumn{1}{l|}{0.30}                            & 0.432 & 0.780 & 0.908 & 0.950 & \multicolumn{1}{l|}{0.966} \\ \cline{2-14} 
\multicolumn{1}{|l|}{}                    & \multicolumn{1}{l|}{}               & \multicolumn{1}{l|}{$0.10n$}                           & 0.058  & 0.010 & 0.020 & 0.024 & \multicolumn{1}{l|}{0.004} & \multicolumn{1}{l|}{0.10}                            & 0.000 & 0.000 & 0.000 & 0.000 & \multicolumn{1}{l|}{0.000} \\
\multicolumn{1}{|l|}{}                    & \multicolumn{1}{l|}{}               & \multicolumn{1}{l|}{$0.15n$}                           & 0.150  & 0.178 & 0.270 & 0.324 & \multicolumn{1}{l|}{0.466} & \multicolumn{1}{l|}{0.15}                            & 0.000 & 0.002 & 0.008 & 0.034 & \multicolumn{1}{l|}{0.046} \\
\multicolumn{1}{|l|}{}                    & \multicolumn{1}{l|}{$\lambda_2(w)$} & \multicolumn{1}{l|}{$0.20n$}                           & 0.408  & 0.512 & 0.678 & 0.736 & \multicolumn{1}{l|}{0.740} & \multicolumn{1}{l|}{0.20}                            & 0.000 & 0.068 & 0.174 & 0.254 & \multicolumn{1}{l|}{0.304} \\
\multicolumn{1}{|l|}{}                    & \multicolumn{1}{l|}{}               & \multicolumn{1}{l|}{$0.25n$}                           & 0.668  & 0.814 & 0.848 & 0.860 & \multicolumn{1}{l|}{0.882} & \multicolumn{1}{l|}{0.25}                            & 0.026 & 0.316 & 0.436 & 0.554 & \multicolumn{1}{l|}{0.594} \\
\multicolumn{1}{|l|}{}                    & \multicolumn{1}{l|}{}               & \multicolumn{1}{l|}{$0.30n$}                           & 0.816  & 0.932 & 0.940 & 0.960 & \multicolumn{1}{l|}{0.942} & \multicolumn{1}{l|}{0.30}                            & 0.214 & 0.572 & 0.682 & 0.776 & \multicolumn{1}{l|}{0.788} \\ \cline{2-14} 
\multicolumn{1}{|l|}{}                    & \multicolumn{1}{l|}{}               & \multicolumn{1}{l|}{$0.10n$}                           & 0.026  & 0.000 & 0.000 & 0.000 & \multicolumn{1}{l|}{0.000} & \multicolumn{1}{l|}{0.10}                            & 0.000 & 0.000 & 0.000 & 0.000 & \multicolumn{1}{l|}{0.000} \\
\multicolumn{1}{|l|}{}                    & \multicolumn{1}{l|}{}               & \multicolumn{1}{l|}{$0.15n$}                           & 0.038  & 0.002 & 0.004 & 0.006 & \multicolumn{1}{l|}{0.006} & \multicolumn{1}{l|}{0.15}                            & 0.000 & 0.000 & 0.000 & 0.000 & \multicolumn{1}{l|}{0.000} \\
\multicolumn{1}{|l|}{}                    & \multicolumn{1}{l|}{$\lambda_3(w)$} & \multicolumn{1}{l|}{$0.20n$}                           & 0.152  & 0.100 & 0.104 & 0.094 & \multicolumn{1}{l|}{0.084} & \multicolumn{1}{l|}{0.20}                            & 0.000 & 0.000 & 0.000 & 0.006 & \multicolumn{1}{l|}{0.002} \\
\multicolumn{1}{|l|}{}                    & \multicolumn{1}{l|}{}               & \multicolumn{1}{l|}{$0.25n$}                           & 0.412  & 0.330 & 0.340 & 0.274 & \multicolumn{1}{l|}{0.252} & \multicolumn{1}{l|}{0.25}                            & 0.004 & 0.060 & 0.054 & 0.046 & \multicolumn{1}{l|}{0.058} \\
\multicolumn{1}{|l|}{}                    & \multicolumn{1}{l|}{}               & \multicolumn{1}{l|}{$0.30n$}                           & 0.566  & 0.634 & 0.494 & 0.512 & \multicolumn{1}{l|}{0.440} & \multicolumn{1}{l|}{0.30}                            & 0.064 & 0.232 & 0.154 & 0.200 & \multicolumn{1}{l|}{0.146} \\ \hline
\multicolumn{1}{|l|}{}                    & \multicolumn{1}{l|}{}               & \multicolumn{1}{l|}{$0.10n$}                           & 0.000  & 0.000 & 0.000 & 0.000 & \multicolumn{1}{l|}{0.000} & \multicolumn{1}{l|}{0.10}                            & 0.000 & 0.000 & 0.000 & 0.000 & \multicolumn{1}{l|}{0.000} \\
\multicolumn{1}{|l|}{}                    & \multicolumn{1}{l|}{}               & \multicolumn{1}{l|}{$0.15n$}                           & 0.020  & 0.000 & 0.000 & 0.000 & \multicolumn{1}{l|}{0.000} & \multicolumn{1}{l|}{0.15}                            & 0.000 & 0.000 & 0.000 & 0.000 & \multicolumn{1}{l|}{0.000} \\
\multicolumn{1}{|l|}{$\nu_n = n^{-0.45}$} & \multicolumn{1}{l|}{$\lambda_1(w)$} & \multicolumn{1}{l|}{$0.20n$}                           & 0.094  & 0.012 & 0.010 & 0.020 & \multicolumn{1}{l|}{0.040} & \multicolumn{1}{l|}{0.20}                            & 0.000 & 0.000 & 0.000 & 0.000 & \multicolumn{1}{l|}{0.000} \\
\multicolumn{1}{|l|}{}                    & \multicolumn{1}{l|}{}               & \multicolumn{1}{l|}{$0.25n$}                           & 0.282  & 0.214 & 0.284 & 0.394 & \multicolumn{1}{l|}{0.508} & \multicolumn{1}{l|}{0.25}                            & 0.000 & 0.000 & 0.000 & 0.000 & \multicolumn{1}{l|}{0.000} \\
\multicolumn{1}{|l|}{}                    & \multicolumn{1}{l|}{}               & \multicolumn{1}{l|}{$0.30n$}                           & 0.588  & 0.618 & 0.776 & 0.812 & \multicolumn{1}{l|}{0.866} & \multicolumn{1}{l|}{0.30}                            & 0.000 & 0.000 & 0.002 & 0.008 & \multicolumn{1}{l|}{0.020} \\ \cline{2-14} 
\multicolumn{1}{|l|}{}                    & \multicolumn{1}{l|}{}               & \multicolumn{1}{l|}{$0.10n$}                           & 0.986  & 0.000 & 0.000 & 0.000 & \multicolumn{1}{l|}{0.000} & \multicolumn{1}{l|}{0.10}                            & 0.830 & 0.000 & 0.000 & 0.000 & \multicolumn{1}{l|}{0.000} \\
\multicolumn{1}{|l|}{}                    & \multicolumn{1}{l|}{}               & \multicolumn{1}{l|}{$0.15n$}                           & 0.004  & 0.000 & 0.000 & 0.000 & \multicolumn{1}{l|}{0.000} & \multicolumn{1}{l|}{0.15}                            & 0.000 & 0.000 & 0.000 & 0.000 & \multicolumn{1}{l|}{0.000} \\
\multicolumn{1}{|l|}{}                    & \multicolumn{1}{l|}{$\lambda_2(w)$} & \multicolumn{1}{l|}{$0.20n$}                           & 0.028  & 0.000 & 0.000 & 0.000 & \multicolumn{1}{l|}{0.000} & \multicolumn{1}{l|}{0.20}                            & 0.000 & 0.000 & 0.000 & 0.000 & \multicolumn{1}{l|}{0.000} \\
\multicolumn{1}{|l|}{}                    & \multicolumn{1}{l|}{}               & \multicolumn{1}{l|}{$0.25n$}                           & 0.116  & 0.010 & 0.018 & 0.014 & \multicolumn{1}{l|}{0.036} & \multicolumn{1}{l|}{0.25}                            & 0.000 & 0.000 & 0.000 & 0.000 & \multicolumn{1}{l|}{0.000} \\
\multicolumn{1}{|l|}{}                    & \multicolumn{1}{l|}{}               & \multicolumn{1}{l|}{$0.30n$}                           & 0.412  & 0.212 & 0.282 & 0.366 & \multicolumn{1}{l|}{0.424} & \multicolumn{1}{l|}{0.30}                            & 0.000 & 0.000 & 0.000 & 0.000 & \multicolumn{1}{l|}{0.000} \\ \cline{2-14} 
\multicolumn{1}{|l|}{}                    & \multicolumn{1}{l|}{}               & \multicolumn{1}{l|}{$0.10n$}                           & 1.000  & 0.000 & 0.000 & 0.000 & \multicolumn{1}{l|}{0.000} & \multicolumn{1}{l|}{0.10}                            & 1.000 & 0.000 & 0.000 & 0.000 & \multicolumn{1}{l|}{0.000} \\
\multicolumn{1}{|l|}{}                    & \multicolumn{1}{l|}{}               & \multicolumn{1}{l|}{$0.15n$}                           & 0.018  & 0.000 & 0.000 & 0.000 & \multicolumn{1}{l|}{0.000} & \multicolumn{1}{l|}{0.15}                            & 0.000 & 0.000 & 0.000 & 0.000 & \multicolumn{1}{l|}{0.000} \\
\multicolumn{1}{|l|}{}                    & \multicolumn{1}{l|}{$\lambda_3(w)$} & \multicolumn{1}{l|}{$0.20n$}                           & 0.020  & 0.000 & 0.000 & 0.000 & \multicolumn{1}{l|}{0.000} & \multicolumn{1}{l|}{0.20}                            & 0.000 & 0.000 & 0.000 & 0.000 & \multicolumn{1}{l|}{0.000} \\
\multicolumn{1}{|l|}{}                    & \multicolumn{1}{l|}{}               & \multicolumn{1}{l|}{$0.25n$}                           & 0.098  & 0.000 & 0.000 & 0.000 & \multicolumn{1}{l|}{0.000} & \multicolumn{1}{l|}{0.25}                            & 0.000 & 0.000 & 0.000 & 0.000 & \multicolumn{1}{l|}{0.000} \\
\multicolumn{1}{|l|}{}                    & \multicolumn{1}{l|}{}               & \multicolumn{1}{l|}{$0.30n$}                           & 0.248  & 0.008 & 0.002 & 0.004 & \multicolumn{1}{l|}{0.004} & \multicolumn{1}{l|}{0.30}                            & 0.000 & 0.000 & 0.000 & 0.000 & \multicolumn{1}{l|}{0.000} \\ \hline
\end{tabular}}
\caption{\label{table:glsmall} Simulation results for Gaussian latent space model}
\end{table}

\end{document}